\newtheorem{theorem}{Theorem}[section]
\newtheorem{proposition}[theorem]{Proposition}
\newtheorem{definition}[theorem]{Definition}
\newtheorem{lemma}[theorem]{Lemma}
\newtheorem{corollary}[theorem]{Corollary}
\newtheorem{prop-def}{Proposition-Definition}[section]
\newtheorem{coro-def}{Corollary-Definition}[section]
\newtheorem{remark}[theorem]{Remark}
\newtheorem{example}{Example}[section]
\newcommand{\nc}{\newcommand}
\nc{\tred}[1]{\textcolor{red}{#1}}
\nc{\tblue}[1]{\textcolor{blue}{#1}}
\nc{\tgreen}[1]{\textcolor{green}{#1}}
\nc{\tpurple}[1]{\textcolor{purple}{#1}}
\nc{\btred}[1]{\textcolor{red}{\bf #1}}
\nc{\btblue}[1]{\textcolor{blue}{\bf #1}}
\nc{\btgreen}[1]{\textcolor{green}{\bf #1}}
\nc{\btpurple}[1]{\textcolor{purple}{\bf #1}}
\renewcommand{\Bbb}{\mathbb}
\newcommand{\efootnote}[1]{}
\renewcommand{\textbf}[1]{}
\newcommand{\delete}[1]{}
\nc{\dfootnote}[1]{{}}          
\nc{\ffootnote}[1]{\dfootnote{#1}}
\nc{\mfootnote}[1]{\footnote{#1}} 
\nc{\ofootnote}[1]{\footnote{\tiny Older version: #1}}
\nc{\mlabel}[1]{\label{#1}}  
\nc{\mcite}[1]{\cite{#1}}  
\nc{\mref}[1]{\ref{#1}}  
\nc{\mbibitem}[1]{\bibitem{#1}} 
\nc{\mlabel}[1]{\label{#1}  
{\hfill \hspace{1cm}{\bf{{\ }\hfill(#1)}}}}
\nc{\mcite}[1]{\cite{#1}{{\bf{{\ }(#1)}}}}  
\nc{\mref}[1]{\ref{#1}{{\bf{{\ }(#1)}}}}  
\nc{\mbibitem}[1]{\bibitem[\bf #1]{#1}} 
\nc{\mtail}{\leq_t}
\nc{\mhead}{\leq_h}
\nc{\rk}{\mathrm{rk}}
\nc{\mset}[1]{\tilde{#1}}
\nc{\pa}{\frakL}
\nc{\arr}{\rightarrow}
\nc{\lu}[1]{(#1)}
\nc{\mult}{\mrm{mult}}
\nc{\diff}{\mathrm{Der}}
\nc{\indiff}{\mathrm{InDer}}
\nc{\outdiff}{\mathrm{OutDer}}
\nc{\conmat}{connection matrix\xspace}
\nc{\bounmat}{boundary matrix\xspace}
\nc{\pcyc}{\mathfrak c}
\nc{\calpa}{\calp_A}
\nc{\calpal}{\Gamma_{AL}}
\nc{\calpc}{\calp_L}
\nc{\frakDa}{\frakD_1}
\nc{\frakDal}{\frakD_2}
\nc{\frakDc}{\frakD_L}
\nc{\frakDv}{\frakD_V}
\nc{\frakDp}{\frakD_F}
\nc{\frakBa}{\frakB_1}
\nc{\frakBal}{\frakB_2}
\nc{\frakBc}{\frakB_L}
\nc{\frakBv}{\frakB_V}
\nc{\bin}[2]{ (_{\stackrel{\scs{#1}}{\scs{#2}}})}  
\nc{\binc}[2]{ \left (\!\! \begin{array}{c} \scs{#1}\\
    \scs{#2} \end{array}\!\! \right )}  
\nc{\bincc}[2]{  \left ( {\scs{#1} \atop
    \vspace{-1cm}\scs{#2}} \right )}  
\nc{\bs}{\bar{S}}
\nc{\cosum}{\sqsubset}
\nc{\la}{\longrightarrow}
\nc{\rar}{\rightarrow}
\nc{\dar}{\downarrow}
\nc{\dprod}{**}
\nc{\dap}[1]{\downarrow \rlap{$\scriptstyle{#1}$}}
\nc{\md}{\mathrm{dth}}
\nc{\uap}[1]{\uparrow \rlap{$\scriptstyle{#1}$}}
\nc{\defeq}{\stackrel{\rm def}{=}}
\nc{\disp}[1]{\displaystyle{#1}}
\nc{\dotcup}{\ \displaystyle{\bigcup^\bullet}\ }
\nc{\gzeta}{\bar{\zeta}}
\nc{\hcm}{\ \hat{,}\ }
\nc{\hts}{\hat{\otimes}}
\nc{\barot}{{\otimes}}
\nc{\free}[1]{\bar{#1}}
\nc{\uni}[1]{\tilde{#1}}
\nc{\hcirc}{\hat{\circ}}
\nc{\lleft}{[}
\nc{\lright}{]}
\nc{\lc}{\lfloor}
\nc{\rc}{\rfloor}
\nc{\curlyl}{\left \{ \begin{array}{c} {} \\ {} \end{array}
    \right .  \!\!\!\!\!\!\!}
\nc{\curlyr}{ \!\!\!\!\!\!\!
    \left . \begin{array}{c} {} \\ {} \end{array}
    \right \} }
\nc{\longmid}{\left | \begin{array}{c} {} \\ {} \end{array}
    \right . \!\!\!\!\!\!\!}
\nc{\onetree}{\bullet}
\nc{\ora}[1]{\stackrel{#1}{\rar}}
\nc{\ola}[1]{\stackrel{#1}{\la}}
\nc{\ot}{\otimes}
\nc{\mot}{{{\boxtimes\,}}}
\nc{\otm}{\overline{\boxtimes}}
\nc{\sprod}{\bullet}
\nc{\scs}[1]{\scriptstyle{#1}}
\nc{\mrm}[1]{{\rm #1}}
\nc{\margin}[1]{\marginpar{\rm #1}}   
\nc{\dirlim}{\displaystyle{\lim_{\longrightarrow}}\,}
\nc{\invlim}{\displaystyle{\lim_{\longleftarrow}}\,}
\nc{\mvp}{\vspace{0.3cm}}
\nc{\tk}{^{(k)}}
\nc{\tp}{^\prime}
\nc{\ttp}{^{\prime\prime}}
\nc{\svp}{\vspace{2cm}}
\nc{\vp}{\vspace{8cm}}
\nc{\proofbegin}{\noindent{\bf Proof: }}
\nc{\proofend}{$\blacksquare$ \vspace{0.3cm}}
\nc{\modg}[1]{\!<\!\!{#1}\!\!>}
\nc{\intg}[1]{F_C(#1)}
\nc{\lmodg}{\!<\!\!}
\nc{\rmodg}{\!\!>\!}
\nc{\cpi}{\widehat{\Pi}}
\nc{\sha}{{\mbox{\cyr X}}}  
\nc{\shap}{{\mbox{\cyrs X}}} 
\nc{\shpr}{\diamond}    
\nc{\shp}{\ast}
\nc{\shplus}{\shpr^+}
\nc{\shprc}{\shpr_c}    
\nc{\msh}{\ast}
\nc{\zprod}{m_0}
\nc{\oprod}{m_1}
\nc{\vep}{\varepsilon}
\nc{\labs}{\mid\!}
\nc{\rabs}{\!\mid}
\nc{\mmbox}[1]{\mbox{\ #1\ }}
\nc{\fp}{\mrm{FP}} \nc{\rchar}{\mrm{char}} \nc{\End}{\mrm{End}} \nc{\Fil}{\mrm{Fil}}
\nc{\Mor}{Mor\xspace}
\nc{\gmzvs}{gMZV\xspace}
\nc{\gmzv}{gMZV\xspace}
\nc{\mzv}{MZV\xspace}
\nc{\mzvs}{MZVs\xspace}
\nc{\Hom}{\mrm{Hom}} \nc{\id}{\mrm{id}} \nc{\im}{\mrm{im}}
\nc{\incl}{\mrm{incl}} \nc{\map}{\mrm{Map}} \nc{\mchar}{\rm char}
\nc{\nz}{\rm NZ} \nc{\supp}{\mathrm Supp}
\nc{\Alg}{\mathbf{Alg}}
\nc{\Bax}{\mathbf{Bax}}
\nc{\bff}{\mathbf f}
\nc{\bfk}{{\bf k}}
\nc{\bfone}{{\bf 1}}
\nc{\bfx}{\mathbf x}
\nc{\bfy}{\mathbf y}
\nc{\base}[1]{\bfone^{\otimes ({#1}+1)}} 
\nc{\Cat}{\mathbf{Cat}}
\nc{\detail}{\marginpar{\bf More detail}
    \noindent{\bf Need more detail!}
    \svp}
\nc{\Int}{\mathbf{Int}}
\nc{\Mon}{\mathbf{Mon}}
\nc{\rbtm}{{shuffle }}
\nc{\rbto}{{Rota-Baxter }}
\nc{\remarks}{\noindent{\bf Remarks: }}
\nc{\Rings}{\mathbf{Rings}}
\nc{\Sets}{\mathbf{Sets}}
\nc{\BA}{{\Bbb A}} \nc{\CC}{{\Bbb C}} \nc{\DD}{{\Bbb D}}
\nc{\EE}{{\Bbb E}} \nc{\FF}{{\Bbb F}} \nc{\GG}{{\Bbb G}}
\nc{\HH}{{\Bbb H}} \nc{\LL}{{\Bbb L}} \nc{\NN}{{\Bbb N}}
\nc{\KK}{{\Bbb K}} \nc{\QQ}{{\Bbb Q}} \nc{\RR}{{\Bbb R}}
\nc{\TT}{{\Bbb T}} \nc{\VV}{{\Bbb V}} \nc{\ZZ}{{\Bbb Z}}
\nc{\cala}{{\mathcal A}} \nc{\calc}{{\mathcal C}}
\nc{\cald}{{\mathcal D}} \nc{\cale}{{\mathcal E}}
\nc{\calf}{{\mathcal F}} \nc{\calg}{{\mathcal G}}
\nc{\calh}{{\mathcal H}} \nc{\cali}{{\mathcal I}}
\nc{\call}{{\mathcal L}} \nc{\calm}{{\mathcal M}}
\nc{\caln}{{\mathcal N}} \nc{\calo}{{\mathcal O}}
\nc{\calp}{{\mathcal P}} \nc{\calr}{{\mathcal R}}
\nc{\cals}{{\mathcal S}}
\nc{\calt}{{\mathcal T}} \nc{\calw}{{\mathcal W}}
\nc{\calk}{{\mathcal K}} \nc{\calx}{{\mathcal X}}
\nc{\CA}{\mathcal{A}}
\nc{\fraka}{{\mathfrak a}}
\nc{\frakA}{{\mathfrak A}}
\nc{\frakb}{{\mathfrak b}}
\nc{\frakB}{{\mathfrak B}}
\nc{\frakC}{{\mathfrak C}}
\nc{\frakD}{{\mathfrak D}}
\nc{\frakg}{{\mathfrak g}}
\nc{\frakH}{{\mathfrak H}}
\nc{\frakL}{{\mathfrak L}}
\nc{\frakM}{{\mathfrak M}}
\nc{\bfrakM}{\overline{\frakM}}
\nc{\frakm}{{\mathfrak m}}
\nc{\frakP}{{\mathfrak P}}
\nc{\frakN}{{\mathfrak N}}
\nc{\frakp}{{\mathfrak p}}
\nc{\frakR}{{\mathfrak R}}
\nc{\frakS}{{\mathfrak S}}
\font\cyr=wncyr10
\font\cyrs=wncyr7
\begin{document}

\title[Cluster category]
{On a cluster category of type $D_{\infty}$}

%
%

\author[Yichao Yang]{Yichao Yang}
\address{D\'{e}partement de math\'{e}matiques, Universit\'{e} de Sherbrooke, Sherbrooke, Qu\'{e}bec, Canada, J1K 2R1}
\email{yichao.yang@usherbrooke.ca}


\renewcommand{\thefootnote}{\alph{footnote}}
\setcounter{footnote}{-1} \footnote{}
\renewcommand{\thefootnote}{\alph{footnote}}
\setcounter{footnote}{-1} \footnote{\emph{2010 Mathematics Subject
Classification}: 13F60, 16G20, 16G70, 18E30.}

\renewcommand{\thefootnote}{\alph{footnote}}
\setcounter{footnote}{-1} \footnote{\emph{Keywords}: cluster category, cluster-tilting subcategory, Auslander-Reiten theory.}


\begin{abstract}
We study the canonical orbit category of the bounded derived category of finite dimensional representations of the quiver of type $D_{\infty}$. We prove that this orbit category is a cluster category, that is, its cluster-tilting subcategories form a cluster structure as defined in [\ref{BIRS}]. 
\end{abstract}

\maketitle

\setcounter{section}{0}

\section{Introduction}

Cluster algebras, introduced by Fomin and Zelevinsky in [\ref{FZ1}], were to give a combinatorial characterization of dual canonical basis of the quantized enveloping algebra of a quantum group and total positivity for algebraic groups. Now it arises in connection with many branches of mathematics. One of the links is with the representation theory of algebra, where was first discovered in [\ref{MRZ}]. The notion of cluster categories introduced by Buan, Marsh, Reineke, Reiten and Todorov in [\ref{BMRRT}] were to model the main ingredients in the definition of a cluster algebra in a categorical setting. In its original definition, a cluster category is the orbit category of the bounded derived category of finite dimensional representations of a finite acyclic quiver under the composite of the shift functor and the inverse Auslander-Reiten translation. Later on, Buan, Iyama, Reiten and Scott generalized this definition to the notion of cluster structure in a $2$-Calabi-Yau triangulated category in [\ref{BIRS}], where the cluster-tilting objects were replaced by cluster-tilting subcategories. If the cluster-tilting subcategories in a $2$-Calabi-Yau triangulated category form a cluster structure, then this $2$-Calabi-Yau triangulated category is called a \emph{cluster category}. However, it is not true in general. In [\ref{HJ}], Holm and J\o rgensen studied a cluster category of infinite Dynkin type $A_{\infty}$, which is equivalent to the canonical orbit category of the bounded derived category of finite dimensional representations of a quiver of type $A_{\infty}$ with the zigzag orientation, generalising the $A_{n}$ case. In [\ref{LP2}], Liu and Charles extend this work and prove that this canonical orbit category of a quiver of type $A_{\infty}^{\infty}$ without infinite paths is also a cluster category. They also conjecture that this canonical orbit category is a cluster category whenever the original quiver is locally finite without infinite paths. 

The aim of this paper is to extend the above-mentioned works of Holm and J\o rgensen, Liu and Charles to a quiver of type $D_{\infty}$ without infinite paths. Let $Q$ be the quiver of type $D_{\infty}$ with the zigzag orientation. We prove that the canonical orbit category of the bounded derived category of finite dimensional representations of $Q$ is a cluster category and show that this result holds for any quiver of type $D_{\infty}$ without infinite paths, which give a partial answer of the above conjecture and the question mentioned in [\ref{HJ}, (6.1)]. 

This paper is organized as follows. Section $2$ is devoted to recalling the notions of cluster-tilting subcategory, cluster structure and cluster category. We also give the basic information on the category of finite dimensional $k$-linear representations ${\rm rep}(Q)$, the bounded derived category $D^{b}({\rm rep}(Q))$ and the canonical orbit category $\mathcal{C}(Q)$. In particular, we summarizes all the Auslander-Reiten sequences in the Auslander-Reiten quiver $\Gamma_{{\rm rep}(Q)}$ shown in [\ref{RV}]. Section $3$ describes the morphisms in ${\rm rep}(Q)$ and Section $4$ proves our main result; see $(4.12)$.

\section{Preliminaries}

Throughout this paper, $k$ stands for an algebraically closed field. The standard duality for the category of finite dimensional $k$-vector spaces will be denoted by $D$. Throughout this section, $\mathcal{A}$ denotes a $k$-linear triangulated category whose shift functor is denoted by $[1]$. We assume that $\mathcal{A}$ is Hom-finite and Krull-Schmidt. Let $\mathcal{D}$ be a full subcategory of $\mathcal{A}$. Recall that $\mathcal{D}$ is \emph{covariantly finite} in $\mathcal{A}$ provided that every object $X$ of $\mathcal{A}$ admits a \emph{left $\mathcal{D}$-approximation}, that is, a morphism $f: X\rightarrow M$ in $\mathcal{A}$ such that every morphism $g: X\rightarrow N$ with $N\in \mathcal{D}$ factors through $f$; and \emph{contravariantly finite} in $\mathcal{A}$ provided that every object $X$ of $\mathcal{A}$ admits a \emph{right $\mathcal{D}$-approximation}, that is, a morphism $f: M\rightarrow X$ such that every morphism $g: N\rightarrow X$ with $N\in \mathcal{D}$ factors through $f$; and \emph{functorially finite} in $\mathcal{A}$ if it is both covariantly and contravariantly finite in $\mathcal{A}$. Moreover, $\mathcal{A}$ is called \emph{$2$-Calabi-Yau} provided that there are bifunctorial isomorphisms 
${\rm Ext}_{\mathcal{A}}^{1}(X,Y)\cong D {\rm Ext}_{\mathcal{A}}^{1}(Y,X)$
for $X,Y\in \mathcal{A}$; see, for example, [\ref{BMRRT}].

Let $\mathcal{A}$ be a $2$-Calabi-Yau triangulated $k$-category. We recall the following basic definitions from [\ref{BIRS}, \ref{LP2}]. 

\begin{definition}\label{basic definition}

$(1)$~ A full subcategory $\mathcal{T}$ of $\mathcal{A}$ is called \emph{strictly additive} provided that $\mathcal{T}$ is closed under isomorphisms, finite direct sums, and taking summands.

$(2)$~ A strictly additive subcategory $\mathcal{T}$ of $\mathcal{A}$ is called \emph{weakly cluster-tilting} provided that for any $X\in \mathcal{A}$, ${\rm Ext}_{\mathcal{A}}^{1}(\mathcal{T}, X)=0$ if and only if $X\in \mathcal{T}$.

$(3)$~ A strictly additive subcategory $\mathcal{T}$ of $\mathcal{A}$ is called \emph{cluster-tilting} provided that $\mathcal{T}$ is weakly cluster-tilting and functorially finite in $\mathcal{A}$.
\end{definition}

Let $\mathcal{T}$ be a strictly additive subcategory of $\mathcal{A}$. By definition, the quiver of $\mathcal{T}$, denoted by $Q_{\mathcal{T}}$, is the underlying quiver of its Auslander-Reiten quiver. For each indecomposable object $M$ of $\mathcal{T}$, we denote by $\mathcal{T}_{M}$ the full additive subcategory of $\mathcal{T}$ generated by the indecomposable objects not isomorphic to $M$; see [\ref{LP2}].

For convenience, we reformulate the notion of a cluster structure introduced in [\ref{BIRS}].

\begin{definition}\label{cluster structure}
Let $\mathcal{A}$ be a $2$-Calabi-Yau triangulated $k$-category. A non-empty collection $\mathfrak{C}$ of strictly additive subcategories of $\mathcal{A}$ is called a \emph{cluster structure} provided that, for any $\mathcal{T}\in \mathfrak{C}$ and any indecomposable object $M\in \mathcal{T}$, the following conditions are satisfied.

$(1)$~ The quiver of $\mathcal{T}$ has no loops or $2$-cycles.

$(2)$~ There exists a unique $($up to isomorphism$)$ indecomposable object $M^{*}\in \mathcal{A}$ with $M^{*}\not\cong M$ such that the additive subcategory of $\mathcal{A}$ generated by $\mathcal{T}_{M}$ and $M^{*}$, written as $\mu_{M}(\mathcal{T})$, belongs to $\mathfrak{C}$.

$(3)$~ The quiver of $\mu_{M}(\mathcal{T})$ is obtained from the quiver of $\mathcal{T}$ by the Fomin-Zelevinsky mutation at $M$ in the sense of {\rm [\ref{FZ}]}.

$(4)$~ There exist two exact triangles in $\mathcal{A}$ as follows:
$$\xymatrix{M \ar[r]^f & N \ar[r]^g & M^* \ar[r] & M[1]} ~~{\rm and}~~
\xymatrix{M^* \ar[r]^u & L \ar[r]^v& M \ar[r]& M^*[1],}$$
where $f,u$ are minimal left $\mathcal{T}_{M}$-approximations, and $g,v$ are minimal right $\mathcal{T}_{M}$-approximations in $\mathcal{A}$.
\end{definition}

The following notion is our main objective of study.

\begin{definition}\label{cluster category}
A $2$-Calabi-Yau triangulated $k$-category $\mathcal{A}$ is called a \emph{cluster category} if its cluster-tilting subcategories form a cluster structure.
\end{definition}

For the rest of this paper let $Q$ be the following quiver 
$$\xymatrix{ &2\ar[dl]
\ar[d]\ar[dr] && 4\ar[dl]\ar[dr] && 6\ar[dl]\ar[dr] &&
8\ar[dl]\ar@{.}[dr] &\\ 
0 & 1 & 3 && 5 && 7 && }
$$
of type $D_{\infty}$ with zigzag orientation, whose vertex set is written as $Q_{0}$ and ${\rm rep}(Q)$ be the category of finite dimensional k-linear representations of $Q$, whose Auslander-Reiten translation is written as $\tau_{Q}$. For each $x\in Q_{0}$, let $P_{x}$, $I_{x}$, and $S_{x}$ be the indecomposable projective representation, the indecomposable injective representation, and the simple representation at $x$, respectively. Recall from [\ref{RV}, (III.3)] that the Auslander-Reiten quiver $\Gamma_{{\rm rep}(Q)}$ of ${\rm rep}(Q)$ consists of the following three components, where 
$$ A_{n,m}: V_{i}=k ~~{\rm for}~~ 2\leq n\leq i \leq m.$$
$$ A_{m}^{(1)}: V_{i}=k ~~{\rm for}~~ i\leq m, i\neq 1, V_1=0.$$ 
$$ A_{m}^{(0)}: V_{i}=k ~~{\rm for}~~ i\leq m, i\neq 0, V_0=0.$$
$$B_{n,m}: V_{i}=k^{2} ~~{\rm for}~~ 2\leq i\leq n, V_{i}=k ~~{\rm for}~~ i=0,1 ~~{\rm and}~~ n+1\leq i\leq m.$$
(we always mean that $V_{j}=0$ at the vertices which are not mentioned).

\begin{figure}
$$ \xymatrix{
\ar@{.}[d]&&\ar@{.}[d]&&\ar@{.}[d]&\\       
  A_{5,5}\ar[dr] \ar@{--}[rr] && A_{3,7}\ar[dr] \ar@{--}[rr]&
& B_{1,9}\ar@{--}[r]&\\ 
& A_{3,5} \ar[dr]\ar[ur]\ar@{--}[rr]
&& B_{1,7}\ar[dr]\ar[ur]  \ar@{--}[r]&&\\ 
 A_{3,3}\ar[ur]\ar[dr] \ar@{--}[rr] &&
B_{1,5}\ar[ur]\ar[dr] \ar@{--}[rr]
&&B_{3,7}\ar@{--}[r]&\\ 
& B_{1,3} \ar[dr]\ar[ddr]\ar[ur]\ar@{--}[rr]
&& B_{3,5}\ar[dr]\ar[ddr]\ar[ur]  \ar@{--}[r]&&\\ 
 A^{(0)}_1\ar[ur] \ar@{--}[rr] &&
A^{(1)}_3\ar[ur] \ar@{--}[rr]
&& A^{(0)}_5\ar@{--}[r]&\\ 
 A_1^{(1)}\ar[uur] \ar@{--}[rr] &&
A_3^{(0)}\ar[uur] \ar@{--}[rr]
&& A^{(1)}_5\ar@{--}[r]&\\ 
} $$ \caption{The preprojective component $\mathcal{P}$ of $\Gamma_{{\rm rep}(Q)}$}
\end{figure}
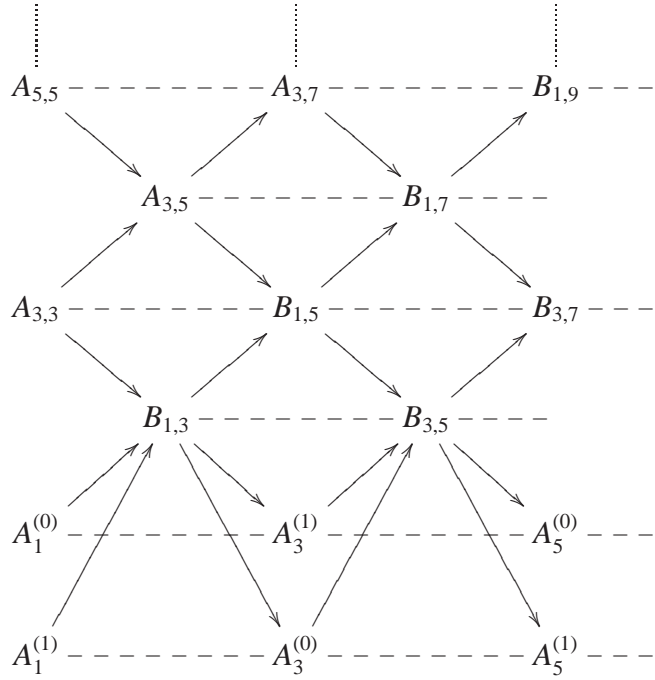
\begin{figure}
$$ \xymatrix{
&\ar@{.}[d]&&\ar@{.}[d]&&\ar@{.}[d]\\       
 \ar@{--}[r] & A_{2,10}\ar[dr] \ar@{--}[rr] && A_{4,8}\ar[dr]
 \ar@{--}[rr] && A_{6,6}\\ 
&\ar@{--}[r] & A_{2,8}\ar[ru]\ar[rd]\ar@{--}[rr]  && A_{4,6} \ar[ru]\ar[rd]%
\\ 
 \ar@{--}[r] & B_{2,8}\ar[dr]\ar[ru] \ar@{--}[rr] && A_{2,6}\ar[dr]\ar[ru]
 \ar@{--}[rr] && A_{4,4}\\ 
&\ar@{--}[r] & B_{2,6}\ar[ru]\ar[rd]\ar@{--}[rr]  && A_{2,4} \ar[ru]\ar[rd]%
\\ 
\ar@{--}[r] & B_{4,6}\ar[dr]\ar[ddr]\ar[ru] \ar@{--}[rr] &&
B_{2,4}\ar[dr]\ar[ddr]\ar[ru]
 \ar@{--}[rr] && A_{2,2}\\ 
&\ar@{--}[r] & A^{(1)}_4\ar[ru]\ar@{--}[rr]  && A_2^{(0)} \ar[ru]%
\\ 
&\ar@{--}[r] & A^{(0)}_4\ar[ruu]\ar@{--}[rr]  && A_2^{(1)} \ar[ruu]%
} $$ \caption{The preinjective component $\mathcal{I}$ of $\Gamma_{{\rm rep}(Q)}$}
\end{figure}
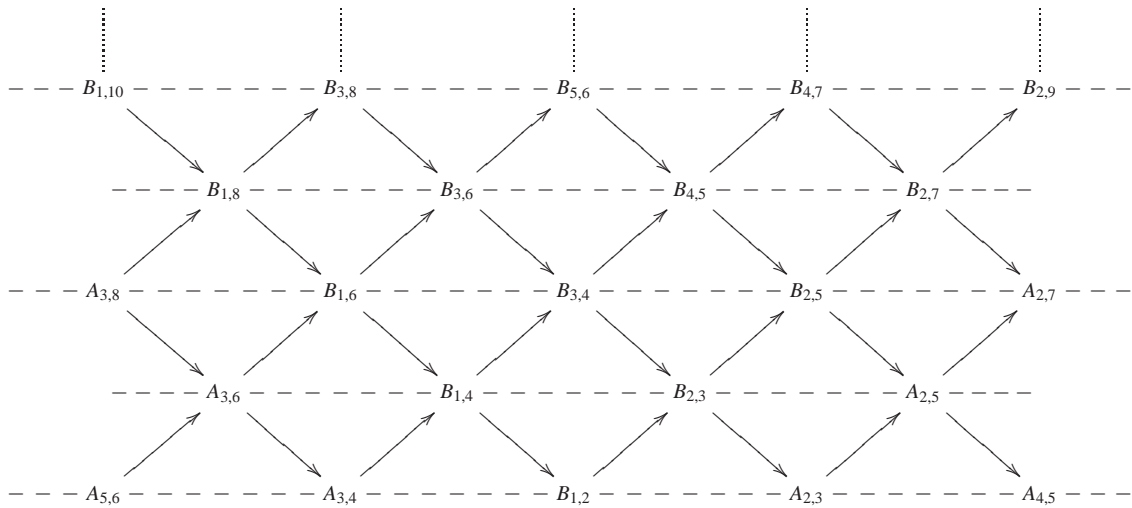
\begin{figure}
\tiny $$ \scriptscriptstyle \strut\hskip -1.5cm\xymatrix{
&\ar@{.}[d]&&\ar@{.}[d]&&\ar@{.}[d]&&\ar@{.}[d]&&\ar@{.}[d]&\\
\ar@{--}[r]& B_{1,10}\ar[dr] \ar@{--}[rr] && \ar[dr]B_{3,8}
\ar@{--}[rr] && \ar[dr] B_{5,6}\ar@{--}[rr] &&
B_{4,7}\ar[dr]\ar@{--}[rr]&& B_{2,9}\ar@{--}[r]&\\
& \ar@{--}[r] &B_{1,8} \ar[ur]\ar[dr] \ar@{--}[rr]&&
\ar[dr]\ar[ur]B_{3,6}\ar@{--}[rr] && \ar[dr]\ar[ur]
B_{4,5}\ar@{--}[rr]&& B_{2,7} \ar@{--}[r]\ar[ur]\ar[dr]&&\\
\ar@{--}[r] & A_{3,8}
 \ar@{--}[rr] \ar[ur]\ar[dr] && \ar[ur]\ar[dr] B_{1,6}\ar@{--}[rr] &&
\ar[ur]\ar[dr]B_{3,4} \ar@{--}[rr]
&& B_{2,5}\ar@{--}[rr]\ar[ur]\ar[dr]&&A_{2,7} \ar@{--}[r]&\\ 
&\ar@{--}[r]& A_{3,6} \ar@{--}[rr]\ar[dr]\ar[ur]&&
\ar[dr]\ar[ur]B_{1,4}\ar@{--}[rr] && \ar[dr]\ar[ur] B_{2,3}
\ar@{--}[rr]&&A_{2,5}\ar[ur]\ar[dr]\ar@{--}[r]&\\
\ar@{--}[r]& \ar@{--}[rr] A_{5,6}\ar[ur]&& A_{3,4}\ar[ur]
\ar@{--}[rr] && \ar[ur]B_{1,2} \ar@{--}[rr]
&& A_{2,3}\ar@{--}[rr]\ar[ur]&& A_{4,5}\ar@{--}[r]& 
} $$ \caption{The regular component $\mathcal{R}$ of $\Gamma_{{\rm rep}(Q)}$}
\end{figure}

\begin{remark}\label{GammaQ}
$(1)$~ All the Auslander-Reiten sequences in the preprojective component $\mathcal{P}$ can be summarized as follows.

$$0\rightarrow A_{i}^{(k)}\rightarrow B_{i,i+2}\rightarrow A_{i+2}^{(1-k)}\rightarrow 0, ~~i\geq 1, ~i~{\rm odd}, ~k\in \{0,1\};$$
$$0\rightarrow B_{i,i+2}\rightarrow A_{i+2}^{(0)}\oplus A_{i+2}^{(1)}\oplus B_{i,i+4}\rightarrow B_{i+2,i+4}\rightarrow 0, ~~i\geq 1, ~i~{\rm odd};$$
$$0\rightarrow A_{3,i}\rightarrow B_{1,i}\oplus A_{3,i+2}\rightarrow B_{1,i+2}\rightarrow 0, ~~i\geq 3, ~i~{\rm odd};$$
$$0\rightarrow A_{i,j}\rightarrow A_{i-2,j}\oplus A_{i,j+2}\rightarrow A_{i-2,j+2}\rightarrow 0, ~~5\leq i\leq j, ~i,j~{\rm odd};$$
$$0\rightarrow B_{i,j}\rightarrow B_{i+2,j}\oplus B_{i,j+2}\rightarrow B_{i+2,j+2}\rightarrow 0, ~~1\leq i<j-2, ~i,j~{\rm odd}.$$

$(2)$~ All the Auslander-Reiten sequences in the preinjective component $\mathcal{I}$ can be summarized as follows.

$$0\rightarrow B_{2,4}\rightarrow A_{2}^{(0)}\oplus A_{2}^{(1)}\oplus A_{2,4}\rightarrow A_{2,2}\rightarrow 0;$$
$$0\rightarrow A_{i+2}^{(k)}\rightarrow B_{i,i+2}\rightarrow A_{i}^{(1-k)}\rightarrow 0, ~~i\geq 2, ~i~{\rm even}, ~k\in \{0,1\};$$
$$0\rightarrow B_{i+2,i+4}\rightarrow A_{i+2}^{(0)}\oplus A_{i+2}^{(1)}\oplus B_{i,i+4}\rightarrow B_{i,i+2}\rightarrow 0, ~~i\geq 4, ~i~{\rm even};$$
$$0\rightarrow B_{2,i+2}\rightarrow B_{2,i}\oplus A_{2,i+2}\rightarrow A_{2,i}\rightarrow 0, ~~i\geq 4, ~i~{\rm even};$$
$$0\rightarrow A_{i-2,j+2}\rightarrow A_{i-2,j}\oplus A_{i,j+2}\rightarrow A_{i,j}\rightarrow 0, ~~4\leq i\leq j, ~i,j~{\rm even};$$
$$0\rightarrow B_{i+2,j+2}\rightarrow B_{i+2,j}\oplus B_{i,j+2}\rightarrow B_{i,j}\rightarrow 0, ~~2\leq i<j-2, ~i,j~{\rm even}.$$

$(3)$~ All the Auslander-Reiten sequences in the regular component $\mathcal{R}$ can be summarized as follows.

$$0\rightarrow A_{3,4}\rightarrow B_{1,4}\rightarrow B_{1,2}\rightarrow 0;$$
$$0\rightarrow B_{1,2}\rightarrow B_{2,3}\rightarrow A_{2,3}\rightarrow 0;$$
$$0\rightarrow B_{i,i+3}\rightarrow B_{i+2,i+3}\oplus B_{i,i+1}\rightarrow B_{i+1,i+2}\rightarrow 0, ~~i\geq 1, ~i~{\rm odd};$$
$$0\rightarrow B_{i+1,i+2}\rightarrow B_{i+2,i+3}\oplus B_{i,i+1}\rightarrow B_{i,i+3}\rightarrow 0, ~~i\geq 2, ~i~{\rm even};$$
$$0\rightarrow A_{i+2,i+3}\rightarrow A_{i,i+3}\rightarrow A_{i,i+1}\rightarrow 0, ~~i\geq 3, ~i~{\rm odd};$$
$$0\rightarrow A_{i,i+1}\rightarrow A_{i,i+3}\rightarrow A_{i+2,i+3}\rightarrow 0, ~~i\geq 2, ~i~{\rm even};$$
$$0\rightarrow A_{i+2,j+2}\rightarrow A_{i,j+2}\oplus A_{i+2,j}\rightarrow A_{i,j}\rightarrow 0, ~~3\leq i\leq j-3, ~i~{\rm odd},~j~{\rm even};$$
$$0\rightarrow A_{i,j}\rightarrow A_{i,j+2}\oplus A_{i+2,j}\rightarrow A_{i+2,j+2}\rightarrow 0, ~~2\leq i\leq j-3, ~i~{\rm even}, ~j~{\rm odd};$$
$$0\rightarrow A_{3,j+2}\rightarrow B_{1,j+2}\oplus A_{3,j}\rightarrow B_{1,j}\rightarrow 0, ~~j\geq 4, ~j~{\rm even};$$
$$0\rightarrow B_{2,j}\rightarrow B_{2,j+2}\oplus A_{2,j}\rightarrow A_{2,j+2}\rightarrow 0, ~~j\geq 3, ~j~{\rm odd};$$
$$0\rightarrow B_{i-2,j+2}\rightarrow B_{i,j+2}\oplus B_{i-2,j}\rightarrow B_{i,j}\rightarrow 0, ~~3\leq i<j, ~i~{\rm odd}, ~j~{\rm even};$$
$$0\rightarrow B_{i,j}\rightarrow B_{i,j+2}\oplus B_{i-2,j}\rightarrow B_{i-2,j+2}\rightarrow 0, ~~4\leq i<j, ~i~{\rm even}, ~j~{\rm odd}.$$

$(4)$~ It is not hard to see that the sequences in $(1),(2),(3)$ above are Auslander-Reiten sequences by considering a large enough subcategory of representations of a quiver of type $D_{n}$.

We also see that the preprojective component $\mathcal{P}$ contains the $A_{n,m}, A_{m}^{(0)}, A_{m}^{(1)}$ and $B_{n,m}$ with $n,m$ odd. The preinjective component $\mathcal{I}$ contains the $A_{n,m}, A_{m}^{(0)}, A_{m}^{(1)}$ and $B_{n,m}$ with $n,m$ even. The regular component $\mathcal{R}$ contains the $A_{n,m}$ and $B_{n,m}$ with $n+m$ odd.
\end{remark}

Let $\mathcal{C}$ be a Hom-finite Krull-Schmidt additive $k$-category and $\Gamma_{\mathcal{C}}$ be its Auslander-Reiten quiver. Then a connected component $\Gamma$ of $\Gamma_{\mathcal{C}}$ is called \emph{standard} if the mesh category $k(\Gamma)$ is equivalent to the full subcategory $\mathcal{C}(\Gamma)$ of $\mathcal{C}$ generated by the objects lying in $\Gamma$; see [\ref{LP1}]. It is well known that all the preprojective component $\mathcal{P}$, the preinjective component $\mathcal{I}$ and the regular component $\mathcal{R}$ are standard with ${\rm Hom}_{{\rm rep}(Q)}(\mathcal{I}, \mathcal{P})=0, {\rm Hom}_{{\rm rep}(Q)}(\mathcal{I}, \mathcal{R})=0$ and ${\rm Hom}_{{\rm rep}(Q)}(\mathcal{R}, \mathcal{P})=0$; see, for example, [\ref{BLP}, \ref{LP2}],

Furthermore, It is also known that the bounded derived category $D^{b}({\rm rep}(Q))$ of ${\rm rep}(Q)$ is a Hom-finite Krull-Schmidt triangulated $k$-category having almost split triangles, whose Auslander-Reiten translation is written as $\tau_{D}$; see [\ref{LP2}]. Setting $F=\tau_{D}^{-1}\circ [1]$, one obtains the canonical orbit category $\mathcal{C}(Q)=D^{b}({\rm rep}(Q))/F.$

Recall that the objects of $\mathcal{C}(Q)$ are those of $D^{b}({\rm rep}(Q))$; and for any objects $X,Y$, the morphisms are given by 
$${\rm Hom}_{\mathcal{C}(Q)}(X, Y)=\bigoplus\limits_{i\in \mathbb{Z}} {\rm Hom}_{D^{b}({\rm rep}(Q))}(X, F^{i}Y).$$
The composition of morphisms is given by 
$$(g_{i})_{i\in \mathbb{Z}}\circ (f_{i})_{i\in \mathbb{Z}}=(h_{i})_{i\in \mathbb{Z}},$$
where $h_{i}=\sum\limits_{p+q=i} F^{p}(g_{q})\circ f_{p}$. 

\begin{remark}\label{CQ}
$(1)$~ The canonical orbit category $\mathcal{C}(Q)$ is a Hom-finite Krull-Schmidt 2-Calabi-Yau triangulated $k$-category and there exists a canonical projection functor $\pi: D^{b}({\rm rep}(Q))\rightarrow \mathcal{C}(Q)$ which acts identically on the objects, and sends a morphism $f: X\rightarrow Y$ in ${\rm Hom}_{D^{b}({\rm rep}(Q))}(X, Y)$ to 
$$(f_{i})_{i\in \mathbb{Z}}\in \bigoplus\limits_{i\in \mathbb{Z}}{\rm Hom}_{D^{b}({\rm rep}(Q))}(X, F^{i}Y)={\rm Hom}_{\mathcal{C}(Q)}(X, Y),$$
where $f_{i}=f$ if $i=0$; and otherwise $f_{i}=0$. In particular, $\pi$ is a triangle functor and faithful; see {\rm [\ref{K}, \ref{LP2}]}.

$(2)$~ The Auslander-Reiten quiver of $\mathcal{C}(Q)$ is written as $\Gamma_{\mathcal{C}(Q)}$ and the Auslander-Reiten translation is written as $\tau_{\mathcal{C}}$. The objects of $D^{b}({\rm rep}(Q))$ lying in the connecting component $\mathcal{P}\sqcup \mathcal{I}[-1]$ of $\Gamma_{D^{b}({\rm rep}(Q))}$ or a regular component $\mathcal{R}$ of $\Gamma_{{\rm rep}(Q)}$ form a \emph{fundamental domain} of $\mathcal{C}(Q)$, denoted by $\mathcal{F}(Q)$, that is, every indecomposable object in $\mathcal{C}(Q)$ is isomorphic to a unique object in $\mathcal{F}(Q)$.
\end{remark}

For convenience, we state the following well known result.

\begin{lemma}\label{uf}
Let $X,Y$ be two representations lying in $\Gamma_{{\rm rep}(Q)}$.

$(1)$~ ${\rm Hom}_{{\rm rep}(Q)}(X,Y)\cong {\rm Hom}_{{\rm rep}(Q)}(\tau_{Q}^{-1}X, \tau_{Q}^{-1}Y).$

$(2)$~ ${\rm Hom}_{\mathcal{C}(Q)}(X,Y)\cong {\rm Hom}_{D^{b}({\rm rep}(Q))}(X,Y)\oplus D {\rm Hom}_{D^{b}({\rm rep}(Q))}(Y, \tau_{D}^{2}X).$

$(3)$~ Assume that $X\in \mathcal{P}$ and $Y\in \mathcal{R}\cup \mathcal{I}$. Then ${\rm Hom}_{\mathcal{C}(Q)}(X,Y)\cong {\rm Hom}_{{\rm rep}(Q)}(X, Y).$
\end{lemma}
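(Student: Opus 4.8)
The three parts are of increasing depth, so I would treat them in order. For part $(1)$, the point is that $\tau_Q^{-1}$ is not an equivalence on $\mathrm{rep}(Q)$ in general (it kills projectives), but on the full subcategory generated by a fixed Auslander--Reiten component it is. Concretely, I would invoke the fact, recalled just before the statement, that each of $\mathcal{P}$, $\mathcal{I}$, $\mathcal{R}$ is standard, so that $\mathrm{Hom}$ spaces between objects in a component are computed in the mesh category $k(\Gamma)$, where the translation $\tau$ acts as an automorphism of the quiver $\Gamma$ (a stable tube, or a $\mathbb{Z}A_\infty$-type shape with the appropriate boundary) commuting with the mesh relations. Since $X,Y$ lie in $\Gamma_{\mathrm{rep}(Q)}$, which has no projective-injective issues affecting the combinatorics here once we pass to the mesh picture, the map induced by $\tau_Q^{-1}$ is a bijection $\mathrm{Hom}(X,Y)\xrightarrow{\sim}\mathrm{Hom}(\tau_Q^{-1}X,\tau_Q^{-1}Y)$; alternatively one can phrase this via the derived category, where $\tau_D$ is a genuine autoequivalence and the claim is immediate, then restrict back to $\mathrm{rep}(Q)$ using that $\tau_Q$ and $\tau_D$ agree on non-projective representations with non-injective image.

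For part $(2)$, I would start from the definition $\mathrm{Hom}_{\mathcal{C}(Q)}(X,Y)=\bigoplus_{i\in\mathbb{Z}}\mathrm{Hom}_{D^b}(X,F^iY)$ with $F=\tau_D^{-1}\circ[1]$, and argue that all but the summands $i=0$ and $i=1$ vanish. Since $X,Y$ sit in the fundamental domain $\mathcal{F}(Q)$ (the connecting component $\mathcal{P}\sqcup\mathcal{I}[-1]$ together with a regular component), applying $F^i=\tau_D^{-i}[i]$ moves $Y$ far away in $\Gamma_{D^b(\mathrm{rep}(Q))}$; using Hom-finiteness of $D^b(\mathrm{rep}(Q))$ together with the explicit description of its components (finitely many non-zero Hom-dimensions between objects of $\mathrm{rep}(Q)$ and a given shift) one gets that $\mathrm{Hom}_{D^b}(X,F^iY)=0$ for $i\neq 0,1$. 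The $i=0$ term is $\mathrm{Hom}_{D^b}(X,Y)$ directly. For the $i=1$ term, $\mathrm{Hom}_{D^b}(X,\tau_D^{-1}Y[1])$, I would apply the $2$-Calabi--Yau (Serre duality) property of $D^b(\mathrm{rep}(Q))$ in the form $\mathrm{Hom}_{D^b}(A,B)\cong D\,\mathrm{Hom}_{D^b}(B,\tau_D A)$ with $A=X$, $B=\tau_D^{-1}Y[1]$: this yields $D\,\mathrm{Hom}_{D^b}(\tau_D^{-1}Y[1],\tau_D X)\cong D\,\mathrm{Hom}_{D^b}(Y,\tau_D^2 X[-1])\cong D\,\mathrm{Hom}_{D^b}(Y[1],\tau_D^2 X)$; rewriting gives the stated $D\,\mathrm{Hom}_{D^b(\mathrm{rep}(Q))}(Y,\tau_D^2 X)$ after adjusting the shift, and one checks the shift bookkeeping carefully since this is exactly where sign/degree errors creep in.

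For part $(3)$, the hypothesis $X\in\mathcal{P}$, $Y\in\mathcal{R}\cup\mathcal{I}$ is designed to kill the second summand of part $(2)$. The plan is: first, $\mathrm{Hom}_{D^b}(X,Y)=\mathrm{Hom}_{\mathrm{rep}(Q)}(X,Y)$ because both $X$ and $Y$ lie in $\mathrm{rep}(Q)\subseteq D^b(\mathrm{rep}(Q))$, sitting in degree $0$, and there are no negative or positive $\mathrm{Ext}$'s contributing across components here (indeed $\mathrm{Hom}_{D^b}(X,Y[j])=\mathrm{Ext}^j_{\mathrm{rep}(Q)}(X,Y)$, which for $j\neq 0$ does not enter the $i=0$ summand). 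Second, for the vanishing of $D\,\mathrm{Hom}_{D^b}(Y,\tau_D^2 X)$: since $X\in\mathcal{P}$, $\tau_D^2 X\in\mathcal{P}$ as well (the preprojective component is $\tau_D$-stable in $D^b(\mathrm{rep}(Q))$), while $Y\in\mathcal{R}\cup\mathcal{I}$; then $\mathrm{Hom}_{D^b}(Y,\tau_D^2 X)$ decomposes into $\mathrm{Hom}_{\mathrm{rep}(Q)}$ and $\mathrm{Ext}^1$ terms between objects of the (pre)injective-or-regular world and the preprojective world, all of which vanish by the orthogonality relations $\mathrm{Hom}_{\mathrm{rep}(Q)}(\mathcal{I},\mathcal{P})=\mathrm{Hom}_{\mathrm{rep}(Q)}(\mathcal{I},\mathcal{R})=\mathrm{Hom}_{\mathrm{rep}(Q)}(\mathcal{R},\mathcal{P})=0$ recalled above, together with the corresponding $\mathrm{Ext}^1$ vanishing (obtained from the Hom-vanishing by Auslander--Reiten duality within $\mathrm{rep}(Q)$). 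Combining, $\mathrm{Hom}_{\mathcal{C}(Q)}(X,Y)\cong\mathrm{Hom}_{\mathrm{rep}(Q)}(X,Y)$.

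The main obstacle I anticipate is not any single deep idea but the bookkeeping in part $(2)$: correctly tracking how $F^i=\tau_D^{-i}[i]$ acts on the component structure of $\Gamma_{D^b(\mathrm{rep}(Q))}$ to justify that only $i=0,1$ survive, and then getting the shift exactly right in the Serre-duality manipulation so that the answer comes out as $\tau_D^2 X$ rather than $\tau_D^2 X[\pm 1]$. Part $(1)$ is essentially formal once standardness is invoked, and part $(3)$ reduces cleanly to the stated orthogonality of components plus Auslander--Reiten duality, so those should be short.
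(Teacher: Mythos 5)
Your overall route for part $(3)$ is the same as the paper's (the paper simply cites [LP2] for parts $(1)$ and $(2)$ and proves only $(3)$, using the decomposition of $(2)$ and the orthogonality of components), but there is a genuine gap in your part $(3)$: the claim that $\tau_{D}^{2}X\in\mathcal{P}$ because ``the preprojective component is $\tau_{D}$-stable in $D^{b}({\rm rep}(Q))$'' is false. In the derived category $\tau_{D}P_{x}=I_{x}[-1]$, so if $X$ is projective, or if $\tau_{Q}X$ is projective, then $\tau_{D}^{2}X$ is a representation shifted into negative degree (for instance $\tau_{D}^{2}P_{x}=(\tau_{Q}I_{x})[-1]$); and $X=P_{t}$ is exactly the case the paper uses most. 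The correct argument is the paper's two-case analysis: if $\tau_{D}^{2}X$ is a representation, it is preprojective and ${\rm Hom}_{{\rm rep}(Q)}(\mathcal{R}\cup\mathcal{I},\mathcal{P})=0$ kills the second summand; otherwise $\tau_{D}^{2}X=Z[-i]$ with $i>0$ and ${\rm Hom}_{D^{b}({\rm rep}(Q))}(Y,Z[-i])=0$ since ${\rm rep}(Q)$ is hereditary and there are no negative-degree morphisms between stalk complexes. Your conclusion survives, but only after adding this missing case. Two further inaccuracies in the same paragraph: when $\tau_{D}^{2}X$ is a representation, ${\rm Hom}_{D^{b}({\rm rep}(Q))}(Y,\tau_{D}^{2}X)$ is just ${\rm Hom}_{{\rm rep}(Q)}(Y,\tau_{D}^{2}X)$, so no ${\rm Ext}^{1}$ term enters; and your assertion that ${\rm Ext}^{1}$ from $\mathcal{R}\cup\mathcal{I}$ to $\mathcal{P}$ vanishes ``by Auslander--Reiten duality'' is not correct in general, since AR duality turns it into a Hom-space from $\mathcal{P}$ to $\mathcal{R}\cup\mathcal{I}$, which is typically non-zero --- fortunately that term is never needed.

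In part $(2)$ (which the paper does not prove but quotes from [LP2]) your Serre-duality formula is off by a shift: the Serre functor of $D^{b}({\rm rep}(Q))$ is $\tau_{D}\circ[1]$, so the correct identity is ${\rm Hom}(A,B)\cong D\,{\rm Hom}(B,\tau_{D}A[1])$, not $D\,{\rm Hom}(B,\tau_{D}A)$. With your version you indeed arrive at $D\,{\rm Hom}(Y,\tau_{D}^{2}X[-1])$, and ``adjusting the shift'' is precisely the point at issue, not a cosmetic step; with the correct formula one gets directly ${\rm Hom}(X,\tau_{D}^{-1}Y[1])\cong D\,{\rm Hom}(\tau_{D}^{-1}Y[1],\tau_{D}X[1])\cong D\,{\rm Hom}(Y,\tau_{D}^{2}X)$. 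Your argument that the summands with $i\neq 0,1$ vanish, and your sketch of part $(1)$ via standardness (or via $\tau_{D}$ being an autoequivalence), are fine in outline and go beyond what the paper itself writes down.
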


\begin{proof}
We need only to prove Statement $(3)$, since all other parts are known; see [\ref{LP2}]. Assume that $X\in \mathcal{P}$ and $Y\in \mathcal{R}\cup \mathcal{I}$. Then by (2) we have 
$$\begin{array}{rcl}
{\rm Hom}_{\mathcal{C}(Q)}(X,Y) & \cong & {\rm Hom}_{D^{b}({\rm rep}(Q))}(X,Y)\oplus D {\rm Hom}_{D^{b}({\rm rep}(Q))}(Y, \tau_{D}^{2}X) \\
                                                        & \cong & {\rm Hom}_{{\rm rep}(Q))}(X,Y)\oplus D {\rm Hom}_{D^{b}({\rm rep}(Q))}(Y, \tau_{D}^{2}X).
\end{array}$$

If $\tau_{D}^{2}X$ is also a representation, then $D {\rm Hom}_{D^{b}({\rm rep}(Q))}(Y, \tau_{D}^{2}X)\in D{\rm Hom}_{{\rm rep}(Q)}(\mathcal{R}\cup \mathcal{I}, \mathcal{P})=0$. Otherwise, there exists a representation $Z\in \Gamma_{{\rm rep}(Q)}$ such that $\tau_{D}^{2}X=Z[-i]$, $i>0$. Thus in this case we also have $D {\rm Hom}_{D^{b}({\rm rep}(Q))}(Y, \tau_{D}^{2}X)=D {\rm Hom}_{D^{b}({\rm rep}(Q))}(Y, Z[-i])=0$. The proof of the lemma is completed.
\end{proof}

\section{Morphisms in the category ${\rm rep}(Q)$}

This section provides detailed information on three kinds of morphisms of the category ${\rm rep}(Q)$, that is, the morphisms between two regular representations, the morphisms between two preprojective representations and the morphisms from preprojective representation to regular representation. 

Let us start with the morphisms between two representations lying in the regular component $\mathcal{R}$ of $\Gamma_{{\rm rep}(Q)}$. 

Let $X$ be a representation lying in $\mathcal{R}$. Recall from [\ref{LP2}] that $X$ is called \emph{quasi-simple} if it has only one immediate predecessor in $\mathcal{R}$. Moreover, the \emph{forward rectangle $\mathcal{R}^{X}$} of $X$ is defined to be the full subquiver of $\mathcal{R}$ generated by its successors $Y$ such that, for any path $p: X\rightsquigarrow Y$ and any factorization $p=vu$ with paths $u: X\rightsquigarrow Z$ and $v: Z\rightsquigarrow Y$, either $u$ is sectional, or else, $Z$ has two distinct immediate predecessors. The \emph{backward rectangle $\mathcal{R}_{X}$} of $X$ is defined in a dual manner. Then we have the following result.

\begin{lemma}\label{RtoR}
Let $X, Y$ be two regular representations lying in $\Gamma_{{\rm rep}(Q)}$. Then ${\rm Hom}_{{\rm rep}(Q)}(X,Y)\neq 0$ if and only if $Y\in \mathcal{R}^X$ if and only if $X\in \mathcal{R}_Y$. In this case, moreover, ${\rm dim}_{k}{\rm Hom}_{{\rm rep}(Q)}(X,Y)=1$.
\end{lemma}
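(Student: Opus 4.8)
The plan is to exploit the fact, recalled just before the statement, that the regular component $\mathcal{R}$ is standard: the full subcategory $\mathcal{C}(\mathcal{R})$ of ${\rm rep}(Q)$ generated by the regular representations is equivalent to the mesh category $k(\mathcal{R})$. So I would reduce the entire problem to a purely combinatorial computation of ${\rm Hom}$-spaces in the mesh category of the translation quiver $\mathcal{R}$, where the ${\rm Hom}$-space from $X$ to $Y$ has a basis given by the residue classes of paths from $X$ to $Y$ modulo the mesh ideal. Since $\mathcal{R}$, as displayed in Figure 3, is a stable translation quiver of shape $\mathbb{Z}D_{\infty}$-like (more precisely, after identifying the explicit labels $A_{n,m}$ and $B_{n,m}$ with $n+m$ odd appearing there, it is a $\mathbb{Z}A_\infty^\infty$-type tube-free component), the standard combinatorics of ``forward rectangles'' applies: in a component of this shape, any two parallel paths between the same pair of vertices are congruent modulo the mesh relations, and a nonzero path class from $X$ to $Y$ exists precisely when $Y$ lies in the region swept out forward from $X$ by the mesh completions, which is exactly the definition of $\mathcal{R}^{X}$ given in the text.

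Concretely, the key steps are as follows. First, observe that by standardness ${\rm Hom}_{{\rm rep}(Q)}(X,Y)\cong {\rm Hom}_{k(\mathcal{R})}(X,Y)$, so it suffices to argue in $k(\mathcal{R})$. Second, show the ``only if'' direction: if $Y\notin\mathcal{R}^{X}$, then every path from $X$ to $Y$ in $\mathcal{R}$ must, at some vertex with a unique immediate predecessor, use a non-sectional detour, and a standard argument (pushing the path backwards along a mesh, as in the proof that non-sectional paths through such vertices lie in the mesh ideal — this is the usual ``rectangle'' lemma for $\mathbb{Z}A_\infty$-type components, and here one additionally checks the two extra arrows emanating from the $B_{1,2}$-type vertices where the $D_\infty$ branching occurs) shows every such path is zero in $k(\mathcal{R})$; hence ${\rm Hom}(X,Y)=0$. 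Third, for $Y\in\mathcal{R}^{X}$, exhibit a sectional path $X\rightsquigarrow Y$ (which exists by the very shape of a forward rectangle) whose class is nonzero — nonzero because a sectional path cannot factor through any mesh, as meshes only produce relations among paths that ``turn around'' — giving ${\rm Hom}(X,Y)\neq 0$. Fourth, for the dimension count, show any two paths $X\rightsquigarrow Y$ with $Y\in\mathcal{R}^{X}$ are congruent mod the mesh ideal: induct on the length, using each mesh relation to replace one ``corner'' of a path by the complementary corner, and use that $\mathcal{R}^{X}$ contains no vertex lying on two incomparable branches simultaneously that would produce two genuinely independent classes; this forces ${\rm dim}_{k}{\rm Hom}_{{\rm rep}(Q)}(X,Y)=1$. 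Finally, the equivalence $Y\in\mathcal{R}^{X}\iff X\in\mathcal{R}_{Y}$ is immediate from the dual definitions together with the self-duality of the shape of $\mathcal{R}$ (or simply from the symmetry of the nonvanishing condition ${\rm Hom}(X,Y)\neq 0$ just established).

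The main obstacle I anticipate is the presence of the $D_\infty$-branch vertices in $\mathcal{R}$ — the $B_{1,2}$, $B_{2,3}$, $B_{1,4}$, $B_{3,4}$, $\dots$ area in Figure 3 — where a vertex can have two immediate predecessors or two immediate successors and where the meshes are non-homogeneous (e.g. $0\to A_{3,4}\to B_{1,4}\to B_{1,2}\to 0$ followed by $0\to B_{1,2}\to B_{2,3}\to A_{2,3}\to 0$ have ``width-one'' middle terms, unlike the generic width-two meshes). Near these vertices one must verify by hand, using the explicit Auslander–Reiten sequences listed in Remark~\ref{GammaQ}(3), that the forward-rectangle combinatorics still yields at most a one-dimensional ${\rm Hom}$-space and that no path acquires a second independent class by routing around the branch. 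I would handle this by a short case analysis: split according to whether $X$ and $Y$ both lie in the ``$A$-wing'', both in a ``$B$-wing'', or on opposite sides of the branch, and in each case trace the sectional paths explicitly against the list in Remark~\ref{GammaQ}(3); everything else is the routine $\mathbb{Z}A_\infty$ mesh-category computation. Alternatively — and perhaps more cleanly — one can invoke Remark~\ref{GammaQ}(4): a sufficiently large truncation of $\mathcal{R}$ embeds into the regular (tube) part of ${\rm rep}$ of a quiver of type $D_n$, where the analogous ${\rm Hom}$-dimension-one statement for regular modules is classical, and then pass to the limit.
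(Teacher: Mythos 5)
Your main line of attack is essentially the paper's: the paper simply observes that $\mathcal{R}$ is a standard component of shape $\mathbb{Z}\mathbb{A}_{\infty}$ and quotes [\ref{LP2}, Proposition 1.3], whereas you propose to unfold that citation and redo the mesh-category rectangle computation by hand. That computation, done for a $\mathbb{Z}\mathbb{A}_{\infty}$ component, is correct and gives the lemma. However, you never pin down the one structural fact the whole argument rests on, and what you do say about it is wrong: $\mathcal{R}$ is of shape $\mathbb{Z}\mathbb{A}_{\infty}$, not ``$\mathbb{Z}D_{\infty}$-like'' and not of type $\mathbb{Z}\mathbb{A}_{\infty}^{\infty}$ (the latter has no quasi-simple boundary row at all, while $\mathcal{R}$ visibly has one, namely $\ldots,A_{3,4},B_{1,2},A_{2,3},\ldots$). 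There is no $D_{\infty}$-branching inside $\mathcal{R}$ and there are no ``two extra arrows emanating from the $B_{1,2}$-type vertices'': $B_{1,2}$ has exactly one immediate predecessor ($B_{1,4}$) and one immediate successor ($B_{2,3}$), and the meshes with indecomposable middle term that you single out are nothing more than the ordinary boundary meshes of a $\mathbb{Z}\mathbb{A}_{\infty}$ component at the quasi-simple row. The branching of $D_{\infty}$ lives entirely in $\mathcal{P}$ and $\mathcal{I}$. This is not a cosmetic point: if $\mathcal{R}$ really had $D_{\infty}$-branch vertices, the dimension-one conclusion you assert would be false in general (compare Lemma \ref{PtoP}, where Hom-dimension $2$ does occur in the branched preprojective component), so the ``standard rectangle combinatorics applies'' step needs the correct identification of the shape, which your write-up does not supply.

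Your proposed ``cleaner'' fallback should also be dropped: a Dynkin quiver of type $D_{n}$ is representation-finite and has no regular modules and no tubes, so there is no ``regular (tube) part of ${\rm rep}$ of a quiver of type $D_{n}$'' to truncate into; Remark \ref{GammaQ}$(4)$ only uses finite $D_{n}$ subquivers to verify the listed almost split sequences, not to compute Hom-spaces between regulars. (And in genuine tubes, e.g.\ for affine quivers, the Hom-dimension-one statement fails once the quasi-length exceeds the rank, so that route would not prove what you want even where tubes exist.) With the shape of $\mathcal{R}$ corrected to $\mathbb{Z}\mathbb{A}_{\infty}$, your first argument reduces exactly to the statement the paper imports from [\ref{LP2}, Proposition 1.3], and the extra case analysis around the ``branch'' becomes unnecessary.
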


\begin{proof}
Since the regular component $\mathcal{R}$ of $\Gamma_{{\rm rep}(Q)}$ is standard of shape $\mathbb{Z}\mathbb{A}_{\infty}$, it follows from  [\ref{LP2}, Proposition 1.3] at once. The proof of the lemma is completed.
\end{proof}

Next we consider the morphisms between two representations lying in the preprojective component $\mathcal{P}$ of $\Gamma_{{\rm rep}(Q)}$. 

Recall also from [\ref{Y}] that a representation $X$ lying in $\Gamma_{{\rm rep}(Q)}$ is called \emph{boundary representation} if it has at most one direct predecessor and at most one direct successor in $\Gamma_{{\rm rep}(Q)}$, which is a generalization of the \emph{quasi-simple representation} defined only in the regular component $\mathcal{R}$.

Let $X$ be a representation lying in $\mathcal{P}$. We define the \emph{pseudo forward rectangle $\mathcal{PR}^{X}$} of $X$ to be the full subquiver of $\Gamma_{{\rm rep}(Q)}$ generated by its successors $Y$ such that, for any path $p: X\rightsquigarrow Y$, $p$ does not pass through any boundary representation. The \emph{pseudo backward rectangle $\mathcal{PR}_{X}$} of $X$ is defined in a dual manner. Then we have the following result.

\begin{lemma}\label{PtoP}
Let $X, Y$ be two preprojective representations lying in $\Gamma_{{\rm rep}(Q)}$. Let $X$ lies in the $\tau_{\mathcal{Q}}$-orbit of $P_{t}, t\geq 0$. 

$(1)$~  If $t=0,1$, say $X=A_{l}^{(k)}, l\geq 1, l~{\rm odd}, k\in \{0, 1\}$, then  \[
{\rm dim}_{k}{\rm Hom}_{{\rm rep}(Q)}(X, Y)=\left\{
\begin{array}{ll}
0 &\mbox{$Y\in \{Z~|~Z$ is not a successor of $X\}\cup\{A_{l'}^{(1-k)}~|~ l'>l, l'$~{\rm odd}$\};$}\\
1 &\mbox{$Y\in \{Z~|~Z$ is a successor of $X\}\backslash \{A_{l'}^{(1-k)}~|~ l'>l, l'$~{\rm odd}$\}.$}
\end{array}
\right.
\] 

$(2)$~ If $t\geq 2$, then  \[
{\rm dim}_{k}{\rm Hom}_{{\rm rep}(Q)}(X, Y)=\left\{
\begin{array}{ll}
0 &\mbox{$Y\in \{Z~|~Z$ is not a successor of $X\};$}\\
1 &\mbox{$Y\in \mathcal{PR}_{X} \cup(\{Z~|~Z$ is a successor of $X\}\cap \{A_{l}^{(k)}~|~ l\geq 1, l~{\rm odd}, k\in\{0, 1\}\});$}\\
2 &\mbox{$Y\in \{Z~|~Z$ is a successor of $X\}\backslash (\mathcal{PR}_{X} \cup \{A_{l}^{(k)}~|~ l\geq 1, l~{\rm odd}, k\in \{0, 1\}\}).$}
\end{array}
\right.
\] 
\end{lemma}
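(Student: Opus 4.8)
The plan is to compute $\dim_k \Hom_{{\rm rep}(Q)}(X,Y)$ by exploiting that the preprojective component $\mathcal{P}$ is standard, so that $\Hom_{{\rm rep}(Q)}(X,Y)$ is computed in the mesh category $k(\mathcal{P})$: a basis is given by the classes of paths $X\rightsquigarrow Y$ modulo the mesh relations. Since $\mathcal{P}$ has the shape obtained from $\mathbb{Z}\mathbb{A}_{\infty}$ by folding along the $\tau_Q$-orbits of $P_0$ and $P_1$ (the two "boundary" orbits meeting at the fork of $D_\infty$), the combinatorics is that of the usual $\mathbb{Z}\mathbb{A}_\infty$ situation except near these two orbits. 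I would first recall (or cite [\ref{LP2}, \ref{Y}]) the description of $k(\mathbb{Z}\mathbb{A}_\infty)$: between two vertices there is at most one nonzero path-class, and $\Hom$ is $0$ or $1$ dimensional, the nonzero case being exactly when the target lies in the forward rectangle of the source. The point of the present lemma is to see how the two boundary orbits $\{A_l^{(0)}\}$ and $\{A_l^{(1)}\}$ distort this.

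For part $(1)$, $X=A_l^{(k)}$ lies on one of the two boundary orbits. Here $X$ itself is a boundary representation (it has at most one immediate predecessor and successor), so the local shape around $X$ is literally that of the quasi-simple vertices in $\mathbb{Z}\mathbb{A}_\infty$, and the only subtlety is that a path starting at $X$ may or may not be able to "wrap around" to hit a representation on the \emph{other} boundary orbit $A_{l'}^{(1-k)}$. I would argue, using the explicit Auslander-Reiten sequences listed in Remark \ref{GammaQ}$(1)$ — in particular $0\to A_i^{(k)}\to B_{i,i+2}\to A_{i+2}^{(1-k)}\to 0$ and $0\to B_{i,i+2}\to A_{i+2}^{(0)}\oplus A_{i+2}^{(1)}\oplus B_{i,i+4}\to B_{i+2,i+4}\to 0$ — that the composite $X=A_l^{(k)}\to B_{l,l+2}\to A_{l+2}^{(1-k)}$ is zero in $k(\mathcal{P})$ because it equals the mesh/composite factoring through the Auslander-Reiten sequence ending at $A_{l+2}^{(1-k)}$, whose source is $A_l^{(k)}$. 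More generally, every path from $A_l^{(k)}$ to some $A_{l'}^{(1-k)}$ with $l'>l$ factors through such a mesh and hence vanishes; this is exactly the exceptional set of zeros in the statement. For all other successors $Y$ of $X$, a single path-class survives and one gets dimension $1$; for non-successors the $\Hom$ is $0$ by the standardness and the directedness of $\mathcal{P}$.

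For part $(2)$, $X$ lies on an interior orbit ($t\ge 2$), so $X$ is \emph{not} a boundary representation: it has two immediate successors, and two distinct "branches" of paths emanate, one bending toward each of the two boundary orbits. This is what produces dimension $2$. The key structural input is the notion of the pseudo backward rectangle $\mathcal{PR}_X$: a successor $Y$ lies in $\mathcal{PR}_X$ precisely when \emph{every} path $X\rightsquigarrow Y$ avoids all boundary representations, i.e. $Y$ is still "below the fork", and then exactly one path-class survives, giving dimension $1$. Once a path is allowed to pass a boundary representation, the component has "opened up" past the $D_\infty$-fork and two independent path-classes $X\rightsquigarrow Y$ appear — one going around each side — neither being killed by a mesh relation; this gives dimension $2$. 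The remaining case is $Y=A_l^{(k)}$ on a boundary orbit itself: here the two branches both terminate at the single boundary vertex, and I would check via the Auslander-Reiten sequences of Remark \ref{GammaQ}$(1)$ that their difference is a mesh relation, so only one class survives and the dimension is $1$. For non-successors the $\Hom$ vanishes as before.

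The main obstacle I expect is the bookkeeping in part $(2)$: precisely identifying, for $Y$ outside $\mathcal{PR}_X$ and off the boundary orbits, that there are exactly two surviving path-classes — no more (because beyond the two "go-around-the-fork" routes the mesh relations identify everything) and no fewer (because these two are genuinely independent in $k(\mathcal{P})$). Establishing this cleanly requires either an inductive argument on meshes along the component, or an explicit realization inside ${\rm rep}(D_n)$ for $n$ large as suggested in Remark \ref{GammaQ}$(4)$, where the analogous $\Hom$-dimensions for $D_n$ are classically known; I would most likely carry out the latter, transporting the known $D_n$ computation to the infinite case via the standardness and the fact that any two fixed representations already live in such a finite truncation.
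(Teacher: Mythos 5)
Your proposal is correct in substance but follows a genuinely different route from the paper. The paper first uses standardness only to get the vanishing for non-successors, then applies Lemma \ref{uf}(1) to shift $X$ along its $\tau_Q$-orbit down to the projective $P_t$ itself; at that point the dimensions are immediate, since ${\rm Hom}_{{\rm rep}(Q)}(P_t,Y)$ is just the vector space $Y_t$, so the three case lists in the statement are read off from the explicit dimension vectors of $A_{n,m}$, $A_m^{(k)}$, $B_{n,m}$ (this is why the paper's proof is essentially a list of equivalences with no mesh computation). You instead keep $X$ arbitrary in its orbit and count path classes in the mesh category $k(\mathcal{P})$ modulo mesh relations, or fall back on a truncation to ${\rm rep}(D_n)$. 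Your individual mesh claims are right — the mesh ending at $A_{l+2}^{(1-k)}$ has indecomposable middle term $B_{l,l+2}$, so it kills exactly the composite you name, and the two classes going around the fork are indeed independent for $Y$ outside the pseudo rectangle — but the exact counts (exactly one, respectively exactly two, surviving classes and no more) are the entire content of the lemma, and you defer them to ``bookkeeping'' or to classically known $D_n$ Hom-dimensions; the $\tau$-shift-to-a-projective device is precisely what makes that bookkeeping a one-line dimension-vector check, which is what the paper's approach buys. Your $D_n$ fallback is legitimate (two fixed representations are supported on finitely many vertices, and Hom over the full subquiver agrees with Hom over $Q$, in the spirit of Remark \ref{GammaQ}(4)), it is just longer than necessary. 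Two small points: your reading of $\mathcal{PR}_X$ as the successors $Y$ all of whose paths $X\rightsquigarrow Y$ avoid boundary representations is the right one and matches the explicit set used in the paper's proof (the paper's Definition actually attaches that description to the \emph{forward} rectangle $\mathcal{PR}^X$, so the statement's subscript is a notational slip you silently corrected); and objects in the $\tau_Q$-orbit of $P_2$, i.e.\ the $B_{i,i+2}$, have three immediate successors rather than two, which affects your ``two branches'' heuristic but not the dimension counts themselves.
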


\begin{proof}
$(1)$~ Let $X, Y$ be preprojective representations lying in $\Gamma_{{\rm rep}(Q)}$. Let $X$ lies in the $\tau_{\mathcal{Q}}$-orbit of $P_{t}, t=0, 1$. If $Y$ is not a successor of $X$ in $\mathcal{P}$, then ${\rm Hom}_{{\rm rep}(Q)}(X, Y)=0$ since $\mathcal{P}$ is standard. Otherwise since $\tau_{Q}~A_{l+2}^{(k)}=A_{l}^{(1-k)}, l\geq 1, l~{\rm odd}$, by Lemma \ref{uf} we shall consider only the case where $X=P_{0}=A_{1}^{(1)}$. Since $Y$ is a successor of $P_{0}$, it follows that $Y=P_{0}, Y=B_{i, j}, i<j, i,j~{\rm odd}$ or $Y=A_{l}^{(k)}, l>1, l~{\rm odd}, k\in \{0, 1\}$. Hence in this case we have
$${\rm dim}_{k}{\rm Hom}_{{\rm rep}(Q)}(P_{0},Y)=0 \Leftrightarrow Y\in \{A_{l}^{(0)}~|~l>1,l~{\rm odd}\};$$
$$\begin{array}{rcl}
{\rm dim}_{k}{\rm Hom}_{{\rm rep}(Q)}(P_{0},Y)=1 & \Leftrightarrow & Y\in \{P_{0}\}\cup\{B_{i, j}~|~ i<j, i,j~{\rm odd}\}\cup\{A_{l}^{(1)}~|~l>1,l~{\rm odd}\};\\
                                                                                & \Leftrightarrow & Y\in \{Z~|~Z$ is a successor of $P_{0}\}\backslash 
                                                                                \{A_{l}^{(0)}~|~l>1,l~{\rm odd}\}.
\end{array}$$

Statement $(1)$ is established.

$(2)$~ Similarly we may assume that $X=P_{t}, t\geq 2$. Then all the successors of $P_{t}$ in $\mathcal{P}$ have the following forms:
$$\mathcal{PR}_{P_{t}}=\{B_{i,j}~|~1\leq i<t\leq j, i,j~{\rm odd}\}\cup\{A_{k,l}~|~1<k\leq t\leq l, k,l~{\rm odd}\};$$
$$S_{1}=\{A_{l}^{(k)}~|~t\leq l, l~{\rm odd}, k\in \{0,1\}\}~~~~{\rm and}~~~~S_{2}=\{B_{i,j}~|~t\leq i<j, i,j~{\rm odd}\}.$$
Hence in this case we have 
$$\begin{array}{rcl}
{\rm dim}_{k}{\rm Hom}_{{\rm rep}(Q)}(P_{t},Y)=1 & \Leftrightarrow & Y\in\{B_{i,j}~|~1\leq i<t\leq j, i,j~{\rm odd}\}~\cup \\
                                                                               &                         &  \{A_{k,l}~|~1<k\leq t\leq l, k,l~{\rm odd}\}\cup
                                                                                \{A_{l}^{(k)}~|~t\leq l, l~{\rm odd}, k\in \{0, 1\}\} \\
                                                                               & \Leftrightarrow & Y\in \mathcal{PR}_{P_{t}}\cup S_{1} \\
                                                                               & \Leftrightarrow & Y\in \mathcal{PR}_{P_{t}} \cup(\{Z~|~Z$ is a successor of $P_{t}\}\cap \{A_{l}^{(k)}~|~ l\geq 1, l~{\rm odd}, k\in\{0, 1\}\});                
\end{array}$$
$$\begin{array}{rcl}
{\rm dim}_{k}{\rm Hom}_{{\rm rep}(Q)}(P_{t},Y)=2 & \Leftrightarrow & Y\in \{B_{i,j}~|~t\leq i<j, i,j~{\rm odd}\} \\
                                                                               & \Leftrightarrow & Y\in S_{2} \\
                                                                               & \Leftrightarrow & Y\in \{Z~|~Z$ is a successor of $P_{t}\}\backslash (\mathcal{PR}_{P_{t}}\cup S_{1}) \\
                                                                               & \Leftrightarrow & Y\in \{Z~|~Z$ is a successor of $P_{t}\}\backslash (\mathcal{PR}_{P_{t}} \cup \{A_{l}^{(k)}~|~ l\geq 1, l~{\rm odd}, k\in \{0, 1\}\}).
\end{array}$$

Since $\{Z~|~Z$ is a successor of $P_{t}$ in $\mathcal{P}\}=\mathcal{PR}_{P_{t}}\cup S_{1}\cup S_{2}$ and $\mathcal{P}$ is standard, it follows that 
$${\rm dim}_{k}{\rm Hom}_{{\rm rep}(Q)}(P_{t},Y)=0 \Leftrightarrow Z {\rm ~is ~not ~a ~successor ~of~} P_{t}$$
at once. The proof of the lemma is completed.
\end{proof}

For the rest of this section, we shall concentrate on the morphisms from $X$ to $Y$, where $X$ lies in the preprojective component $\mathcal{P}$ and $Y$ lies in the regular component $\mathcal{R}$ of $\Gamma_{{\rm rep}(Q)}$. 

Let $S$ be a quasi-simple representation lying in $\mathcal{R}$. Since $\mathcal{R}$ is of shape $\mathbb{Z}\mathbb{A}_{\infty}$, $\mathcal{R}$ has a unique ray starting in $S$, written as $(S\rightarrow)$, and a unique co-ray ending in $S$ written as $(\rightarrow S)$. We denote by $\mathcal{W}(S)$ the full subquiver of $\mathcal{R}$ generated by the representations $Z$ for which there exist paths $M\rightsquigarrow Z\rightsquigarrow N$, where $M\in (\rightarrow S)$ and $N\in (S\rightarrow)$, and call it the \emph{infinite wing} with \emph{wing vertex} $S$; see [\ref{LP2}].

The following is analogous to the $A_{\infty}^{\infty}$ case; compare [\ref{LP2}, Proposition 2.6].

\begin{proposition}\label{CtoR}
Let $X$ be a preprojective representation lying in $\Gamma_{{\rm rep}(Q)}$. Let $X$ lies in the $\tau_{\mathcal{Q}}$-orbit of $P_{t}, t\geq 0$. 

$(1)$~ If $t=0,1$, then the regular component $\mathcal{R}$ of $\Gamma_{{\rm rep}(Q)}$ has a unique quasi-simple $Z$ such that, for any $Y\in \mathcal{R}$, we have 
\[
{\rm dim}_{k}{\rm Hom}_{{\rm rep}(Q)}(X,Y)=\left\{
\begin{array}{ll}
1 &\mbox{$Y\in \mathcal{W}(Z);$}\\
0 &\mbox{$Y\notin \mathcal{W}(Z).$}
\end{array}
\right.
\] 

$(2)$~ If $t\geq 2$, then the regular component $\mathcal{R}$ of $\Gamma_{{\rm rep}(Q)}$ has two quasi-simples $Z_{1}, Z_{2}$ such that, for any $Y\in \mathcal{R}$, we have 
\[
{\rm dim}_{k}{\rm Hom}_{{\rm rep}(Q)}(X,Y)=\left\{
\begin{array}{ll}
2 &\mbox{$Y\in \mathcal{W}(Z_{1})\cap \mathcal{W}(Z_{2});$}\\
1 &\mbox{$Y\in (\mathcal{W}(Z_{1})\cup \mathcal{W}(Z_{2}))\backslash (\mathcal{W}(Z_{1})\cap \mathcal{W}(Z_{2}));$}\\
0 &\mbox{$Y\notin \mathcal{W}(Z_{1})\cup \mathcal{W}(Z_{2}).$}
\end{array}
\right.
\] 
\end{proposition}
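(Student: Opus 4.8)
The plan is to reduce to $X=P_{t}$, compute the relevant ${\rm Hom}$-space dimensions from the explicit shapes of the regular representations, and then recognize the resulting function on $\mathcal{R}$ as a sum of one or two characteristic functions of infinite wings. First I would invoke Lemma~\ref{uf}(1): writing $X=\tau_{Q}^{-s}P_{t}$ with $s\geq 0$, one has ${\rm Hom}_{{\rm rep}(Q)}(X,Y)\cong {\rm Hom}_{{\rm rep}(Q)}(P_{t},\tau_{Q}^{s}Y)$ for every $Y$. Since $\mathcal{R}$ is a regular component it is $\tau_{Q}$-stable, and $\tau_{Q}$ acts on $\mathcal{R}\cong\mathbb{Z}\mathbb{A}_{\infty}$ as a translation, hence permutes the quasi-simples and carries infinite wings to infinite wings; so it suffices to treat $X=P_{t}$, the quasi-simples attached to a general $X$ being the $\tau_{Q}^{-s}$-translates of those attached to $P_{t}$.

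Next, since $P_{t}$ is the indecomposable projective representation at the vertex $t$ and $Q$ is a tree, evaluation at $t$ gives ${\rm Hom}_{{\rm rep}(Q)}(P_{t},R)\cong R_{t}$ for every $R$, where $R_{t}$ is the $k$-space of $R$ at $t$; so ${\rm dim}_{k}{\rm Hom}_{{\rm rep}(Q)}(P_{t},R)={\rm dim}_{k}R_{t}$. Reading this off the descriptions of the regular representations (the $A_{n,m}$ and $B_{n,m}$ with $n+m$ odd) recalled before Remark~\ref{GammaQ}, I would record: for $t\in\{0,1\}$, ${\rm dim}_{k}R_{t}$ is $1$ for $R$ of type $B_{n,m}$ and $0$ for $R$ of type $A_{n,m}$; for $t\geq 2$, it is $2$ for $R=B_{n,m}$ with $t\leq n$, it is $1$ for $R=A_{n,m}$ with $n\leq t\leq m$ or $R=B_{n,m}$ with $n<t\leq m$, and it is $0$ otherwise. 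In particular $d(R):={\rm dim}_{k}{\rm Hom}_{{\rm rep}(Q)}(P_{t},R)$ is bounded by $2$ on $\mathcal{R}$. Moreover $P_{t}$ is projective, so ${\rm Ext}^{1}_{{\rm rep}(Q)}(P_{t},-)=0$; applying ${\rm Hom}_{{\rm rep}(Q)}(P_{t},-)$ to each Auslander-Reiten sequence of $\mathcal{R}$ listed in Remark~\ref{GammaQ}(3) then yields a short exact sequence, so $d$ is a non-negative additive function on the stable translation quiver $\mathcal{R}\cong\mathbb{Z}\mathbb{A}_{\infty}$.

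The core step is to identify $d$ with a sum of wing functions. A non-negative bounded additive function on $\mathbb{Z}\mathbb{A}_{\infty}$ is a non-negative combination of finitely many characteristic functions $\mathbf{1}_{\mathcal{W}(S)}$ of infinite wings, $S$ quasi-simple, and is determined by its restriction to the quasi-simples (the only quasi-simple in $\mathcal{W}(S)$ being $S$); this is the combinatorial fact underlying [\ref{LP2}, Proposition 2.6]. The quasi-simples of $\mathcal{R}$ are $B_{1,2}$ together with the $A_{j,j+1}$ for $j\geq 2$, and $B_{1,2}$ has $k$ at the vertices $0,1,2$ while $A_{j,j+1}$ has $k$ at the vertices $j,j+1$; so by the formula above $d$ is nonzero, with value $1$, on exactly the quasi-simple $B_{1,2}$ when $t\in\{0,1\}$, and on exactly two quasi-simples, again with value $1$ on each, when $t\geq 2$ (these being $B_{1,2}$ and $A_{2,3}$ for $t=2$, and $A_{t-1,t}$ and $A_{t,t+1}$ for $t\geq 3$). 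Therefore $d=\mathbf{1}_{\mathcal{W}(Z)}$ with $Z=B_{1,2}$ in the first case, and $d=\mathbf{1}_{\mathcal{W}(Z_{1})}+\mathbf{1}_{\mathcal{W}(Z_{2})}$ with $Z_{1}\neq Z_{2}$ in the second; reading off the values $2,1,0$ of $\mathbf{1}_{\mathcal{W}(Z_{1})}+\mathbf{1}_{\mathcal{W}(Z_{2})}$ according as $Y\in\mathcal{W}(Z_{1})\cap\mathcal{W}(Z_{2})$, $Y$ lies in exactly one of the two wings, or $Y$ lies in neither, gives (1) and (2).

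The hard part will be making the core step fully rigorous: passing from ``$d$ is a non-negative bounded additive function on $\mathbb{Z}\mathbb{A}_{\infty}$, explicitly known on the quasi-simple row'' to ``$d$ is the stated sum of one or two wing functions''. One must either invoke (and, if not available in the cited form, re-derive) the classification of positive additive functions on $\mathbb{Z}\mathbb{A}_{\infty}$, or else describe the wings $\mathcal{W}(B_{1,2})$ and $\mathcal{W}(A_{j,j+1})$ explicitly in terms of the labels $A_{n,m},B_{n,m}$ and verify the required set equalities directly against the Auslander-Reiten sequences of Remark~\ref{GammaQ}(3); since both sides are additive, this infinite verification reduces to a finite one, namely matching the two functions on a ray and a coray through a common quasi-simple of $\mathcal{R}$ and then propagating by the mesh relations. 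The whole argument runs parallel to [\ref{LP2}, Proposition 2.6] for type $A_{\infty}^{\infty}$, the only new ingredient being the exceptional quasi-simple $B_{1,2}$ of $\mathcal{R}$, which plays the role of the non-existent $A$-type ``$A_{1,2}$'' and is responsible for the distinctive behaviour of $P_{0}$, $P_{1}$ and $P_{2}$.
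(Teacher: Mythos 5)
Your argument is correct, and it shares the paper's first two steps: the reduction to $X=P_{t}$ via Lemma \ref{uf}(1) and the $\tau_{Q}$-stability of $\mathcal{R}$, and the computation ${\rm dim}_{k}{\rm Hom}_{{\rm rep}(Q)}(P_{t},Y)={\rm dim}_{k}Y_{t}$ read off from the explicit shapes of the $A_{n,m}$ and $B_{n,m}$ (the paper uses these dimension counts implicitly, stating them case by case). Where you genuinely diverge is the final identification with the wings: the paper writes out $\mathcal{W}(B_{1,2})$, $\mathcal{W}(A_{2,3})$, $\mathcal{W}(A_{t,t+1})$, $\mathcal{W}(A_{t-1,t})$ explicitly as sets of labels and compares them with the level sets of ${\rm dim}_{k}{\rm Hom}_{{\rm rep}(Q)}(P_{t},-)$, doing this for representative cases ($t=0$, $t=2$, $t>2$ odd); you instead use projectivity of $P_{t}$ to make $d={\rm dim}_{k}{\rm Hom}_{{\rm rep}(Q)}(P_{t},-)$ additive on the meshes of Remark \ref{GammaQ}(3) and then pin it down by its values on the quasi-simple row. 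Your approach buys a uniform treatment of all $t$ and all parities and avoids enumerating the wings, at the cost of the "core step" you flag; note, though, that this step is lighter than you suggest: you do not need the full classification of bounded non-negative additive functions, only that an additive function on $\mathbb{Z}\mathbb{A}_{\infty}$ is determined by its restriction to the quasi-simples (an immediate induction on quasi-length via the mesh relation $d(X)+d(\tau^{-1}X)=d(\text{middle})$, which gives $d$ on an object as the sum of the quasi-simple values over the interval it covers) together with the observation that the indicator of $\mathcal{W}(S)$ is additive, equivalently that $\mathcal{W}(S)$ consists exactly of the objects whose interval of quasi-simples contains $S$; since you compute the quasi-simple values of $d$ explicitly ($B_{1,2}$ alone for $t\in\{0,1\}$; $B_{1,2},A_{2,3}$ for $t=2$; $A_{t-1,t},A_{t,t+1}$ for $t\geq 3$, matching the paper's choices), boundedness and finiteness considerations are not needed. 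With that step filled in as indicated, the proof is complete and equivalent in outcome to the paper's.
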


\begin{proof}
$(1)$~ Let $X$ be a preprojective representation lying in $\Gamma_{{\rm rep}(Q)}$. Let $X$ lies in the $\tau_{\mathcal{Q}}$-orbit of $P_{t}, t=0$ or $1$. Since $\mathcal{R}$ is $\tau_{Q}$-stable, by Lemma \ref{uf} we may assume that $X=P_{t}$, $t=0$ or $1$. We shall consider only the case where $t=0$. Then there exists a unique quasi-simple representation $Z=B_{1,2}\in \mathcal{R}$ such that ${\rm Hom}_{{\rm rep}(Q)}(P_{0},Z)\neq 0$. Observe that $\mathcal{W}(B_{1,2})=\{B_{i,j}~|~ 1\leq i<j, i+j ~{\rm odd}\}$ and $\mathcal{R}\backslash \mathcal{W}(B_{1,2})=\{A_{k,l}~|~ 2\leq k<l, k+l~{\rm odd}\}$. Thus for any $Y\in \mathcal{R}$, we have 
$$\begin{array}{rcl}
{\rm dim}_{k}{\rm Hom}_{{\rm rep}(Q)}(P_{0},Y)=1 & \Leftrightarrow & Y\in \{B_{i,j}~|~1\leq i<j, i+j ~{\rm odd}\} \\
                                                                                & \Leftrightarrow & Y\in \mathcal{W}(B_{1,2}),
\end{array}$$
$$\begin{array}{rcl}
{\rm dim}_{k}{\rm Hom}_{{\rm rep}(Q)}(P_{0},Y)=0 & \Leftrightarrow & Y\in \{A_{k,l}~|~ 2\leq k<l, k+l ~{\rm odd}\} \\ 
                                                                                & \Leftrightarrow & Y\notin \mathcal{W}(B_{1,2}).
\end{array}$$

$(2)$~ Similarly, we assume that $X=P_{t}, t\geq 2$. Suppose first that $t=2$. Then there exist two quasi-simples $Z_{1}=B_{1,2}$ and $Z_{2}=A_{2,3}$ such that ${\rm Hom}_{{\rm rep}(Q)}(P_{2},Z_{1})\neq 0$ and ${\rm Hom}_{{\rm rep}(Q)}(P_{2},Z_{2})\neq 0$. It is easy to see that $\mathcal{W}(A_{2,3})=\{B_{i,j}~|~2\leq i<j, i+j ~{\rm odd}\}\cup\{A_{2,l}~|~3\leq l, l~{\rm odd}\}$. Thus for any $Y\in \mathcal{R}$, we have 
$$\begin{array}{rcl}
{\rm dim}_{k}{\rm Hom}_{{\rm rep}(Q)}(P_{2},Y)=2 & \Leftrightarrow & Y\in \{B_{i,j}~|~ 2\leq i<j, i+j ~{\rm odd}\} \\
                                                                                & \Leftrightarrow & Y\in \mathcal{W}(B_{1,2})\cap \mathcal{W}(A_{2,3}),
\end{array}$$
$$\begin{array}{rcl}
{\rm dim}_{k}{\rm Hom}_{{\rm rep}(Q)}(P_{2},Y)=1 & \Leftrightarrow & Y\in \{B_{1,j}~|~ 2\leq j, j~{\rm even}\}\cup\{A_{2,l}~|~ 3\leq l, l~{\rm odd}\} \\
                                                                                & \Leftrightarrow & Y\in (\mathcal{W}(B_{1,2})\cup \mathcal{W}(A_{2,3}))\backslash (\mathcal{W}(B_{1,2})\cap \mathcal{W}(A_{2,3})),                                                                            
\end{array}$$ 
$$\begin{array}{rcl}
{\rm dim}_{k}{\rm Hom}_{{\rm rep}(Q)}(P_{2},Y)=0 & \Leftrightarrow & Y\in \{A_{k,l}~|~ 3\leq k<l, k+l ~{\rm odd}\} \\
                                                                                & \Leftrightarrow & Y \notin \mathcal{W}(B_{1,2})\cup \mathcal{W}(A_{2,3}).
\end{array}$$

Suppose now that $t>2$. We shall consider only the case where $t$ is odd. Then there exist two quasi-simples $Z_{1}=A_{t,t+1}$ and $Z_{2}=A_{t-1,t}$ such that ${\rm Hom}_{{\rm rep}(Q)}(P_{t},Z_{1})\neq 0$ and ${\rm Hom}_{{\rm rep}(Q)}(P_{t},Z_{2})\neq 0$. Observe that 
$$\begin{array}{rcl}
\mathcal{W}(A_{t,t+1}) & = & \{B_{i_{1},j_{1}}~|~1\leq i_{1}< t\leq j_{1}, i_{1} ~{\rm odd}, j_{1} ~{\rm even}\} ~\cup\\
                                     &    & \{B_{i_{2},j_{2}}~|~t\leq i_{2}< j_{2}, i_{2}+j_{2}~{\rm odd}\} ~\cup\\
                                     &    & \{A_{k,l}~|~3\leq k\leq t\leq l, k~{\rm odd}, l~{\rm even}\},
\end{array}$$
$$\begin{array}{rcl}
\mathcal{W}(A_{t-1,t}) & = & \{B_{i_{1},j_{1}}~|~2\leq i_{1}< t\leq j_{1}, i_{1} ~{\rm even}, j_{1}~{\rm odd}\} 
~\cup\\
                                    &    & \{B_{i_{2},j_{2}}~|~t\leq i_{2}<j_{2}, i_{2}+j_{2} ~{\rm odd}\} ~\cup\\
                                    &    & \{A_{k,l}~|~2\leq k\leq t\leq l, k~{\rm even}, l~{\rm odd}\}.
\end{array}$$
Thus for any $Y\in \mathcal{R}$, we have 
$$\begin{array}{rcl}
{\rm dim}_{k}{\rm Hom}_{{\rm rep}(Q)}(P_{t},Y)=2 & \Leftrightarrow & Y\in \{B_{i,j}~|~ t\leq i<j, i+j~{\rm odd}\} \\
                                                                               & \Leftrightarrow & Y\in \mathcal{W}(A_{t,t+1})\cap \mathcal{W}(A_{t-1,t});
\end{array}$$
$$\begin{array}{rcl}
{\rm dim}_{k}{\rm Hom}_{{\rm rep}(Q)}(P_{t},Y)=1 & \Leftrightarrow & Y\in \{B_{i,j}~|~ 1\leq i<t\leq j, i+j ~{\rm odd}\}~\cup \\ 
                                                                               &                         & \{A_{k,l}~|~2\leq k\leq t\leq l, k+l ~{\rm odd}\}\\
                                                                               & \Leftrightarrow & Y\in (\mathcal{W}(A_{t,t+1})\cup \mathcal{W}(A_{t-1,t}))\backslash (\mathcal{W}(A_{t,t+1})\cap \mathcal{W}(A_{t-1,t}));
\end{array}$$
$$\begin{array}{rcl}
{\rm dim}_{k}{\rm Hom}_{{\rm rep}(Q)}(P_{t},Y)=0 & \Leftrightarrow & Y\in \{B_{i,j}~|~1\leq i<j<t, i+j~{\rm odd}\}~\cup \\
                                                                               &                         & \{A_{k_{1},l_{1}}~|~2\leq k_{1}<l_{1}<t, k_{1}+l_{1}~{\rm odd}\}~\cup \\
                                                                               &                         & \{A_{k_{2},l_{2}}~|~t<k_{2}<l_{2}, k_{2}+l_{2}~{\rm odd}\} \\
                                                                               & \Leftrightarrow & Y\notin \mathcal{W}(A_{t,t+1})\cup \mathcal{W}(A_{t-1,t}).
\end{array}$$
The proof of the proposition is completed.
\end{proof}

We end this section with the following example, which describes the distribution of $k$-dimension of the {\rm Hom}-space ${\rm Hom}_{{\rm rep}(Q)}(X, Y)$ in detail when $X$ is preprojective and $Y\in \mathcal{P}\cup \mathcal{R}$.

\begin{example}\label{kdim}
Let $X=P_{5}=A_{5,5}$. Then the distribution of ${\rm dim}_{k}{\rm Hom}_{{\rm rep}(Q)}(X, -)$ is shown as follows.

\begin{figure}[h] \centering

  \includegraphics*[150,10][555,525]{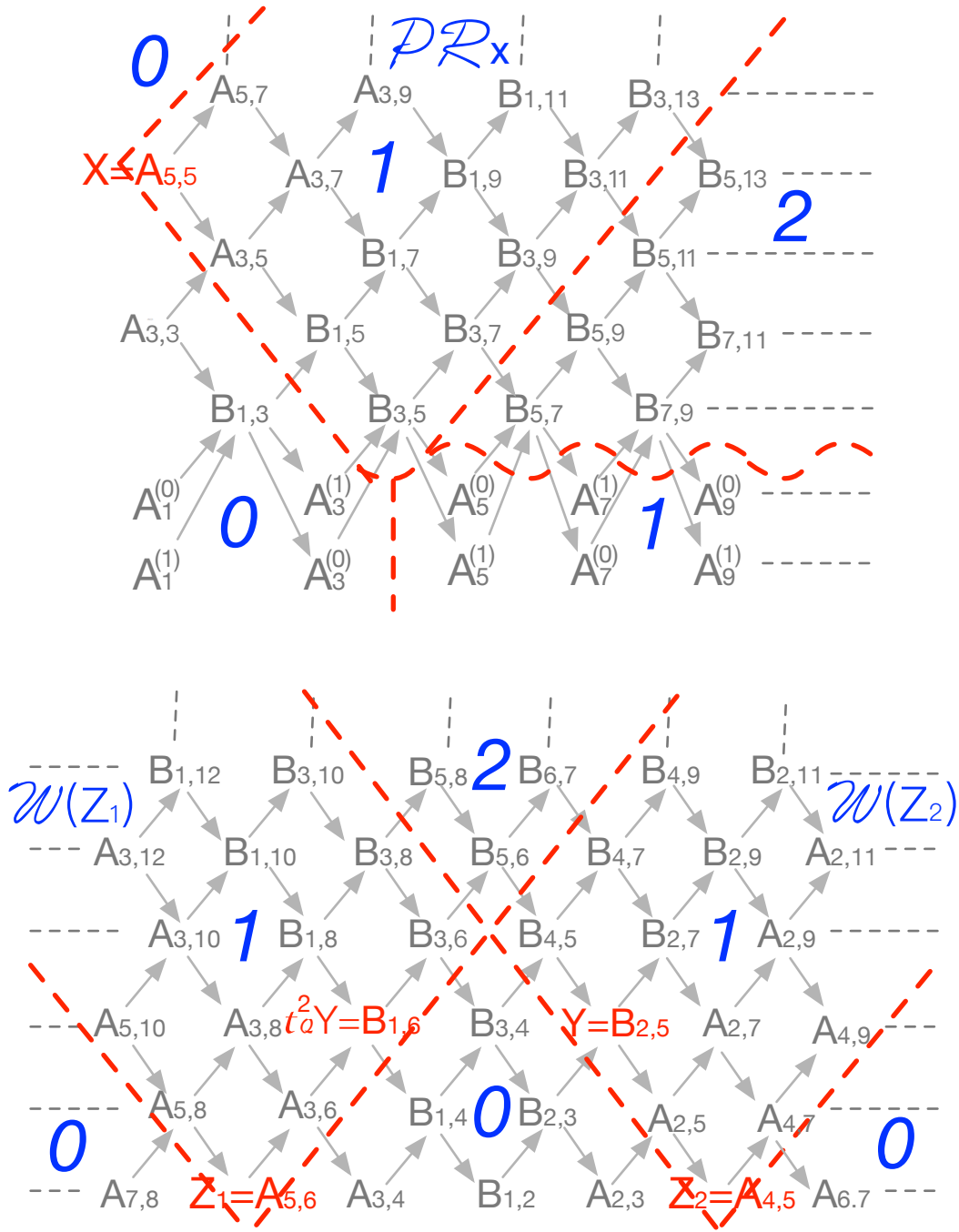}

  \caption{The distribution of ${\rm dim}_{k}{\rm Hom}_{{\rm rep}(Q)}(P_{5}, -)$}
\end{figure}
\end{example}

\section{The cluster category $\mathcal{C}(Q)$}

This section proves our main result, that is, the orbit category $\mathcal{C}(Q)$ is a cluster category. Recall from Remark \ref{CQ} that every indecomposable object in $\mathcal{C}(Q)$ is isomorphic to a unique object in the fundamental domain $\mathcal{F}(Q)=\mathcal{P}\sqcup \mathcal{I}[-1]\sqcup \mathcal{R}$. We begin with the following lemma.

\begin{lemma}\label{Rok}
Let $X, Y$ be two objects lying in the regular component $\mathcal{R}$. If ${\rm Ext}_{\mathcal{C}(Q)}^{1}(X,Y)=0$, then either ${\rm Hom}_{\mathcal{C}(Q)}(X,Y)=0$ or ${\rm Hom}_{\mathcal{C}(Q)}(Y,X)=0$.
\end{lemma}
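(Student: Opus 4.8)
The plan is to transport the statement to the abelian category ${\rm rep}(Q)$, where the regular component $\mathcal{R}$ is standard of shape $\mathbb{Z}\mathbb{A}_{\infty}$, and then to finish with a short combinatorial case analysis. We treat the main case, where $X$ and $Y$ are non-isomorphic indecomposable objects of $\mathcal{R}$ (the reduction to indecomposables being routine). Since $\mathcal{R}$ consists of representations and is $\tau_{D}$-stable, $\tau_{D}^{2}X=\tau_{Q}^{2}X$ is again a representation in $\mathcal{R}$, and likewise for $Y$, so Lemma~\ref{uf}(2) gives
$$\mathrm{Hom}_{\mathcal{C}(Q)}(X,Y)\cong\mathrm{Hom}_{{\rm rep}(Q)}(X,Y)\oplus D\,\mathrm{Hom}_{{\rm rep}(Q)}(Y,\tau_{Q}^{2}X)$$
and symmetrically for $\mathrm{Hom}_{\mathcal{C}(Q)}(Y,X)$. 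Moreover $Y[1]=F(\tau_{Q}Y)$ in $D^{b}({\rm rep}(Q))$, hence $Y[1]\cong\tau_{Q}Y$ in $\mathcal{C}(Q)$, so applying Lemma~\ref{uf}(2) once more (now with second argument $\tau_{Q}Y\in\mathcal{R}$) together with Lemma~\ref{uf}(1) yields
$$\mathrm{Ext}^{1}_{\mathcal{C}(Q)}(X,Y)=\mathrm{Hom}_{\mathcal{C}(Q)}(X,Y[1])\cong\mathrm{Hom}_{{\rm rep}(Q)}(X,\tau_{Q}Y)\oplus D\,\mathrm{Hom}_{{\rm rep}(Q)}(Y,\tau_{Q}X).$$
Thus the hypothesis $\mathrm{Ext}^{1}_{\mathcal{C}(Q)}(X,Y)=0$ is exactly $\mathrm{Hom}_{{\rm rep}(Q)}(X,\tau_{Q}Y)=0=\mathrm{Hom}_{{\rm rep}(Q)}(Y,\tau_{Q}X)$, and the conclusion becomes a statement about $\mathrm{Hom}$-spaces inside $\mathcal{R}$ only.

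Next I would fix a combinatorial model for $\mathcal{R}$. As $\mathcal{R}$ is standard of shape $\mathbb{Z}\mathbb{A}_{\infty}$, by Lemma~\ref{RtoR} its indecomposables may be labelled $M_{[a,b]}$ for integers $a\leq b$ in such a way that $\tau_{Q}M_{[a,b]}=M_{[a+1,b+1]}$ and
$$\mathrm{Hom}_{{\rm rep}(Q)}(M_{[a,b]},M_{[c,d]})\neq 0\quad\Longleftrightarrow\quad c\leq a\leq d\leq b,$$
the $\mathrm{Hom}$-space then being one-dimensional; this is the usual interval description of the mesh category of $\mathbb{Z}\mathbb{A}_{\infty}$, and the orientation of $\tau_{Q}$ relative to this ordering is pinned down by evaluating $\tau_{Q}$ and a couple of $\mathrm{Hom}$-spaces on small representations. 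Write $X=M_{[a,b]}$ and $Y=M_{[c,d]}$, with $(a,b)\neq(c,d)$.

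Finally, suppose for contradiction that $\mathrm{Ext}^{1}_{\mathcal{C}(Q)}(X,Y)=0$ while both $\mathrm{Hom}_{\mathcal{C}(Q)}(X,Y)\neq 0$ and $\mathrm{Hom}_{\mathcal{C}(Q)}(Y,X)\neq 0$. By the first displayed isomorphism, at least one of $\mathrm{Hom}_{{\rm rep}(Q)}(X,Y)$ and $\mathrm{Hom}_{{\rm rep}(Q)}(Y,\tau_{Q}^{2}X)$ is nonzero, and at least one of $\mathrm{Hom}_{{\rm rep}(Q)}(Y,X)$ and $\mathrm{Hom}_{{\rm rep}(Q)}(X,\tau_{Q}^{2}Y)$ is nonzero; I would split into the four resulting cases and read off inequalities among $a,b,c,d$. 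In case (i), $\mathrm{Hom}(X,Y)\neq 0$ and $\mathrm{Hom}(Y,X)\neq 0$ force $c\leq a\leq d\leq b$ and $a\leq c\leq b\leq d$, hence $a=c$ and $b=d$, i.e. $X\cong Y$, against our assumption. In case (ii), $\mathrm{Hom}(X,Y)\neq 0$ and $\mathrm{Hom}(X,\tau_{Q}^{2}Y)\neq 0$ force $c+2\leq a\leq d\leq b-2$, whence $c+1\leq a\leq d+1\leq b$, so $\mathrm{Hom}(X,\tau_{Q}Y)\neq 0$, contradicting the hypothesis. In case (iii), $\mathrm{Hom}(Y,\tau_{Q}^{2}X)\neq 0$ and $\mathrm{Hom}(Y,X)\neq 0$ force $a+2\leq c\leq b\leq d-2$, whence $a+1\leq c\leq b+1\leq d$, so $\mathrm{Hom}(Y,\tau_{Q}X)\neq 0$, again contradicting the hypothesis. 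In case (iv), $\mathrm{Hom}(Y,\tau_{Q}^{2}X)\neq 0$ and $\mathrm{Hom}(X,\tau_{Q}^{2}Y)\neq 0$ force $a+2\leq c$ and $c+2\leq a$, which is absurd. As every case is impossible, the lemma follows.

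The one genuinely delicate point is the bookkeeping in the combinatorial model — specifically the orientation of $\tau_{Q}$ against the ordering on intervals, since with the opposite convention case (iv) ceases to be vacuous and the argument breaks; once this is settled, the case analysis is entirely mechanical. Alternatively, cases (ii)--(iv) can be carried out intrinsically, using that the forward and backward rectangles $\mathcal{R}^{X}$ and $\mathcal{R}_{X}$ of Lemma~\ref{RtoR} are convex along $\tau_{Q}$-orbits, together with Lemma~\ref{uf}(1); this avoids coordinates, at the cost of slightly more care with the shape of $\mathbb{Z}\mathbb{A}_{\infty}$.
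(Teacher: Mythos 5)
Your proposal is correct and follows essentially the same route as the paper: the reduction via Lemma \ref{uf}(2) to $\mathrm{Hom}$-spaces in ${\rm rep}(Q)$ is identical, and your interval inequalities are just a coordinate version of the paper's use of Lemma \ref{RtoR} (cases (ii)--(iii) encode the convexity of the forward rectangle under $\tau_{Q}$, and case (iv) the acyclicity of $\mathcal{R}$). The orientation you flag as delicate is in fact forced by the mesh structure (with your $\mathrm{Hom}$-criterion the meshes give $\tau_{Q}^{-1}M_{[a,b]}=M_{[a-1,b-1]}$), and, exactly as in the paper's own argument, the statement has to be read for $X\ncong Y$, since $\mathrm{Ext}^{1}_{\mathcal{C}(Q)}(X,X)=0$ for regular $X$ while $\mathrm{Hom}_{\mathcal{C}(Q)}(X,X)\neq 0$.
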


\begin{proof}
Suppose to the contrary that ${\rm Hom}_{\mathcal{C}(Q)}(X,Y)\neq 0$ and ${\rm Hom}_{\mathcal{C}(Q)}(Y,X)\neq 0$. Since both $X$ and $Y$ are regular representations, by Lemma \ref{uf} we have 
$${\rm Hom}_{\mathcal{C}(Q)}(X,Y)\cong {\rm Hom}_{{\rm rep}(Q)}(X,Y)\oplus D {\rm Hom}_{{\rm rep}(Q)}(Y, \tau_{Q}^{2}X)\neq 0,$$
$${\rm Hom}_{\mathcal{C}(Q)}(Y,X)\cong {\rm Hom}_{{\rm rep}(Q)}(Y,X)\oplus D {\rm Hom}_{{\rm rep}(Q)}(X, \tau_{Q}^{2}Y)\neq 0,$$
$$\begin{array}{rcl}
{\rm Ext}_{\mathcal{C}(Q)}^{1}(X,Y) & \cong & {\rm Hom}_{\mathcal{C}(Q)}(X,\tau_{\mathcal{C}}Y) \\
                                                   & \cong & {\rm Hom}_{{\rm rep}(Q)}(X, \tau_{Q}Y)\oplus D {\rm Hom}_{{\rm rep}(Q)}(Y, \tau_{Q}X)\\
                                                   & = & 0.
\end{array}$$

Assume first that ${\rm Hom}_{{\rm rep}(Q)}(X,Y)\neq 0$. Since the regular component $\mathcal{R}$ of $\Gamma_{{\rm rep}(Q)}$ is acyclic and standard, there exists a path $X\rightsquigarrow Y$ in $\mathcal{R}$ and hence there exists no path $Y\rightsquigarrow X$ in $\mathcal{R}$. Thus ${\rm Hom}_{{\rm rep}(Q)}(Y,X)=0$ and hence ${\rm Hom}_{{\rm rep}(Q)}(X, \tau_{Q}^{2}Y)\neq 0$. By Lemma \ref{RtoR} it follows that both $Y$ and $\tau_{Q}^{2}Y$ are in the forward rectangle of $X$ and thus so is $\tau_{Q}Y$. Now by Lemma \ref{RtoR} again we have ${\rm Hom}_{{\rm rep}(Q)}(X, \tau_{Q}Y)\neq 0$, which is a contradiction.

Assume now that ${\rm Hom}_{{\rm rep}(Q)}(Y, \tau_{Q}^{2}X)\neq 0$. If ${\rm Hom}_{{\rm rep}(Q)}(Y,X)\neq 0$, then similarly we have $\tau_{Q}X$ is in the forward rectangle of $Y$ and thus ${\rm Hom}_{{\rm rep}(Q)}(Y, \tau_{Q}X)\neq 0$, which is a contradiction. Otherwise, we have ${\rm Hom}_{{\rm rep}(Q)}(X, \tau_{Q}^{2}Y)\neq 0$ and hence there exists a path $X\rightsquigarrow \tau_{Q}^{2}Y\rightsquigarrow Y\rightsquigarrow \tau_{Q}^{2}X\rightsquigarrow X$ in $\mathcal{R}$, a contradiction. The proof of the lemma is completed.
\end{proof}

In Lemma \ref{Rok} we see that if both $X$ and $Y$ are indecomposable objects lying in the regular component $\mathcal{R}$ such that ${\rm Ext}_{\mathcal{C}(Q)}^{1}(X,Y)=0$, then either ${\rm Hom}_{\mathcal{C}(Q)}(X, Y)=0$ or ${\rm Hom}_{\mathcal{C}(Q)}(Y, X)=0$. However, it is not true in general. 

\begin{lemma}\label{C det R}
Let $X$ be an object lying in the connecting component $\mathcal{P}\sqcup \mathcal{I}[-1]$ and in the $\tau_{\mathcal{C}}$-orbit of $P_{t}, t\geq 0$. Let $Y$ be an object lying in the regular component $\mathcal{R}$. Assume that ${\rm Ext}_{\mathcal{C}(Q)}^{1}(X, Y)=0$. If ${\rm Hom}_{\mathcal{C}(Q)}(X, Y)\neq 0$ and ${\rm Hom}_{\mathcal{C}(Q)}(Y, X)\neq 0$, then $t\geq 3$ and $Y$ is uniquely determined by $X$. In this case, moreover, ${\rm dim}_{k}{\rm Hom}_{\mathcal{C}(Q)}(X, Y)=1$ and ${\rm dim}_{k}{\rm Hom}_{\mathcal{C}(Q)}(Y, X)=1$.
\end{lemma}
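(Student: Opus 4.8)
The plan is to push the question down to the module category $\mathrm{rep}(Q)$ and then settle it by a combinatorial computation inside the regular component $\mathcal R$, which is standard of shape $\mathbb Z\mathbb A_\infty$.

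First I would reduce to $X=P_t$. Since $\tau_{\mathcal C}$ is an autoequivalence of $\mathcal C(Q)$ leaving $\mathcal R$ stable and $X\cong\tau_{\mathcal C}^{\,n}P_t$ for some $n$, replacing $Y$ by $\tau_{\mathcal C}^{-n}Y$ lets me assume $X=P_t$ and $Y$ is an indecomposable regular representation; uniqueness and the dimension counts are insensitive to this. Then I rewrite the three spaces involved using Lemma \ref{uf}: by part (3), $\mathrm{Hom}_{\mathcal C(Q)}(P_t,Y)\cong\mathrm{Hom}_{\mathrm{rep}(Q)}(P_t,Y)$; by part (2) together with $\mathrm{Hom}_{\mathrm{rep}(Q)}(\mathcal R,\mathcal P)=0$ and the $\tau_D$-stability of $\mathcal R$ (so $\tau_D^2Y=\tau_Q^2Y$), $\mathrm{Hom}_{\mathcal C(Q)}(Y,P_t)\cong D\,\mathrm{Hom}_{\mathrm{rep}(Q)}(P_t,\tau_Q^2Y)$; and, exactly as in the proof of Lemma \ref{Rok}, $\mathrm{Ext}^1_{\mathcal C(Q)}(P_t,Y)\cong\mathrm{Hom}_{\mathcal C(Q)}(P_t,\tau_{\mathcal C}Y)\cong\mathrm{Hom}_{\mathrm{rep}(Q)}(P_t,\tau_QY)$, using that $\tau_{\mathcal C}$ agrees with $\tau_Q$ on the regular component and part (3) again. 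So the hypotheses say exactly: $\mathrm{Hom}_{\mathrm{rep}(Q)}(P_t,Y)\neq0$, $\mathrm{Hom}_{\mathrm{rep}(Q)}(P_t,\tau_Q^2Y)\neq0$, and $\mathrm{Hom}_{\mathrm{rep}(Q)}(P_t,\tau_QY)=0$.

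Next I would feed in Proposition \ref{CtoR}. Label the quasi-simples of $\mathcal R$ as $(S_n)_{n\in\mathbb Z}$ with $\tau_QS_n=S_{n-1}$; since $\mathcal R$ is of shape $\mathbb Z\mathbb A_\infty$ each indecomposable $Y\in\mathcal R$ determines a finite nonempty interval $I(Y)=\{n : Y\in\mathcal W(S_n)\}\subseteq\mathbb Z$, this is a bijection onto finite nonempty intervals, $I(\tau_QY)=I(Y)-1$, and $\tau_Q^{\,i}\mathcal W(S)=\mathcal W(\tau_Q^{\,i}S)$. For $t=0,1$, Proposition \ref{CtoR}(1) makes $\mathrm{Hom}_{\mathrm{rep}(Q)}(P_t,-)|_{\mathcal R}$ supported with dimension $1$ on one wing $\mathcal W(S_p)$, so the hypotheses would force $p,p+2\in I(Y)$ but $p+1\notin I(Y)$ — impossible. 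Hence $t\geq2$. For $t\geq2$, Proposition \ref{CtoR}(2) gives two quasi-simples $Z_1=S_p$, $Z_2=S_q$ with $p<q$; from its proof (where $Z_1,Z_2$ are written down explicitly) one checks $Z_2=\tau_Q^{-(t-1)}Z_1$, i.e.\ $q-p=t-1$, and $\mathrm{Hom}_{\mathrm{rep}(Q)}(P_t,-)|_{\mathcal R}$ has dimension $2$ on $\mathcal W(S_p)\cap\mathcal W(S_q)$ and $1$ on its symmetric difference inside $\mathcal W(S_p)\cup\mathcal W(S_q)$. Writing $I(Y)=[\alpha,\beta]$, the three conditions become $\{p,q\}\cap[\alpha,\beta]\neq\emptyset$, $\{p+2,q+2\}\cap[\alpha,\beta]\neq\emptyset$, and $p+1,q+1\notin[\alpha,\beta]$.

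Finally I would solve this system. The conditions $p+1\notin[\alpha,\beta]$ and $q+1\notin[\alpha,\beta]$ amount to ($\beta\le p$ or $\alpha\ge p+2$) and ($\beta\le q$ or $\alpha\ge q+2$); using $p<q$ a short case check eliminates every branch except $\alpha\ge p+2$ and $\beta\le q$, and in that branch the first two conditions force $\beta=q$ and then, if $p+2\le q$, also $\alpha=p+2$, whereas if $p+2>q$ they are contradictory. Since $q-p=t-1$, this means: $t=2$ admits no such $Y$ (so in fact $t\geq3$), and for $t\geq3$ the unique solution is the $Y$ with $I(Y)=[p+2,q]$; tracing back through the reduction, $Y$ is determined by $X$. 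For the dimensions, $[p+2,q]$ contains $q$ but not $p$, so $Y\in\mathcal W(Z_2)\setminus\mathcal W(Z_1)$ and $\dim_k\mathrm{Hom}_{\mathrm{rep}(Q)}(P_t,Y)=1$; likewise $I(\tau_Q^2Y)=[p,q-2]$ contains $p$ but not $q$, so $\dim_k\mathrm{Hom}_{\mathrm{rep}(Q)}(P_t,\tau_Q^2Y)=1$; by the isomorphisms of the second paragraph, $\dim_k\mathrm{Hom}_{\mathcal C(Q)}(X,Y)=\dim_k\mathrm{Hom}_{\mathcal C(Q)}(Y,X)=1$. I expect the main obstacle to be precisely this case analysis together with pinning down that $Z_1$ and $Z_2$ lie $t-1$ apart on the quasi-simple line; the reductions via Lemma \ref{uf} and the wing bookkeeping are routine.
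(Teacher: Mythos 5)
Your proposal is correct and follows essentially the same route as the paper: reduce to $X=P_t$, use Lemma \ref{uf} to translate the hypotheses into ${\rm Hom}_{{\rm rep}(Q)}(P_t,Y)\neq 0$, ${\rm Hom}_{{\rm rep}(Q)}(P_t,\tau_Q^2Y)\neq 0$ and ${\rm Hom}_{{\rm rep}(Q)}(P_t,\tau_QY)=0$, and then apply Proposition \ref{CtoR} to the wings $\mathcal{W}(Z_1),\mathcal{W}(Z_2)$. The only difference is presentational: where the paper identifies the unique $Y$ explicitly (as $A_{2,2t-3}$, $B_{t-3,t}$ or $B_{t-4,t+1}$, formulas it reuses later in Theorem \ref{in t} and Lemma \ref{f t}), you encode wing membership by intervals of quasi-simples and solve the resulting interval conditions, which establishes $t\geq 3$, uniqueness and the two dimension counts without exhibiting $Y$ in closed form.
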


\begin{proof}
Since the Auslander-Reiten translation $\tau_{\mathcal{C}}$ for $\mathcal{C}(Q)$ coincides with the shift functor, we may assume that $X=P_{t}, t\geq 0$. Since $Y$ lies in the regular component $\mathcal{R}$, by Lemma \ref{uf} we have
$${\rm Hom}_{\mathcal{C}(Q)}(P_{t},Y) \cong {\rm Hom}_{{\rm rep}(Q)}(P_{t},Y)\neq 0,$$
$$\begin{array}{rcl}
{\rm Hom}_{\mathcal{C}(Q)}(Y, P_{t}) & \cong & {\rm Hom}_{{\rm rep}(Q)}(Y, P_{t})\oplus D {\rm Hom}_{{\rm rep}(Q)}(P_{t}, \tau_{Q}^{2}Y) \\
                                                           & = & D {\rm Hom}_{{\rm rep}(Q)}(P_{t}, \tau_{Q}^{2}Y)\neq 0,
\end{array}$$
$${\rm Ext}_{\mathcal{C}(Q)}^{1}(P_{t},Y) \cong {\rm Hom}_{\mathcal{C}(Q)}(P_{t},\tau_{\mathcal{C}}Y) \cong {\rm Hom}_{{\rm rep}(Q)}(P_{t}, \tau_{Q}Y)=0.$$

If $t=0$ or $1$, then by Proposition \ref{CtoR} it follows that $Y, \tau_{Q}^{2}Y\in \mathcal{W}(B_{1,2})$ while $\tau_{Q}Y\notin \mathcal{W}(B_{1,2})$, a contradiction. 

Suppose now that $t\geq 2$. By Proposition \ref{CtoR} again that the regular component $\mathcal{R}$ of $\Gamma_{{\rm rep}(Q)}$ has two quasi-simples $Z_{1}, Z_{2}$ such that $Y,\tau_{Q}^{2}Y\in \mathcal{W}(Z_{1})\cup \mathcal{W}(Z_{2})$ while $\tau_{Q} Y\notin \mathcal{W}(Z_{1})\cup \mathcal{W}(Z_{2})$. Then it is easy to see that in the $t=2$ case such $Y$ can not be exist and in the $t\geq 3$ case, we have \[
Y=\left\{
\begin{array}{lll}
A_{2,2t-3} &\mbox{$t=3,4;$}\\
B_{t-3,t} &\mbox{$t\geq 5, t~{\rm odd};$}\\
B_{t-4,t+1} &\mbox{$t\geq 6, t~{\rm even}.$}
\end{array}
\right.
\] 
This shows that $Y$ is unique.

We may further assume that $Z_{1}=\tau_{Q}^{s}~Z_{2}, s>0$. Then we have $\tau_{Q}^{2}Y\in \mathcal{W}(Z_{1})\backslash \mathcal{W}(Z_{2})$ and $Y\in \mathcal{W}(Z_{2})\backslash \mathcal{W}(Z_{1})$. Finally by Proposition \ref{CtoR} it follows that ${\rm dim}_{k}{\rm Hom}_{\mathcal{C}(Q)}(P_{t}, Y)=1$ and ${\rm dim}_{k}{\rm Hom}_{\mathcal{C}(Q)}(Y, P_{t})=1$. The proof of the lemma is completed.
\end{proof}

Let $\mathcal{T}$ be a weakly cluster-tilting subcategory of $\mathcal{C}(Q)$. Then we need more notions to determine whether an object is in $\mathcal{T}$ or not; compare [\ref{HJ}].

\begin{definition}\label{forbidden region}
Let $X$ be an object lying in the fundamental domain $\mathcal{F}(Q)$. Then we define the \emph{forbidden region} of $X$ as $H(X)=\{Y\in \mathcal{F}(Q)~|~{\rm Ext}_{\mathcal{C}(Q)}^{1}(Y, X)\neq 0\}$.
\end{definition}

Now we have the following criterion.

\begin{lemma}\label{for reg}
Let $\mathcal{T}$ be a weakly cluster-tilting subcategory of $\mathcal{C}(Q)$. Let $X_{1},X_{2},\cdots, X_{m}$ and $Y_{1},Y_{2},\cdots,Y_{n}$ be objects lying in the fundamental domain $\mathcal{F}(Q)$. If $X_{i}\in \mathcal{T}, i=1,2,\cdots,m$ and $Y_{j}\notin \mathcal{T}, j=1,2,\cdots,n$, then $H(Y_{j})\nsubseteq (\bigcup\limits_{i=1}^{m} H(X_{i}))\cup\{Y_{1}, Y_{2},\cdots, Y_{n}\}, 1\leq j\leq n$.
\end{lemma}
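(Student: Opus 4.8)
The plan is to argue by contradiction, exploiting the symmetry between $X_i$ and $Y_j$ that is forced by the weakly cluster-tilting hypothesis together with $2$-Calabi-Yau duality. Suppose the conclusion fails for some index $j$, i.e.\ $H(Y_j) \subseteq \left(\bigcup_{i=1}^m H(X_i)\right) \cup \{Y_1,\dots,Y_n\}$. The point of the forbidden region is that, by Definition \ref{forbidden region} and $2$-Calabi-Yau duality (${\rm Ext}^1_{\mathcal{C}(Q)}(Y,X) \cong D{\rm Ext}^1_{\mathcal{C}(Q)}(X,Y)$), we have $Y \in H(X)$ iff ${\rm Ext}^1_{\mathcal{C}(Q)}(Y,X) \neq 0$ iff ${\rm Ext}^1_{\mathcal{C}(Q)}(X,Y)\neq 0$, so $H$ is symmetric in the sense that $Y\in H(X) \Leftrightarrow X \in H(Y)$. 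Since $\mathcal{T}$ is weakly cluster-tilting, an object $Z\in\mathcal{F}(Q)$ lies in $\mathcal{T}$ exactly when ${\rm Ext}^1_{\mathcal{C}(Q)}(X_i,Z)=0$ for all $i$ \emph{and} ${\rm Ext}^1_{\mathcal{C}(Q)}(Z',Z)=0$ for all other objects $Z'\in\mathcal{T}$; equivalently, $Z\notin\mathcal{T}$ forces the existence of some object $W\in\mathcal{T}$ with ${\rm Ext}^1(W,Z)\neq 0$, i.e.\ $Z\in H(W)$, or $Z$ itself produces a nonzero self-extension-type obstruction.

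**Here is the key step.** Because $Y_j\notin\mathcal{T}$ and $\mathcal{T}$ is weakly cluster-tilting, there is an object $W\in\mathcal{T}$ with ${\rm Ext}^1_{\mathcal{C}(Q)}(\mathcal{T},Y_j)\neq 0$ witnessed through $W$, that is, $Y_j\in H(W)$, hence by symmetry $W\in H(Y_j)$. By the assumed inclusion, $W\in \bigcup_i H(X_i)$ or $W\in\{Y_1,\dots,Y_n\}$. The second case is impossible: $W\in\mathcal{T}$ but $Y_k\notin\mathcal{T}$ for all $k$. So $W\in H(X_i)$ for some $i$, i.e.\ ${\rm Ext}^1_{\mathcal{C}(Q)}(W,X_i)\neq 0$. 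But $W\in\mathcal{T}$ and $X_i\in\mathcal{T}$, and $\mathcal{T}$ being weakly cluster-tilting means ${\rm Ext}^1_{\mathcal{C}(Q)}(\mathcal{T},X_i)=0$ (since $X_i\in\mathcal{T}$) --- contradiction. One subtlety I need to handle carefully is the possibility $W=Y_j$ itself (a self-obstruction ${\rm Ext}^1(Y_j,Y_j)\neq 0$): in that case $Y_j\in H(Y_j)$, which is already on the right-hand side, so I must instead locate the witness $W$ inside $\mathcal{T}$ properly, using that $\mathcal{T}$ is genuinely additive and that the defining ``if and only if'' of weakly cluster-tilting pins down $\mathcal{T}$ as \emph{exactly} the objects with no $\mathcal{T}$-extensions; thus if $Y_j\notin\mathcal{T}$ the obstructing $W$ can be chosen in $\mathcal{T}$, not equal to any $Y_k$.

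**The main obstacle** I anticipate is being precise about what ``witnessed through $W\in\mathcal{T}$'' means when $\mathcal{T}$ has infinitely many indecomposables, as happens here: I must ensure that ${\rm Ext}^1_{\mathcal{C}(Q)}(\mathcal{T},Y_j)\neq 0$ really produces a single indecomposable $W\in\mathcal{T}$ with ${\rm Ext}^1(W,Y_j)\neq 0$, which follows since ${\rm Ext}^1_{\mathcal{C}(Q)}(\mathcal{T},Y_j)=\bigcup_{W\in\mathcal{T}\ \mathrm{indec.}}{\rm Ext}^1_{\mathcal{C}(Q)}(W,Y_j)$ is nonzero iff some summand is, as $\mathcal{C}(Q)$ is Krull--Schmidt and ${\rm Hom}$-finite (Remark \ref{CQ}(1)). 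With that pointwise reformulation in hand, the argument above closes cleanly; no case analysis on the shape of $\Gamma_{\mathcal{C}(Q)}$ is needed, only the symmetry of $H$ and the defining property of weakly cluster-tilting subcategories.
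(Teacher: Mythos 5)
Your argument is correct and is essentially the paper's own proof: assume the inclusion holds for some $j$, use the weakly cluster-tilting property to produce an indecomposable $W\in\mathcal{T}$ with ${\rm Ext}^1_{\mathcal{C}(Q)}(W,Y_j)\neq 0$, rule out $W\in\{Y_1,\dots,Y_n\}$ since $W\in\mathcal{T}$, and derive the contradiction ${\rm Ext}^1_{\mathcal{C}(Q)}(W,X_i)\neq 0$ with both $W,X_i\in\mathcal{T}$. The detours through $2$-Calabi-Yau symmetry of $H$ and the ``self-obstruction'' case are unnecessary (the witness $W$ lies in $\mathcal{T}$ by definition, hence $W\neq Y_j$, and ${\rm Ext}^1(W,Y_j)\neq0$ already says $W\in H(Y_j)$ directly), but they do no harm.
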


\begin{proof}
Suppose to the contrary that there exists $j$ such that $H(Y_{j})\subseteq (\bigcup\limits_{i=1}^{m} H(X_{i}))\cup\{Y_{1}, Y_{2},\cdots, Y_{n}\}$. Since $Y_{j}\notin \mathcal{T}$ and $\mathcal{T}$ is a weakly cluster-tilting category, by definition there exists an indecomposable object $Z\in \mathcal{T}$ such that ${\rm Ext}_{\mathcal{C}(Q)}^{1}(Z, Y_{j})\neq 0$. Thus $Z\in H(Y_{j})\subseteq (\bigcup\limits_{i=1}^{m} H(X_{i}))\cup\{Y_{1}, Y_{2},\cdots, Y_{n}\}$. Since $Y_{k}\notin \mathcal{T}, k=1,2,\cdots,n$, there exists $i$ such that $Z\in H(X_{i})$, i.e., ${\rm Ext}_{\mathcal{C}(Q)}^{1}(Z, X_{i})\neq 0$, which contradicts that both $Z$ and $X_{i}$ are in $\mathcal{T}$. The proof of the lemma is completed.
\end{proof}

The following notion will allow us to describe the forbidden region by the Auslander-Reiten quiver $\Gamma_{\mathcal{C}(Q)}$.

\begin{definition}\label{fb for region}
Let $X$ be an object lying in the connecting component $\mathcal{P}\sqcup \mathcal{I}[-1]$. Then we define the \emph{forward forbidden region} of $X$ as $H^{+}(X)=\{Y~|~Y$ is a successor, however, not a sectional successor of $X$ in $\mathcal{P}\sqcup \mathcal{I}[-1]\}$. That is, any path $p: X\rightsquigarrow Y$ is not sectional. Dually, we define the \emph{backward forbidden region} of $X$ as $H^{-}(X)=\{Y~|~Y$ is a predecessor, however, not a sectional predecessor of $X$ in $\mathcal{P}\sqcup \mathcal{I}[-1]\}$.
\end{definition}

\begin{proposition}\label{coincide}
Let $X$ be an object lying in the connecting component $\mathcal{P}\sqcup \mathcal{I}[-1]$. Let $X$ lies in the $\tau_{\mathcal{C}}$-orbit of $P_{t}, t\geq 0$.

$(1)$~ If $t=0,1$, say $X=A_{l}^{(k)}, l\geq 1, l~{\rm odd}, k\in \{0, 1\}$, then 
$$H(X)\cap (\mathcal{P}\sqcup \mathcal{I}[-1])=(H^{+}(X)\cup H^{-}(X))\backslash (\{A_{l+4}^{(k)}, A_{l+6}^{(k)},\cdots\}\cup\{\cdots, A_{4}^{(1-k)}[-1], A_{2}^{(1-k)}[-1], A_{1}^{(k)}, A_{3}^{(k)},\cdots, \tau_{\mathcal{C}}^{2}A_{l}^{(k)}\}).$$
In particular, $H(X)\cap (\mathcal{P}\sqcup \mathcal{I}[-1])\subseteq H^{+}(X)\cup H^{-}(X)$.

$(2)$~ If $t\geq 2$, then $H(X)\cap (\mathcal{P}\sqcup \mathcal{I}[-1])=H^{+}(X)\cup H^{-}(X).$
\end{proposition}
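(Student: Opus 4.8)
The plan is to compare the two descriptions of the forbidden region by using the explicit Auslander--Reiten structure recorded in Remark \ref{GammaQ} together with the $2$-Calabi--Yau formula $\mathrm{Ext}^{1}_{\mathcal{C}(Q)}(Y,X)\cong D\,\mathrm{Hom}_{\mathcal{C}(Q)}(X,\tau_{\mathcal{C}}Y)$ and Lemma \ref{uf}. First I would reduce to the case $X=P_{t}$, $t\geq 0$, since $\tau_{\mathcal{C}}$ acts on the connecting component as the shift and the forward/backward forbidden regions and $H(X)$ are all compatible with this reduction. Then, for $Y$ in the connecting component, I would unwind $\mathrm{Ext}^{1}_{\mathcal{C}(Q)}(Y,X)\cong D\,\mathrm{Hom}_{\mathcal{C}(Q)}(X,\tau_{\mathcal{C}}Y)\cong D\,\mathrm{Hom}_{\mathcal{C}(Q)}(P_{t},\tau_{D}Y)$, so that everything is governed by the $\mathrm{Hom}$-spaces $\mathrm{Hom}_{\mathcal{C}(Q)}(P_{t},-)$ computed (via Lemma \ref{uf}(2)) from $\mathrm{Hom}_{\mathrm{rep}(Q)}(P_{t},-)$ and the dimension data in Lemma \ref{PtoP} and Proposition \ref{CtoR}.

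The key computation is then purely combinatorial: $Y\in H(X)$ iff $\mathrm{Hom}_{D^{b}(\mathrm{rep}(Q))}(P_{t},\tau_{D}Y)\neq 0$ or $\mathrm{Hom}_{D^{b}(\mathrm{rep}(Q))}(\tau_{D}Y,\tau_{D}^{2}P_{t})\neq 0$ (the two summands of Lemma \ref{uf}(2)), and I must show this happens exactly when every path $X\rightsquigarrow Y$ (resp.\ $Y\rightsquigarrow X$) in $\mathcal{P}\sqcup\mathcal{I}[-1]$ is non-sectional, up to the explicitly listed exceptional objects in the $t=0,1$ case. Since the connecting component $\mathcal{P}\sqcup\mathcal{I}[-1]$ is standard, non-vanishing of $\mathrm{Hom}$ between two objects in it is equivalent to the existence of a path, and the dimension is controlled by how many ``mesh-independent'' paths there are; a path being sectional is precisely the situation where $\mathrm{Hom}$ into $\tau_{D}$ of the target vanishes while $\mathrm{Hom}$ into the target itself does not. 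Concretely I would split into $t=0,1$ and $t\geq 2$ exactly as in Lemma \ref{PtoP}: for $t\geq 2$, the shape of $\mathcal{PR}_{P_{t}}$ and the sets $S_{1},S_{2}$ from that lemma translate directly into $H^{+}(X)\cup H^{-}(X)$ with no exceptions, because the $\tau$-shift that defines $H(X)$ lands one step outside the sectional ``corner''; for $t=0,1$ the orbit of $A_{l}^{(k)}$ produces the boundary representations $A_{l'}^{(1-k)}$ (where, by Lemma \ref{PtoP}(1), $\mathrm{Hom}$ unexpectedly vanishes) and their preinjective-side counterparts $A_{2m}^{(1-k)}[-1]$, which must be excised — these are precisely the listed exceptional objects, and the self-overlap term $\tau_{\mathcal{C}}^{2}A_{l}^{(k)}$ back along the orbit.

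The main obstacle I anticipate is the bookkeeping in the $t=0,1$ case: one has to chase the $\mathrm{Ext}$ into $\tau_{\mathcal{C}}Y$ carefully across the ``seam'' where $\mathcal{P}$ meets $\mathcal{I}[-1]$ in the connecting component, keeping track of which representations $\tau_{D}Y$ or $\tau_{D}^{2}P_{t}$ fall out of $\mathrm{rep}(Q)$ into a shifted copy (so the relevant $\mathrm{Hom}$ vanishes for trivial degree reasons, as in the proof of Lemma \ref{uf}(3)), and identifying exactly the finitely many boundary objects where Lemma \ref{PtoP}(1) records a ``hole'' in the otherwise one-dimensional $\mathrm{Hom}$ pattern. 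I would handle this by drawing the relevant strip of $\Gamma_{\mathcal{C}(Q)}$ (as in Figure 4 / Example \ref{kdim}) and matching the region $\{Y:\mathrm{Ext}^{1}(Y,X)\neq0\}$ against $H^{+}(X)\cup H^{-}(X)$ object by object along the two rays and co-rays through $X$, and the two boundary $\tau$-orbits $\{A_{l}^{(0)}\}$ and $\{A_{l}^{(1)}\}$. Once the picture is set up the verification is routine case-checking against Remark \ref{GammaQ} and Lemma \ref{PtoP}, and the final containment $H(X)\cap(\mathcal{P}\sqcup\mathcal{I}[-1])\subseteq H^{+}(X)\cup H^{-}(X)$ in part (1) is then immediate from the displayed equality.
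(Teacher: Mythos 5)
Your proposal is correct and follows essentially the same route as the paper: reduce to $X=P_{t}$ using the $\tau_{\mathcal{C}}$-action, convert ${\rm Ext}^{1}_{\mathcal{C}(Q)}(Y,X)$ via the $2$-Calabi--Yau property and Lemma \ref{uf} into ${\rm Hom}_{{\rm rep}(Q)}(P_{t},\tau_{Q}Y)$ (resp.\ ${\rm Hom}_{{\rm rep}(Q)}(P_{t},Y[1])$ across the seam, with the degree-shift cases vanishing), then compare the resulting explicit lists from Lemma \ref{PtoP} with $H^{+}(X)\cup H^{-}(X)$, treating the boundary-orbit exceptions in the $t=0,1$ case exactly as the paper does.
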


\begin{proof}
$(1)$~ We shall consider only the case where $X=A_{1}^{(1)}=P_{0}$. Let $Y\in H(P_{0})$. Suppose first that $Y\in \mathcal{I}[-1]$. Then by Lemma \ref{uf} we have
$${\rm Ext}_{\mathcal{C}(Q)}^{1}(Y, P_{0}) \cong D {\rm Ext}_{\mathcal{C}(Q)}^{1}(P_{0}, Y) \cong  D {\rm Hom}_{\mathcal{C}(Q)}(P_{0}, Y[1])  \cong D {\rm Hom}_{{\rm rep}(Q)}(P_{0}, Y[1]).$$                                     

Hence ${\rm Ext}_{\mathcal{C}(Q)}^{1}(Y, P_{0})\neq 0$ implies that $Y[1]\in \{B_{i,j}~|~2\leq i<j, i,j~{\rm even}\}\cup\{A_{l'}^{(1)}~|~l'\geq 2, l'~{\rm even}\}$.

Suppose next that $Y\in \mathcal{P}$. We will consider ${\rm Ext}_{\mathcal{C}(Q)}^{1}(Y, P_{0})\cong D {\rm Hom}_{\mathcal{C}(Q)}(P_{0}, \tau_{\mathcal{C}}Y)$ again. If $\tau_{\mathcal{C}}Y=I_{j}[-1], j\in Q_{0}$, then by definition we have 
$$\begin{array}{rcl}
{\rm Hom}_{\mathcal{C}(Q)}(P_{0}, \tau_{\mathcal{C}}Y) & \cong & \bigoplus\limits_{k\in \mathbb{Z}}{\rm Hom}_{D^{b}({\rm rep}(Q))}(P_{0}, (\tau^{-1}[1])^{k}~I_{j}[-1]) \\
                                                                                          & \cong & \cdots \oplus {\rm Hom}_{D^{b}({\rm rep}(Q))}(P_{0}, I_{j}[-1])\oplus {\rm Hom}_{D^{b}({\rm rep}(Q))}(P_{0}, P_{j}[1])\oplus \cdots \\
                                                                                          & = & 0.
\end{array}$$

If both $Y$ and $\tau_{\mathcal{C}}Y$ are in $\mathcal{P}$, then by Lemma \ref{uf} again we have
$$\begin{array}{rcl}
{\rm Hom}_{\mathcal{C}(Q)}(P_{0}, \tau_{\mathcal{C}}Y) & \cong & {\rm Hom}_{D^{b}({\rm rep}(Q))}(P_{0},\tau_{D}Y)\oplus D {\rm Hom}_{D^{b}({\rm rep}(Q))}(Y, \tau_{D}P_{0}) \\
                                                                                          & \cong & {\rm Hom}_{{\rm rep}(Q)}(P_{0}, \tau_{Q}Y).
\end{array}$$

Hence ${\rm Ext}_{\mathcal{C}(Q)}^{1}(Y, P_{0})\neq 0$ implies that $\tau_{Q}Y\in \{B_{i,j}~|~1\leq i<j, i,j~{\rm odd}\}\cup\{A_{l'}^{(1)}~|~l'\geq 1, l'~{\rm odd}\}$.

In summary, we obtain that 
$$\begin{array}{rcl}
H(P_{0})\cap (\mathcal{P}\sqcup \mathcal{I}[-1]) & = & \{B_{i,j}[-1]~|~2\leq i<j, i,j~{\rm even}\}\cup \{A_{l'}^{(1)}[-1]~|~l'\geq 2, l'~{\rm even}\}~\cup \\
                                                                             &    & \{B_{i,j}~|~3\leq i<j, i,j~{\rm odd}\}\cup \{A_{l''}^{(0)}~|~l''\geq 3, l''~{\rm odd}\}.
\end{array}$$

On the other hand, it is easy to see that 
$$H^{+}(P_{0})=\{B_{i,j}~|~3\leq i<j, i,j~{\rm odd}\}\cup \{A_{l'}^{(k')}~|~l'\geq 5, l'~{\rm odd}, k'\in\{0,1\}\}\cup \{A_{3}^{(0)}\};$$
$$H^{-}(P_{0})=\{B_{i,j}[-1]~|~2\leq i<j, i,j~{\rm even}\}\cup \{A_{l''}^{(k'')}[-1]~|~l''\geq 4, l''~{\rm even}, k''\in\{0,1\}\}\cup\{A_{2}^{(1)}[-1]\}.$$

This establishes our claim.

$(2)$~ We shall consider only the case where $X=P_{t}$ and $t$ is odd. Let $Y\in H(P_{t})$. Suppose first that $Y\in \mathcal{I}[-1]$. Similarly we have ${\rm Ext}_{\mathcal{C}(Q)}^{1}(Y, P_{t})\cong D{\rm Hom}_{{\rm rep}(Q)}(P_{t}, Y[1])$. Hence ${\rm Ext}_{\mathcal{C}(Q)}^{1}(Y, P_{t})\neq 0$ implies that 
$$Y[1]\in \{B_{i,j}~|~i< t\leq j, i,j~{\rm even}\}\cup \{A_{k,l}~|~k\leq t\leq l, k,l~{\rm even}\}\cup \{A_{l'}^{(k')}~|~t\leq l', l'~{\rm even}, k'\in \{0,1\}\}.$$

Suppose next that $Y\in \mathcal{P}$. If $\tau_{\mathcal{C}}Y=I_{j}[-1], j\in Q_{0}$, then similarly we can show that ${\rm Ext}_{\mathcal{C}(Q)}^{1}(Y, P_{t})=0$. If both $Y$ and $\tau_{\mathcal{C}}Y$ are in $\mathcal{P}$, then ${\rm Ext}_{\mathcal{C}(Q)}^{1}(Y, P_{t})\cong D{\rm Hom}_{{\rm rep}(Q)}(P_{t}, \tau_{Q}Y)\neq 0$ implies that 
$$\tau_{Q}Y\in \{B_{i,j}~|~t\leq j, i,j~{\rm odd}\}\cup \{A_{k,l}~|~k\leq t\leq l, k,l~{\rm odd}\}\cup \{A_{l'}^{(k')}~|~t\leq l', l'~{\rm odd}, k'\in \{0,1\}\}.$$

In summary, we obtain that 
$$\begin{array}{rcl}
H(P_{t})\cap (\mathcal{P}\sqcup \mathcal{I}[-1]) & = & \{B_{i,j}[-1]~|~i< t\leq j, i,j~{\rm even}\}\cup \{A_{k,l}[-1]~|~k\leq t\leq l, k,l~{\rm even}\}~\cup \\
                                                                            &    & \{A_{l'}^{(k')}[-1]~|~t\leq l', l'~{\rm even}, k'\in \{0,1\}\}\cup \{B_{i,j}~|~3\leq i<j, j\geq t+2, i,j~{\rm odd}\}~\cup \\
                                                                            &     & \{B_{1,j}~|~j\geq t+2, j~{\rm odd}\}\cup \{A_{k,l}~|~k\leq t-2<t+2\leq l, k,l~{\rm odd}\} ~\cup \\
                                                                            &     & \{A_{l'}^{(k')}~|~t+2\leq l', l'~{\rm odd}, k'\in \{0,1\}\}.
\end{array}$$

Finally, it is easy to see that 
$$\begin{array}{rcl}
H^{+}(P_{t}) & = & \{B_{i,j}~|~i<j, j\geq t+2, i,j~{\rm odd}\}\cup \{A_{k,l}~|~k\leq t-2<t+2\leq l, k,l~{\rm odd}\}~\cup \\
                    &    & \{A_{l'}^{(k')}~|~t+2\leq l', l'~{\rm odd}, k'\in \{0,1\}\};
\end{array}$$
$$\begin{array}{rcl}
H^{-}(P_{t}) & = & \{B_{i,j}[-1]~|~i< t\leq j, i,j~{\rm even}\}\cup \{A_{k,l}[-1]~|~k\leq t\leq l, k,l~{\rm even}\}~\cup \\
                    &    & \{A_{l'}^{(k')}[-1]~|~t\leq l', l'~{\rm even}, k'\in \{0,1\}\}.
\end{array}$$

Therefore, $H(P_{t})\cap (\mathcal{P}\sqcup \mathcal{I}[-1])=H^{+}(P_{t})\cup H^{-}(P_{t})$. The proof of the proposition is completed.
\end{proof}

Let $X$ be an object lying in the fundamental domain $\mathcal{F}(Q)$. Then $X$ is called \emph{boundary object} if it has only one immediate predecessor in $\Gamma_{\mathcal{C}(Q)}$. Moreover, an object $U$ is called \emph{boundary predecessor} of $X$ if $U$ is an boundary object and there exists a unique sectional path $U\rightsquigarrow X$ in $\Gamma_{\mathcal{C}(Q)}$. Dually, an object $V$ is called \emph{boundary successor} of $X$ if $V$ is an boundary object and there exists a unique sectional path $X\rightsquigarrow V$ in $\Gamma_{\mathcal{C}(Q)}$. Let $X$ lies in the $\tau_{\mathcal{C}}$-orbit of $P_{t}$. If $t\geq 2$, then it is easy to see that $X$ has exactly two boundary predecessors, denoted by $U_{1}, U_{2}$ and exactly two boundary successors, denoted by $V_{1}, V_{2}$, respectively. The following observation is important for our investigation.

\begin{theorem}\label{in t}
Let $X$ be an object lying in the connecting component $\mathcal{P}\sqcup \mathcal{I}[-1]$ and in the $\tau_{\mathcal{C}}$-orbit of $P_{t}, t\geq 0$. Let $Y$ be an object lying in the regular component $\mathcal{R}$. Assume that $\mathcal{T}$ is a weakly cluster-tilting subcategory of $\mathcal{C}(Q)$ and $X, Y\in \mathcal{T}$. If ${\rm Hom}_{\mathcal{C}(Q)}(X, Y)\neq 0$ and ${\rm Hom}_{\mathcal{C}(Q)}(Y, X)\neq 0$, then at least one of the boundary predecessors or boundary successors of $X$ is in $\mathcal{T}$.
\end{theorem}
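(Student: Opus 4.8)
We are given $X$ in the $\tau_{\mathcal{C}}$-orbit of $P_t$ and $Y\in\mathcal{R}$ with both $\operatorname{Hom}_{\mathcal{C}(Q)}(X,Y)\neq 0$ and $\operatorname{Hom}_{\mathcal{C}(Q)}(Y,X)\neq 0$, both $X,Y\in\mathcal{T}$. The plan is to use Lemma~\ref{C det R} as the point of departure: since $X,Y\in\mathcal{T}$ we have $\operatorname{Ext}^1_{\mathcal{C}(Q)}(X,Y)=0$ (as $\mathcal{T}$ is weakly cluster-tilting, $\operatorname{Ext}^1(\mathcal{T},Y)=0$), so Lemma~\ref{C det R} applies and forces $t\geq 3$, pins down $Y$ uniquely in terms of $t$ (namely $Y=A_{2,2t-3}$ for $t=3,4$, $Y=B_{t-3,t}$ for $t\geq 5$ odd, $Y=B_{t-4,t+1}$ for $t\geq 6$ even), and gives $\dim\operatorname{Hom}_{\mathcal{C}(Q)}(X,Y)=\dim\operatorname{Hom}_{\mathcal{C}(Q)}(Y,X)=1$. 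So the real content is: for this very specific configuration, at least one of $U_1,U_2,V_1,V_2$ must lie in $\mathcal{T}$.

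**Reduction via the forbidden-region criterion.** The tool I would use is Lemma~\ref{for reg}, applied in contrapositive form. Suppose, for contradiction, that none of $U_1,U_2,V_1,V_2$ is in $\mathcal{T}$. We must produce a contradiction. The idea is that $H(Y)$ — the forbidden region of $Y$ inside $\mathcal{F}(Q)$ — should then be "covered'' by $H(X)$ together with the four forbidden objects $U_1,U_2,V_1,V_2$ (and possibly by $H$ of a few other already-excluded objects), which would violate Lemma~\ref{for reg}. So the key computation is an explicit description of $H(Y)$ for $Y$ a regular representation of the special form above, and a verification that
$$H(Y)\ \subseteq\ H(X)\cup\{U_1,U_2,V_1,V_2\}\quad\text{(possibly union a few more excluded objects).}$$
For the part of $H(Y)$ lying in $\mathcal{R}$: since $Y$ is regular, $\operatorname{Ext}^1_{\mathcal{C}(Q)}(Z,Y)\neq 0$ for $Z\in\mathcal{R}$ means $\operatorname{Hom}_{\mathcal{C}(Q)}(Z,\tau_{\mathcal{C}}^{-1}Y)\neq 0$, and using Lemma~\ref{uf}(2) and Lemma~\ref{RtoR} this describes a rectangle (or union of rectangles) in $\mathcal{R}$ around $Y$; I would show this region already lies in $H(X)$, essentially because $X$ maps both ways to $Y$ so $X$'s forbidden region reaches far enough into $\mathcal{R}$. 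For the part of $H(Y)$ in $\mathcal{P}\sqcup\mathcal{I}[-1]$: compute $\operatorname{Ext}^1_{\mathcal{C}(Q)}(Z,Y)$ for $Z$ in the connecting component via Lemma~\ref{uf}(3) and Proposition~\ref{CtoR}, getting a description in terms of the wings $\mathcal{W}(Z_1),\mathcal{W}(Z_2)$ attached to $X$; then compare with $H(X)\cap(\mathcal{P}\sqcup\mathcal{I}[-1])$ as computed in Proposition~\ref{coincide}, and check that the only objects of $H(Y)$ not already in $H(X)$ are precisely among $U_1,U_2,V_1,V_2$.

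**The main obstacle.** The hard part will be the bookkeeping in the last step: showing that the "gap'' between $H(Y)$ and $H(X)$ in the connecting component consists of exactly the boundary predecessors/successors of $X$ and nothing else. This requires matching the wing description from Proposition~\ref{CtoR} (the $Z_1,Z_2$ attached to $P_t$, with $Z_1=\tau_Q^s Z_2$) against the $\tau_{\mathcal C}$-shifted wings governing $\operatorname{Ext}^1(-,Y)$, keeping careful track of parity of $t$ and of the $[-1]$-shift separating $\mathcal{P}$ from $\mathcal{I}[-1]$ inside $\mathcal{F}(Q)$. I would organize this by the three cases $t=3,4$ (where $Y=A_{2,2t-3}$), $t\geq 5$ odd, $t\geq 6$ even, doing one case in full and indicating that the others are analogous. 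Once the inclusion $H(Y)\subseteq H(X)\cup\{U_1,U_2,V_1,V_2\}\cup\{Y\}$ is verified, Lemma~\ref{for reg} (with the $X_i$'s being $X$ and the $Y_j$'s being $Y,U_1,U_2,V_1,V_2$) yields the contradiction $H(Y)\nsubseteq(\text{that set})$, completing the proof.
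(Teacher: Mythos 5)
Your reduction via Lemma \ref{C det R} (forcing $t\geq 3$ and pinning down $Y$) is exactly right, and the idea of a forbidden-region covering argument combined with Lemma \ref{for reg} is also the paper's mechanism; but you aim the covering at the wrong object, and this breaks the proof. In your final step you apply Lemma \ref{for reg} with $Y$ listed among the $Y_{j}$'s, yet the hypothesis of that lemma is $Y_{j}\notin\mathcal{T}$, while here $Y\in\mathcal{T}$ by assumption. More fundamentally, no covering statement about $H(Y)$ for an object $Y\in\mathcal{T}$ can ever yield a contradiction: the weakly cluster-tilting property only guarantees that every object \emph{outside} $\mathcal{T}$ receives a nonzero ${\rm Ext}^{1}$ from some object of $\mathcal{T}$; for $Y\in\mathcal{T}$ we already know that no object of $\mathcal{T}$ lies in $H(Y)$, so an inclusion $H(Y)\subseteq H(X)\cup\{U_{1},U_{2},V_{1},V_{2},Y\}$ would be perfectly consistent with everything and forces nothing. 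On top of that, the inclusion you propose to verify is simply false: take $t\geq 5$ odd, so $Y=B_{t-3,t}$; then ${\rm Ext}^{1}_{\mathcal{C}(Q)}(P_{0},Y)\cong {\rm Hom}_{{\rm rep}(Q)}(P_{0},\tau_{Q}Y)={\rm Hom}_{{\rm rep}(Q)}(P_{0},B_{t-2,t-1})\neq 0$ by Proposition \ref{CtoR}, so $P_{0}\in H(Y)$, while $P_{0}$ lies in neither $H(P_{t})$ (by Proposition \ref{coincide} this meets the connecting component only in $H^{+}(P_{t})\cup H^{-}(P_{t})$, and $P_{0}$ is neither a successor nor a predecessor of $P_{t}$ there) nor in $\{U_{1},U_{2},V_{1},V_{2},Y\}$. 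So the bookkeeping you plan to do cannot succeed as stated.

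The correct direction, and what the paper does, is to cover the forbidden region of one of the \emph{excluded} objects: assuming none of $U_{1},U_{2},V_{1},V_{2}$ is in $\mathcal{T}$, one proves $H(V_{1})\subseteq H(X)\cup H(Y)\cup\{U_{1},U_{2},V_{1},V_{2}\}$ (note that $H(Y)$ appears as part of the \emph{covering} set, which is precisely where the hypothesis $Y\in\mathcal{T}$ and the explicit identification of $Y$ from Lemma \ref{C det R} are used). Then Lemma \ref{for reg}, applied with $X_{1}=X$, $X_{2}=Y$ in $\mathcal{T}$ and the four boundary objects as the $Y_{j}$'s, gives the contradiction: since $V_{1}\notin\mathcal{T}$, some $Z\in\mathcal{T}$ has ${\rm Ext}^{1}_{\mathcal{C}(Q)}(Z,V_{1})\neq 0$, but the inclusion forces $Z\in H(X)\cup H(Y)$ or $Z\in\{U_{1},U_{2},V_{1},V_{2}\}$, both impossible. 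The substantial computation is then the description of $H(V_{1})\setminus H(X)$ (a union of explicit sets $S_{1},\dots,S_{7}$ in the paper) and the verification that it lies in $H(Y)\cup\{U_{1},U_{2}\}$; your proposal contains no analogue of this step, so as written there is a genuine gap rather than a variant proof.
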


\begin{proof}
By Lemma \ref{C det R} it follows that $t\geq 3$ and hence $X$ has two boundary predecessors $U_{1}, U_{2}$ and two boundary successors $V_{1}, V_{2}$ in $\mathcal{P}\sqcup \mathcal{I}[-1]$. Suppose to the contrary that none of the object in $\{U_{1}, U_{2}, V_{1}, V_{2}\}$ is in $\mathcal{T}$. We claim that $H(V_{1})\subseteq H(X)\cup H(Y)\cup\{U_{1}, U_{2}, V_{1}, V_{2}\}$. Then by Lemma \ref{for reg} we obtain our desired contradiction.

We shall consider only the case where $X=P_{t}$, $t$ is odd and $V_{1}=A_{t}^{(0)}$. Since 
$${\rm Ext}_{\mathcal{C}(Q)}^{1}(Z, A_{t}^{(0)})\cong D{\rm Hom}_{{\rm rep}(Q)}(A_{t}^{(0)}, \tau_{Q}Z)~~{\rm and}~~{\rm Ext}_{\mathcal{C}(Q)}^{1}(Z, P_{t})\cong D{\rm Hom}_{{\rm rep}(Q)}(P_{t}, \tau_{Q}Z)$$
for any $Z\in \mathcal{R}$, by Proposition \ref{CtoR} and Proposition \ref{coincide} it follows that 
$$H(A_{t}^{(0)})\subseteq H^{+}(A_{t}^{(0)})\cup H^{-}(A_{t}^{(0)})\cup \mathcal{W}(A_{t+1, t+2});$$
$$H(P_{t})=H^{+}(P_{t})\cup H^{-}(P_{t})\cup \mathcal{W}(\tau_{Q}^{-1}A_{t,t+1})\cup \mathcal{W}(A_{t+1,t+2}).$$

It is easy to see that $H^{+}(A_{t}^{(0)})\subseteq H^{+}(P_{t})$ and
$$\begin{array}{rcl}
H^{-}(A_{t}^{(0)}) & = & \{A_{l}^{(k)}~|~1\leq l\leq t-4, l~{\rm odd}, k\in \{0,1\}\}\cup \{A_{t-2}^{(1)}\}\cup \{A_{k,l}~|~k\leq l<t, k,l~{\rm odd}\}~\cup\\
                            &    & \{B_{i,j}~|~i<j<t, i,j~{\rm odd}\}\cup \{A_{k',l'}[-1]~|~k'<t, k'\leq l', k',l'~{\rm even}\}~\cup \\
                            &    & \{B_{i',j'}[-1]~|~i'<j', i',j'~{\rm even}\}\cup \{A_{l'}^{(k')}[-1]~|~l'~{\rm even}, k'\in \{0,1\}\},
\end{array}$$
$$\begin{array}{rcl}
H^{-}(P_{t}) & = & \{A_{k,l}[-1]~|~k<t<l, k,l~{\rm even}\}\cup \{B_{i,j}[-1]~|~t<j, i,j~{\rm even}\}~ \cup \\
                   &    & \{A_{l'}^{(k')}[-1]~|~l'>t, l'~{\rm even}, k'\in \{0,1\}\}.
\end{array}$$

Thus if $Z\in H(A_{t}^{(0)})\backslash H(P_{t})$, then $Z\in S_{1}\cup S_{2}\cup \cdots \cup S_{7}$, where
$$S_{1}=\{A_{l}^{(k)}~|~1\leq l\leq t-4, l~{\rm odd}, k\in \{0,1\}\},~~S_{2}=\{A_{t-2}^{(1)}\},$$
$$S_{3}=\{A_{k,l}~|~k\leq l<t, k,l~{\rm odd}\},~~S_{4}=\{B_{i,j}~|~i<j<t, i,j~{\rm odd}\},$$
$$S_{5}=\{A_{k',l'}[-1]~|~k'\leq l'\leq t, k',l'~{\rm even}\},~~S_{6}=\{B_{i',j'}[-1]~|~i'<j'\leq t, i',j'~{\rm even}\},$$
$$S_{7}=\{A_{l'}^{(k')}[-1]~|~l'\leq t, l'~{\rm even}, k'\in \{0,1\}\}.$$

Suppose first that $t=3$. As seen in the proof of Lemma \ref{C det R}, $Y=A_{2,3}$. Let $Z\in H(A_{3}^{(0)})\backslash H(P_{3})$. Then $S_{1}=S_{3}=S_{4}=S_{6}=\emptyset, S_{2}=\{A_{1}^{(1)}\}, S_{5}=\{A_{2,2}[-1]\}$ and $S_{7}=\{A_{2}^{(0)}[-1],A_{2}^{(1)}[-1]\}$. Since 
$${\rm Ext}_{\mathcal{C}(Q)}^{1}(A_{1}^{(1)}, A_{2,3})\cong {\rm Hom}_{{\rm rep}(Q)}(A_{1}^{(1)}, B_{1,2})\neq 0,$$
$${\rm Ext}_{\mathcal{C}(Q)}^{1}(A_{2,2}[-1], A_{2,3})\cong D{\rm Hom}_{{\rm rep}(Q)}(A_{2,3}, A_{2,2})\neq 0$$
and $S_{7}=\{A_{2}^{(0)}[-1],A_{2}^{(1)}[-1]\}=\{U_{1},U_{2}\}$, it follows that $H(A_{3}^{(0)})\backslash H(P_{3})\subseteq H(A_{2,3})\cup \{A_{2}^{(0)}[-1],A_{2}^{(1)}[-1]\}$ and hence 
$$H(A_{3}^{(0)})\subseteq H(P_{3})\cup H(A_{2,3})\cup \{A_{2}^{(0)}[-1],A_{2}^{(1)}[-1], A_{3}^{(0)}, A_{3}^{(1)}\}.$$ 

Our claim is established.

Suppose now that $t\geq 5$. As seen in the proof of Lemma \ref{C det R}, $Y=B_{t-3,t}$ and it is easy to see that 
$$\tau_{Q}^{\mathbb{Z}}B_{t-3,t}=\{\cdots,A_{5,2t},A_{3,2t-2}, B_{1,2t-4}, B_{3,2t-6}, \cdots, B_{t-2,t-1}, B_{t-3,t}, B_{t-5,t+2},\cdots, B_{2,2t-5},A_{2,2t-3},A_{4,2t-1},\cdots\}.$$ 

Let $Z_{1}\in S_{1}\cup S_{2}\cup S_{3}\cup S_{4}$. Then it is easy to see that $Z_{1}$ can be written as $\tau_{\mathcal{C}}^{-s}P_{r}, r\in Q_{0}$, where $0\leq r\leq t-2$ and $0\leq s\leq \frac{t-r-2}{2}$. By Lemma \ref{uf} we have
$${\rm Ext}_{\mathcal{C}(Q)}^{1}(Z_{1}, B_{t-3, t})\cong {\rm Hom}_{{\rm rep}(Q)}(Z_{1}, \tau_{Q}B_{t-3, t})\cong {\rm Hom}_{{\rm rep}(Q)}(P_{r}, \tau_{Q}^{s}B_{t-2,t-1}).$$

Observe that $t-2-2s\geq r\geq 0$ and $t$ is odd. Thus $t-2-2s\geq 1$ and hence $\tau_{Q}^{s}B_{t-2,t-1}\cong B_{t-2-2s, t-1+2s}$. Since $t-1+2s>t-2-2s\geq r$, it follows that 
$${\rm Ext}_{\mathcal{C}(Q)}^{1}(Z_{1}, B_{t-3, t})\cong {\rm Hom}_{{\rm rep}(Q)}(P_{r}, B_{t-2-2s, t-1+2s})\neq 0$$

Therefore, $S_{1}\cup S_{2}\cup S_{3}\cup S_{4}\subseteq H(B_{t-3,t})$.

On the other hand, let $Z_{2}\in S_{5}\cup S_{6}\cup S_{7}$. Then $Z_{2}$ can be written as $\tau_{\mathcal{C}}^{s
}I_{r}[-1], r\in Q_{0}$, where $r=0, 0\leq s\leq \frac{t-3}{2}$ and $0<r\leq t-1$ and $0\leq s\leq \frac{t-r-1}{2}$. Now by Lemma \ref{uf} again we have 
$${\rm Ext}_{\mathcal{C}(Q)}^{1}(Z_{2}, B_{t-3, t})\cong D{\rm Hom}_{\mathcal{C}(Q)}(B_{t-3,t}, \tau_{\mathcal{C}}^{s
}I_{r})\cong D{\rm Hom}_{{\rm rep}(Q)}(\tau_{Q}^{-s}B_{t-3,t}, I_{r}).$$

Let $r=0$. Then $t-3-2s\geq 0$, $t$ is odd and hence $t-3-2s$ is even. If $t-3-2s=0$, then $Z_{2}=\tau_{\mathcal{C}}^{\frac{t-3}{2}}I_{0}[-1]\in \{\tau_{\mathcal{C}}^{\frac{t-3}{2}}I_{0}[-1], \tau_{\mathcal{C}}^{\frac{t-3}{2}}I_{1}[-1]\}$. If $t-3-2s\geq 2$, then $\tau_{Q}^{-s}B_{t-3,t}=B_{t-3-2s, t+2s}$ and ${\rm Ext}_{\mathcal{C}(Q)}^{1}(Z_{2}, B_{t-3, t})\cong D{\rm Hom}_{{\rm rep}(Q)}(B_{t-3-2s, t+2s}, I_{0})\neq 0$.

Next let $r>0$. Then $t-1-2s\geq r>0$, $t$ is odd and hence $t-1-2s\geq 2$ is even. If $t-1-2s\geq 4$, then $\tau_{Q}^{-s}B_{t-3,t}=B_{t-3-2s, t+2s}$ and $t+2s>t-1-2s\geq r$. It follows that ${\rm Ext}_{\mathcal{C}(Q)}^{1}(Z_{2}, B_{t-3, t})\cong D{\rm Hom}_{{\rm rep}(Q)}(B_{t-3-2s, t+2s}, I_{r})\neq 0$. If $t-1-2s=2$, then $\tau_{Q}^{-s}B_{t-3,t}=A_{2,2t-3}$ and hence ${\rm Ext}_{\mathcal{C}(Q)}^{1}(Z_{2}, B_{t-3, t})\cong D{\rm Hom}_{{\rm rep}(Q)}(A_{2,2t-3}, I_{r})$. In the first case where $r=1$, $Z_{2}=\tau_{\mathcal{C}}^{\frac{t-3}{2}}I_{1}[-1]\in \{\tau_{\mathcal{C}}^{\frac{t-3}{2}}I_{0}[-1], \tau_{\mathcal{C}}^{\frac{t-3}{2}}I_{1}[-1]\}$. In the second case where $r\geq 2$, we have $2\leq r\leq t-1-2s=2$ and hence $r=2< 2t-3$. It follows that ${\rm Ext}_{\mathcal{C}(Q)}^{1}(Z_{2}, B_{t-3, t})\cong D{\rm Hom}_{{\rm rep}(Q)}(A_{2,2t-3}, I_{r})\neq 0$. 

Therefore, $S_{5}\cup S_{6}\cup S_{7}\subseteq H(B_{t-3,t})\cup \{\tau_{\mathcal{C}}^{\frac{t-3}{2}}I_{0}[-1], \tau_{\mathcal{C}}^{\frac{t-3}{2}}I_{1}[-1]\}$.

Finally, for any $Z\in H(A_{t}^{(0)})\backslash H(P_{t})$, $Z\in S_{1}\cup S_{2}\cup \cdots \cup S_{7}\subseteq H(B_{t-3,t})\cup \{\tau_{\mathcal{C}}^{\frac{t-3}{2}}I_{0}[-1], \tau_{\mathcal{C}}^{\frac{t-3}{2}}I_{1}[-1]\}$. Thus,
$$H(A_{t}^{(0)})\subseteq H(P_{t})\cup H(B_{t-3,t})\cup \{\tau_{\mathcal{C}}^{\frac{t-3}{2}}I_{0}[-1], \tau_{\mathcal{C}}^{\frac{t-3}{2}}I_{1}[-1],A_{t}^{(0)},A_{t}^{(1)}\}.$$
The proof of the theorem is completed.
\end{proof}

We illustrate Theorem \ref{in t} with the following example. 

\begin{example}\label{S1-S7}
Let $X=P_{5}=A_{5,5}$. Then the distribution of the sets $S_{1}, S_{2},\cdots, S_{7}$ in the proof of {\rm Theorem \ref{in t}} is shown as follows.

\begin{figure}[h] \centering

  \includegraphics*[90,113][520,380]{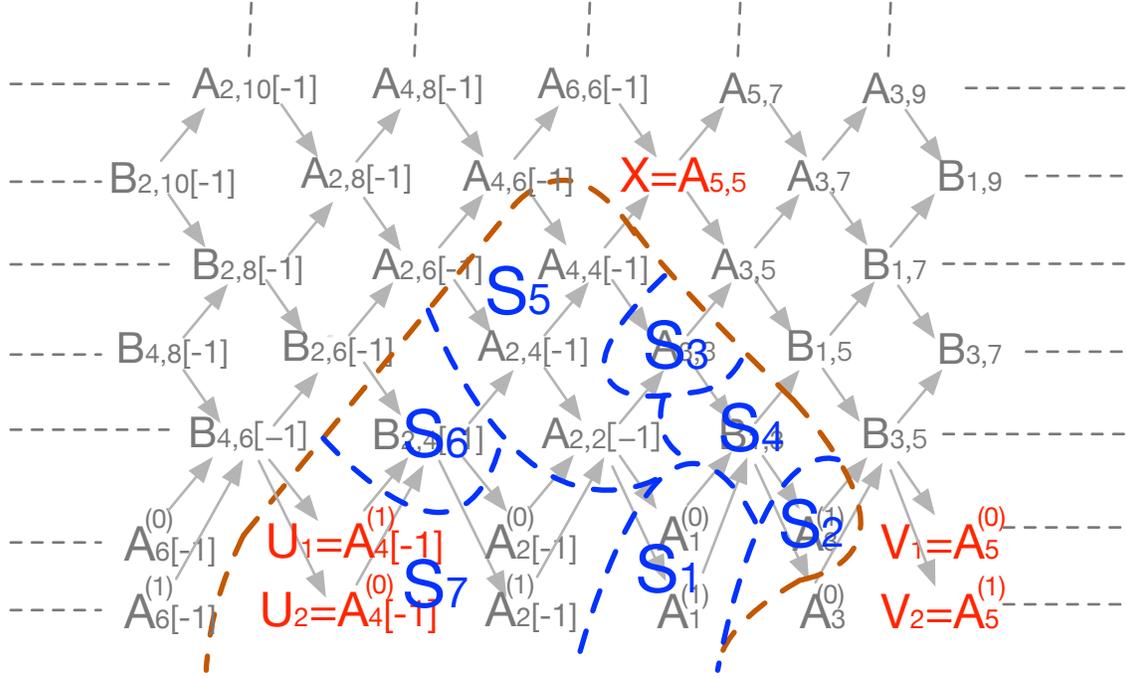}

  \caption{The distribution of $S_{1}, S_{2},\cdots, S_{7}$ in {\rm Theorem} \ref{in t} when $X=P_{5}$}
\end{figure}
\end{example}

Let $X, Y$ be two representations lying in $\Gamma_{{\rm rep}(Q)}$ and $f: X\rightarrow Y$ be a morphism in ${\rm Hom}_{{\rm rep}(Q)}(X, Y)$. For the rest of this paper, we shall denote by $f_{D}=\iota(f)$ and $f_{\mathcal{C}}=\pi\circ \iota(f)$, where $\iota: {\rm rep}(Q)\rightarrow D^{b}({\rm rep}(Q))$ is the natural embedding functor and $\pi: D^{b}({\rm rep}(Q))\rightarrow \mathcal{C}(Q)$ is the canonical projection functor.

Next we shall prove that in the above setting, any morphism in ${\rm Hom}_{\mathcal{C}(Q)}(Y, X)$ will factor through the boundary predecessor $U_{i}$ of $X$ and any morphism in ${\rm Hom}_{\mathcal{C}(Q)}(X, Y)$ will factor through the boundary successor $V_{i}$ of $X$, $i=1,2$. 

\begin{lemma}\label{f t}
Let $X$ be an object lying in the connecting component $\mathcal{P}\sqcup \mathcal{I}[-1]$ and in the $\tau_{\mathcal{C}}$-orbit of $P_{t}, t\geq 0$. Let $Y$ be an object lying in the regular component $\mathcal{R}$. Assume that ${\rm Ext}_{\mathcal{C}(Q)}^{1}(X,Y)=0$. If ${\rm Hom}_{\mathcal{C}(Q)}(X, Y)\neq 0$ and ${\rm Hom}_{\mathcal{C}(Q)}(Y, X)\neq 0$, then 

$(1)$~ $X$ has two boundary predecessors $U_{1}, U_{2}$ and two boundary successors $V_{1}, V_{2}$ in the connecting component $\mathcal{P}\sqcup \mathcal{I}[-1]$.

$(2)$~ Each morphism $X\rightarrow Y$ factors through $V_{i}$ in $\mathcal{C}(Q)$, $i=1,2$.

$(3)$~ Each morphism $Y\rightarrow X$ factors through $U_{i}$ in $\mathcal{C}(Q)$, $i=1,2$.
\end{lemma}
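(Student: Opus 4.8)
Lemma \ref{f t} is the factorization statement that underpins the later mutation arguments, so the plan is to reduce everything to the structure already established. Part (1) is immediate from Lemma \ref{C det R}: the hypotheses ${\rm Hom}_{\mathcal{C}(Q)}(X,Y)\neq 0$ and ${\rm Hom}_{\mathcal{C}(Q)}(Y,X)\neq 0$ together with ${\rm Ext}^{1}_{\mathcal{C}(Q)}(X,Y)=0$ force $t\geq 3$, and an object in the $\tau_{\mathcal{C}}$-orbit of $P_{t}$ with $t\geq 2$ has exactly two boundary predecessors and two boundary successors in $\mathcal{P}\sqcup\mathcal{I}[-1]$, as observed just before Theorem \ref{in t}.

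For part (2), I would first normalise: since $\tau_{\mathcal{C}}$ is the shift and $\mathcal{R}$ is $\tau_{Q}$-stable, I may assume $X=P_{t}$ and work inside ${\rm rep}(Q)$ using Lemma \ref{uf}(3), which gives ${\rm Hom}_{\mathcal{C}(Q)}(P_{t},Y)\cong{\rm Hom}_{{\rm rep}(Q)}(P_{t},Y)$ since $Y\in\mathcal{R}$. From Lemma \ref{C det R} (and its proof) the object $Y$ is one of $A_{2,2t-3}$, $B_{t-3,t}$, $B_{t-4,t+1}$, and the two relevant quasi-simples $Z_1,Z_2$ from Proposition \ref{CtoR} were identified explicitly (for $t$ odd they are $A_{t,t+1}$ and $A_{t-1,t}$). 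The key point is that ${\rm dim}_k{\rm Hom}_{{\rm rep}(Q)}(P_t,Y)=1$ and the two boundary successors $V_1,V_2$ of $P_t$ are precisely the representations sitting on the two rays emanating from $P_t$; by Proposition \ref{CtoR}, ${\rm Hom}_{{\rm rep}(Q)}(P_t,V_i)\neq 0$ and ${\rm Hom}_{{\rm rep}(Q)}(V_i,Y)\neq 0$ for each $i$ (one checks $V_1,V_2\in\mathcal{W}(Z_1)\cap\mathcal{W}(Z_2)$-adjacent data, or directly that $Y$ is a successor of each $V_i$ via Lemma \ref{PtoP}/\ref{RtoR}). Because $\mathcal{P}$ is standard and $\mathcal{R}$ is standard, a nonzero composite $P_t\to V_i\to Y$ is forced to be nonzero (mesh-category computation: the composite of the irreducible path along a ray with a path in $\mathcal{R}$ does not vanish, as no mesh relation intervenes along a sectional path), and since ${\rm Hom}$-spaces are one-dimensional at each stage, any morphism $P_t\to Y$ is a scalar multiple of this composite, hence factors through $V_i$. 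I would phrase this via the explicit basis elements $f_{\mathcal{C}}$, $g_{\mathcal{C}}$ attached to the sectional paths.

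Part (3) is the dual statement but with a twist, because ${\rm Hom}_{\mathcal{C}(Q)}(Y,P_t)\cong D{\rm Hom}_{{\rm rep}(Q)}(P_t,\tau_Q^2 Y)$ by Lemma \ref{uf}(2) — it comes from the ``derived'' summand, not from ${\rm rep}(Q)$ directly. So I would instead use the $2$-Calabi-Yau / Serre duality form: a morphism $Y\to X$ in $\mathcal{C}(Q)$ corresponds to a morphism $X\to\tau_{\mathcal{C}}Y=Y[1]$ in $D^b({\rm rep}(Q))$ up to the duality, equivalently to an element of ${\rm Hom}_{{\rm rep}(Q)}(P_t,\tau_Q^2 Y)$. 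Since $U_1,U_2$ are the boundary predecessors of $P_t$, they sit on the two co-rays into $P_t$, so $\tau_Q^{-2}U_i$ (or the appropriate shift) are boundary successors, and the factorization $Y\to U_i\to X$ dualizes to $P_t\to\tau_Q^2 Y$ factoring through the corresponding boundary object — this is exactly the part (2) statement applied to $\tau_Q^2 Y$ in place of $Y$, after checking ${\rm Ext}^1$ vanishes there too (which it does, since $\tau_Q Y$ is strictly between $\tau_Q^2 Y$ and $Y$ in the rectangle and Lemma \ref{C det R}'s proof shows $\tau_QY\notin\mathcal{W}(Z_1)\cup\mathcal{W}(Z_2)$). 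I expect the main obstacle to be bookkeeping: tracking exactly which boundary object $U_i$ corresponds to which $V_j$ under the $\tau_{\mathcal{C}}$-shift and making the non-vanishing of the composite rigorous via the standardness of the components rather than hand-waving ``sectional paths compose to nonzero.'' The cleanest route is to do the odd-$t$ case $X=P_t$ in full (with $Y$ as pinned down in Lemma \ref{C det R}), exhibit the four boundary objects as the specific representations on the rays/co-rays, and invoke Lemma \ref{RtoR} and Lemma \ref{PtoP} to certify each intermediate ${\rm Hom}$ is one-dimensional and each composite nonzero; the even-$t$ case is entirely analogous.
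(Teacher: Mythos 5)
Your part (1) matches the paper, and your overall plan for part (2) (normalise to $X=P_t$, pin down $Y$ via Lemma \ref{C det R}, use that all the relevant Hom-spaces are one-dimensional, and exhibit a non-zero composite $P_t\to V_i\to Y$) is the paper's plan as well. The genuine gap in part (2) is the non-vanishing of the composite: you justify it by a mesh-category/standardness argument (``no mesh relation intervenes along a sectional path''), but the morphism $V_i\to Y$ goes from the preprojective component to the regular component, and standardness of $\mathcal{P}$ and of $\mathcal{R}$ separately says nothing about composites that cross components; a composite of two non-zero maps can perfectly well vanish here, and nothing you cite rules that out. The paper closes exactly this point by writing down the representation morphisms $f\colon P_t\to A_t^{(0)}$ and $g\colon A_t^{(0)}\to Y$ vertex by vertex (with explicit matrices for $Y=B_{t-3,t}$) and checking $(g\circ f)_t={\rm id}_k\neq 0$, then transporting the non-vanishing to $\mathcal{C}(Q)$ via faithfulness of $\iota$ and $\pi$. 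Your argument needs such a computation (or some substitute) and does not contain one.

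Part (3) has a more serious problem. You propose to ``dualize'' part (2): since ${\rm Hom}_{\mathcal{C}(Q)}(Y,P_t)\cong D\,{\rm Hom}_{{\rm rep}(Q)}(P_t,\tau_Q^2Y)$, you claim that a factorization of every $X\to\tau_Q^2Y$ through a boundary successor yields a factorization of every $Y\to X$ through $U_i$. Serre duality is an isomorphism of Hom-spaces; it does not transport factorization statements. Unwinding the duality, ``every $Y\to X$ factors through $U_i$'' (with $\dim{\rm Hom}(Y,X)=1$) amounts to non-vanishing of the composition pairing ${\rm Hom}(U_i,X)\times{\rm Hom}(Y,U_i)\to{\rm Hom}(Y,X)$, which under naturality of the duality becomes a statement about composites $U_i\to X\to\tau_{\mathcal{C}}^2Y$ -- not the statement of part (2) applied to $\tau_Q^2Y$ -- and again requires an explicit non-vanishing check across components. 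Moreover, in the orbit category the morphisms $Y\to U_i$ and $U_i\to X$ live in different $F$-degrees, so their composite must be computed with the rule $h_i=\sum_{p+q=i}F^p(g_q)\circ f_p$; your sketch never addresses this. The paper avoids both issues by choosing the representative $X=\tau_{\mathcal{C}}^2P_t$ in the $\tau_{\mathcal{C}}$-orbit, so that $F(X)=I_t$ and $F(A_{t+3}^{(0)}[-1])=A_{t+1}^{(1)}$, constructing explicit representation morphisms $Y\to A_{t+1}^{(1)}\to I_t$ with non-zero composite, and then assembling $u,v$ in the appropriate $F$-degrees and verifying $(v\circ u)_1=F(v_0)\circ u_1\neq 0$. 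As it stands, your part (3) is not a proof, and your part (2) is missing its key verification.
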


\begin{proof}
By Lemma \ref{C det R} we have $t\geq 3$ and hence Statement $(1)$ follows at once. For Statement $(2)$, we shall consider only the case where $X=P_{t}$, $t\geq 3$, $t$ is odd and $V_{1}=A_{t}^{(0)}$. By Lemma \ref{C det R}, ${\rm dim}_{k}{\rm Hom}_{\mathcal{C}(Q)}(P_{t}, Y)=1$. Thus in order to show that each morphism $P_{t}\rightarrow Y$ factors through $A_{t}^{(0)}$, it suffices to find two morphisms in ${\rm Hom}_{\mathcal{C}(Q)}(P_{t}, A_{t}^{(0)})$ and ${\rm Hom}_{\mathcal{C}(Q)}(A_{t}^{(0)},Y)$ such that the composition is non-zero in ${\rm Hom}_{\mathcal{C}(Q)}(P_{t}, Y)$.

Define $f=(f_{i})_{i\geq 0}\in {\rm Hom}_{{\rm rep}(Q)}(P_{t}, A_{t}^{(0)})$, where $f_{t}={\rm id}_{k}$ and $f_{s}=0, s\neq t$. Next we define $g=(g_{i})_{i\geq 0}\in {\rm Hom}_{{\rm rep}(Q)}(A_{t}^{(0)}, Y)$ as follows. If $t=3$, then by Lemma \ref{C det R} we have $Y=A_{2,3}$. Thus we choose $g_{2}=g_{3}={\rm id}_{k}$ and $g_{0}=g_{1}=g_{s}$, $s\geq 4$. If $t\geq 5$, then by Lemma \ref{C det R} again we have $Y=B_{t-3, t}$. Thus we choose $g_{1}=g_{t-2}=g_{t-1}=g_{t}={\rm id}_{k}, g_{2}=g_{3}=\cdots=g_{t-3}=\left(\begin{array}{c}
0\\
1
\end{array}\right)$ and $g_{0}=g_{s}=0$, $s\geq t+1$. Since in both cases $(g\circ f)_{t}={\rm id}_{k}\neq 0$, it follows that $g\circ f$ is a non-zero morphism in ${\rm Hom}_{{\rm rep}(Q)}(P_{t}, Y)$. Observe that both functors $\iota$ and $\pi$ are faithful. As a consequence, the induced morphism $g_{\mathcal{C}}\circ f_{\mathcal{C}}: \xymatrix{P_{t}\ar[r]^{f_{\mathcal{C}}} &A_{t}^{(0)} \ar[r]^{g_{\mathcal{C}}} & Y}$ is non-zero in ${\rm Hom}_{\mathcal{C}(Q)}(P_{t}, Y)$. Statement $(2)$ is established.

In the proof of Statement $(3)$, we shall consider the case where $X=\tau_{\mathcal{C}}^{2}P_{t}$, $t\geq 3$, $t$ is odd and $U_{1}=A_{t+3}^{(0)}[-1]$. By Lemma \ref{C det R}, ${\rm dim}_{k}{\rm Hom}_{\mathcal{C}(Q)}(Y, \tau_{\mathcal{C}}^{2}P_{t})=1$. Thus in order to show that each morphism $Y\rightarrow \tau_{\mathcal{C}}^{2}P_{t}$ factors through $A_{t+3}^{(0)}[-1]$, it suffices to find two morphism in ${\rm Hom}_{\mathcal{C}(Q)}(Y, A_{t+3}^{(0)}[-1])$ and ${\rm Hom}_{\mathcal{C}(Q)}(A_{t+3}^{(0)}[-1], \tau_{\mathcal{C}}^{2}P_{t})$ such that the composition is non-zero in ${\rm Hom}_{\mathcal{C}(Q)}(Y, \tau_{\mathcal{C}}^{2}P_{t})$.

Define $g=(g_{i})_{i\geq 0}\in {\rm Hom}_{{\rm rep}(Q)}(A_{t+1}^{(1)}, I_{t})$, where $g_{t-1}=g_{t}=g_{t+1}={\rm id}_{k}$ and $g_{s}=0, s\neq t-1,t,t+1$. Next we define $f=(f_{i})_{i\geq 0}\in {\rm Hom}_{{\rm rep}(Q)}(Y, A_{t+1}^{(1)})$ as follows. If $t=3$, then by Lemma \ref{C det R} we have $Y=A_{3, 4}$. Thus we choose $f_{3}=f_{4}={\rm id}_{k}$ and $f_{s}=0, s\neq 3,4$. If $t\geq 5$, then by Lemma \ref{C det R} again we have $Y=B_{t-4, t+1}$. Thus we choose $f_{0}=f_{t-3}=f_{t-2}=f_{t-1}=f_{t}=f_{t+1}={\rm id}_{k}$, $f_{2}=f_{3}=\cdots=f_{t-4}=\left(\begin{array}{cc}
1 & 0
\end{array}\right)$ and $f_{1}=f_{s}=0, s\geq t+2$. Since in both cases $(g\circ f)_{t+1}={\rm id}_{k}\neq 0$, it follows that $g\circ f$ is a non-zero morphism in ${\rm Hom}_{{\rm rep}(Q)}(Y, I_{t})$ and hence $g_{D}\circ f_{D}$ is a non-zero morphism in ${\rm Hom}_{D^{b}({\rm rep}(Q))}(Y, I_{t})$.

Let $F=\tau_{D}^{-1}\circ [1]$. Then it is easy to see that $F(\tau_{\mathcal{C}}^{2}P_{t})=I_{t}$ and $F(A_{t+3}^{(0)}[-1])=A_{t+1}^{(1)}$. Finally we define $u\in {\rm Hom}_{\mathcal{C}(Q)}(Y, A_{t+3}^{(0)}[-1])$ as $u_{1}=f_{D}$, $u_{s}=0, s\neq 1$ and $v\in {\rm Hom}_{\mathcal{C}(Q)}(A_{t+3}^{(0)}[-1], \tau_{\mathcal{C}}^{2}P_{t})$ as $v_{0}=F^{-1}(g_{D})$, $v_{s}=0, s\neq 0$. Therefore, by definition, 
$$(v\circ u)_{1}=\bigoplus\limits_{p+q=1}(F^{p}(v_{q})\circ u_{p})=F(v_{0})\circ u_{1}=g_{D}\circ f_{D}\neq 0$$
and hence $\xymatrix{Y\ar[r]^{u}& A_{t+3}^{(0)}[-1]\ar[r]^{v} &\tau_{\mathcal{C}}^{2}P_{t}}$ is non-zero in ${\rm Hom}_{\mathcal{C}(Q)}(Y, \tau_{\mathcal{C}}^{2}P_{t})$. The proof of the lemma is completed.
\end{proof}

Thus, the following statement is an immediate consequence of Theorem \ref{in t} and Lemma \ref{f t}.

\begin{corollary}\label{PRok}
Let $\mathcal{T}$ be a weakly cluster-tilting subcategory of $\mathcal{C}(Q)$. Let $X, Y\in \mathcal{T}$ be two objects with $X$ lying in the connecting component $\mathcal{P}\sqcup \mathcal{I}[-1]$ and $Y$ lying in the regular component $R$. Then for any morphisms $f: X\rightarrow Y$ and $g: Y\rightarrow X$ in $\mathcal{C}(Q)$ with $f,g\neq 0$, there exists $Z\in \mathcal{T}, Z\ncong X,Y$ such that either $f$ or $g$ factors through $Z$ in $\mathcal{C}(Q)$.
\end{corollary}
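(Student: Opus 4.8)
The statement to prove is Corollary~\ref{PRok}, which follows by combining the structural information of Theorem~\ref{in t} with the factorization lemma, Lemma~\ref{f t}. First I would observe that since $X$ lies in the connecting component $\mathcal{P}\sqcup\mathcal{I}[-1]$ and $Y$ lies in a regular component $\mathcal{R}$, and since $f\colon X\to Y$ and $g\colon Y\to X$ are both non-zero in $\mathcal{C}(Q)$, we are exactly in the hypothesis situation of both Theorem~\ref{in t} and Lemma~\ref{f t}. In particular, by Lemma~\ref{C det R} we automatically get $t\geq 3$, where $X$ lies in the $\tau_{\mathcal{C}}$-orbit of $P_{t}$; so $X$ has two well-defined boundary predecessors $U_{1},U_{2}$ and two boundary successors $V_{1},V_{2}$ in the connecting component.

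Next, I would apply Theorem~\ref{in t}: since $\mathcal{T}$ is weakly cluster-tilting and $X,Y\in\mathcal{T}$ with $\mathrm{Hom}_{\mathcal{C}(Q)}(X,Y)\neq 0$ and $\mathrm{Hom}_{\mathcal{C}(Q)}(Y,X)\neq 0$, at least one object among $\{U_{1},U_{2},V_{1},V_{2}\}$ lies in $\mathcal{T}$. Call this object $Z$. Since $Z$ is a boundary predecessor or a boundary successor of $X$, it is distinct from $X$; and since $Z$ lies in the connecting component while $Y$ lies in a regular component, $Z\not\cong Y$. It remains to check that $f$ or $g$ factors through $Z$ in $\mathcal{C}(Q)$. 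If $Z=V_{i}$ is a boundary successor, then Lemma~\ref{f t}(2) says every morphism $X\to Y$ factors through $V_{i}$, so in particular $f$ does; if $Z=U_{i}$ is a boundary predecessor, then Lemma~\ref{f t}(3) says every morphism $Y\to X$ factors through $U_{i}$, so $g$ does. In either case the desired $Z\in\mathcal{T}$ with $Z\not\cong X,Y$ exists.

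There is a mild bookkeeping subtlety worth addressing explicitly: Lemma~\ref{f t} was stated and proved under a specific normalization ($X=P_{t}$ with $t$ odd, and a chosen boundary object such as $V_{1}=A_{t}^{(0)}$ or $U_{1}=A_{t+3}^{(0)}[-1]$), and similarly Theorem~\ref{in t}. To finish cleanly I would remark that all other cases — $t$ even, the other boundary predecessor/successor, and $X$ anywhere in its $\tau_{\mathcal{C}}$-orbit rather than exactly at $P_{t}$ — follow by the obvious symmetry: the Auslander–Reiten translate $\tau_{\mathcal{C}}$ is an autoequivalence of $\mathcal{C}(Q)$ carrying boundary objects to boundary objects, weakly cluster-tilting subcategories would need no translate here since $X$ is already given in $\mathcal{F}(Q)$, and the reflection of the $D_\infty$ diagram swapping the two boundary vertices exchanges the two boundary predecessors and the two boundary successors while preserving Hom-spaces. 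So the single normalized computation in Lemma~\ref{f t} suffices to cover whichever $Z\in\{U_1,U_2,V_1,V_2\}$ Theorem~\ref{in t} hands us.

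The proof as a whole is short — it is essentially just chaining the two previous results. I do not expect a genuine obstacle here, since the heavy lifting (the forbidden-region bound $H(V_1)\subseteq H(X)\cup H(Y)\cup\{U_1,U_2,V_1,V_2\}$ in Theorem~\ref{in t}, and the explicit construction of factoring morphisms in Lemma~\ref{f t}) has already been done. The only point requiring a little care in writing is to make sure the case split on ``$Z$ is a boundary predecessor'' versus ``$Z$ is a boundary successor'' is matched to the correct part (2) or (3) of Lemma~\ref{f t}, and that one invokes the symmetry remark so the reader does not worry about the parity normalization. Thus the corollary follows immediately.
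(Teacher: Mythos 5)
Your proposal is correct and matches the paper's intent exactly: the paper states this corollary as an immediate consequence of Theorem \ref{in t} (which supplies some $Z\in\{U_1,U_2,V_1,V_2\}$ lying in $\mathcal{T}$, necessarily distinct from $X$ and $Y$ since $t\geq 3$ and $Y$ is regular) and Lemma \ref{f t} (which gives the factorization of $f$ through the boundary successors and of $g$ through the boundary predecessors), with the hypothesis ${\rm Ext}^{1}_{\mathcal{C}(Q)}(X,Y)=0$ being automatic from $X,Y\in\mathcal{T}$ and the definition of weakly cluster-tilting. Your bookkeeping remark about the parity/normalization cases is consistent with how the cited results are proved, so nothing further is needed.
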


Furthermore, if both $X$ and $Y$ are lying in the connecting component $\mathcal{P}\sqcup \mathcal{I}[-1]$ such that ${\rm Ext}_{\mathcal{C}(Q)}^{1}(X, Y)=0$, then ${\rm Hom}_{\mathcal{C}(Q)}(X, Y)\neq 0$ and ${\rm Hom}_{\mathcal{C}(Q)}(Y, X)\neq 0$ implies that both $X$ and $Y$ are boundary objects.

\begin{lemma}\label{force bo}
Let $X, Y$ be two objects lying in the connecting component $\mathcal{P}\sqcup \mathcal{I}[-1]$. Assume that ${\rm Ext}_{\mathcal{C}(Q)}^{1}(X, Y)=0$. If ${\rm Hom}_{\mathcal{C}(Q)}(X, Y)\neq 0$ and ${\rm Hom}_{\mathcal{C}(Q)}(Y, X)\neq 0$, then both $X$ and $Y$ are boundary objects.
\end{lemma}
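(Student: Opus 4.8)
The plan is to rewrite the three hypotheses in $D^{b}({\rm rep}(Q))$ via Lemma~\ref{uf}$(2)$, to translate the resulting conditions into path conditions in the preprojective and preinjective components of $\Gamma_{{\rm rep}(Q)}$ via Lemma~\ref{PtoP}, and then to exploit the acyclicity of the connecting component.

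Since $\mathcal{C}(Q)$ is $2$-Calabi-Yau we have ${\rm Ext}^{1}_{\mathcal{C}(Q)}(X,Y)\cong D{\rm Ext}^{1}_{\mathcal{C}(Q)}(Y,X)$, so the statement is symmetric in $X$ and $Y$ and it suffices to prove that $X$ is a boundary object. As $\tau_{\mathcal{C}}$ is the shift functor of $\mathcal{C}(Q)$ and both ${\rm Hom}_{\mathcal{C}(Q)}$ and ${\rm Ext}^{1}_{\mathcal{C}(Q)}$ are unchanged when $\tau_{\mathcal{C}}$ is applied simultaneously to both arguments, we may assume $X=P_{t}$ for some $t\geq 0$, and (dealing with two genuinely different objects) that $Y\not\cong P_{t}$. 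Assume, towards a contradiction, that $t\geq 2$. Writing $D^{b}=D^{b}({\rm rep}(Q))$ and using that $\tau_{\mathcal{C}}Y\cong\tau_{D}Y$ in $\mathcal{C}(Q)$ (because $F=\tau_{D}^{-1}\circ[1]$ is identified with the identity there), Lemma~\ref{uf}$(2)$ turns the three hypotheses into
$${\rm Hom}_{D^{b}}(P_{t},Y)\oplus D{\rm Hom}_{D^{b}}(Y,\tau_{D}^{2}P_{t})\neq 0,\qquad {\rm Hom}_{D^{b}}(Y,P_{t})\oplus D{\rm Hom}_{D^{b}}(P_{t},\tau_{D}^{2}Y)\neq 0,$$
$${\rm Hom}_{D^{b}}(P_{t},\tau_{D}Y)=0\quad{\rm and}\quad {\rm Hom}_{D^{b}}(Y,\tau_{D}P_{t})=0,$$
where $\tau_{D}P_{t}=I_{t}[-1]$ and $\tau_{D}^{2}P_{t}=(\tau_{Q}I_{t})[-1]$.

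The next step is to eliminate the two ``twisted'' summands. If ${\rm Hom}_{D^{b}}(Y,\tau_{D}^{2}P_{t})\neq 0$, then since $\tau_{D}^{2}P_{t}$ is concentrated in degree $-1$ we must have $Y=W[-1]$ with $W\in\mathcal{I}$ and ${\rm Hom}_{{\rm rep}(Q)}(W,\tau_{Q}I_{t})\neq 0$, so $W$ is a predecessor of $\tau_{Q}I_{t}$, hence of $I_{t}$, in the preinjective component; by the dual of Lemma~\ref{PtoP}$(2)$ — and here $t\geq 2$ is used — this yields ${\rm Hom}_{{\rm rep}(Q)}(W,I_{t})\neq 0$, i.e. ${\rm Hom}_{D^{b}}(Y,\tau_{D}P_{t})\neq 0$, a contradiction. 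Dually, if ${\rm Hom}_{D^{b}}(P_{t},\tau_{D}^{2}Y)\neq 0$, then since $P_{t}$ is projective $\tau_{D}^{2}Y$ lies in $\mathcal{P}$ and is a successor of $P_{t}$, hence so is $\tau_{D}Y$, and Lemma~\ref{PtoP}$(2)$ gives ${\rm Hom}_{D^{b}}(P_{t},\tau_{D}Y)={\rm Hom}_{{\rm rep}(Q)}(P_{t},\tau_{D}Y)\neq 0$, again a contradiction. Hence ${\rm Hom}_{D^{b}}(P_{t},Y)\neq 0$ and ${\rm Hom}_{D^{b}}(Y,P_{t})\neq 0$. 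But $P_{t}$ and $Y$ lie in the connecting component of $\Gamma_{D^{b}}$, which is standard and acyclic of shape $\mathbb{Z}D_{\infty}$, so these two nonzero $\mathrm{Hom}$-spaces give paths $P_{t}\rightsquigarrow Y$ and $Y\rightsquigarrow P_{t}$, forcing $Y\cong P_{t}$ — contrary to $Y\not\cong P_{t}$. Therefore $t\leq 1$, i.e. $X$ is a boundary object, and by symmetry so is $Y$.

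The step I expect to require the most care is the elimination above: one must be certain that each passage ``$Z$ is a successor (resp. predecessor) $\Rightarrow$ the corresponding $\mathrm{Hom}$-space is nonzero'' is legitimate, which is exactly the role played by Lemma~\ref{PtoP}$(2)$ and its preinjective dual, and that it is precisely the assumption $t\geq 2$ that puts us in part $(2)$ rather than part $(1)$ of that lemma. For $t\in\{0,1\}$ the analogous step genuinely fails, because of the exceptional families $\{A^{(1-k)}_{l'}:l'>l\}$ in Lemma~\ref{PtoP}$(1)$ (equivalently the subtraction in Proposition~\ref{coincide}$(1)$); this is consistent with $P_{0},P_{1}$ being boundary and with the conclusion of the lemma. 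One should also verify the two pieces of degree bookkeeping used implicitly: that ${\rm Hom}_{D^{b}}(P_{t},Z)\neq 0$ forces $Z$ into degree $0$ since $P_{t}$ is projective, and that the connecting component of $\Gamma_{D^{b}}$ is indeed the acyclic translation quiver $\mathbb{Z}D_{\infty}$, so that a mutual pair of paths there collapses to an isomorphism.
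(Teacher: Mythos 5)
Your proposal is correct in substance, but it takes a genuinely different route from the paper's. The paper never leaves $\mathcal{C}(Q)$: from ${\rm Hom}_{\mathcal{C}(Q)}(X,Y)\neq 0$ and ${\rm Hom}_{\mathcal{C}(Q)}(Y,X)\neq 0$ it deduces $\tau_{\mathcal{C}}Y,\tau_{\mathcal{C}}^{-1}Y\in H(X)$, uses Proposition~\ref{coincide}(2) (this is where $t\geq 2$ enters) to place them in $H^{+}(X)\cup H^{-}(X)$, uses ${\rm Ext}^{1}_{\mathcal{C}(Q)}(Y,X)=0$ to exclude $Y$ itself from that set, and then a mesh-shifting argument (the connecting component has no projectives or injectives) turns the resulting non-sectional paths $\tau_{\mathcal{C}}Y\rightsquigarrow X$ and $X\rightsquigarrow\tau_{\mathcal{C}}^{-1}Y$ into paths $Y\rightsquigarrow X\rightsquigarrow Y$, contradicting acyclicity. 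You instead unpack the hypotheses in $D^{b}({\rm rep}(Q))$, eliminate the two ``twisted'' summands, and finish with directedness; this is tidier bookkeeping, but the price is visible exactly where you flagged it: killing ${\rm Hom}_{D^{b}}(Y,\tau_{D}^{2}P_{t})$ requires the preinjective dual of Lemma~\ref{PtoP}(2) (every preinjective predecessor $W$ of $I_{t}$, $t\geq 2$, satisfies ${\rm Hom}_{{\rm rep}(Q)}(W,I_{t})\neq 0$, equivalently $t\in {\rm supp}\,W$). That statement is true and can be extracted from the lists computed in the proof of Proposition~\ref{coincide}(2) or by dualizing to the opposite $D_{\infty}$ quiver, but it is nowhere stated or proved in the paper, whereas the paper's route deliberately only ever uses Hom-spaces \emph{out of} $P_{t}$, which Lemma~\ref{PtoP} and Proposition~\ref{CtoR} already supply; so your argument needs this extra (routine but nontrivial) verification to be complete. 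Two smaller points: Lemma~\ref{uf}(2) is stated only for representations, so its use with $Y\in\mathcal{I}[-1]$ should be justified by the direct orbit-category computation the paper carries out in the proofs of Proposition~\ref{coincide} and Lemma~\ref{C det R} (the two-term formula does hold on the fundamental domain); and your explicit reduction to $Y\ncong X$ is reasonable rather than a defect — for $X\cong Y$ non-boundary the hypotheses hold (indecomposables here are rigid) while the conclusion fails, so the lemma must be read for non-isomorphic objects, which is also what the paper's own proof tacitly assumes and what its only application (Theorem~\ref{Pisok}, a $2$-cycle between distinct vertices of $Q_{\mathcal{T}}$) requires.
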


\begin{proof}
Suppose to the contrary that $X$ is not a boundary object. Then $X$ lies in the $\tau_{\mathcal{C}}$-orbit of $P_{t}$, $t\geq 2$. Since ${\rm Hom}_{\mathcal{C}(Q)}(X, Y)\neq 0$ and ${\rm Hom}_{\mathcal{C}(Q)}(Y, X)\neq 0$, it follows that $D {\rm Ext}_{\mathcal{C}(Q)}^{1}(\tau_{\mathcal{C}}^{-1}Y, X)\neq 0$ and ${\rm Ext}_{\mathcal{C}(Q)}^{1}(\tau_{\mathcal{C}}Y, X)\neq 0$. Hence $\tau_{\mathcal{C}}^{-1}Y, \tau_{\mathcal{C}}Y\in H(X)\cap (\mathcal{P}\sqcup \mathcal{I}[-1])$.

Since $t\geq 2$, by Proposition \ref{coincide} it follows that both $\tau_{\mathcal{C}}^{-1}Y$ and $\tau_{\mathcal{C}}Y$ are in $H^{+}(X)\cup H^{-}(X)$. On the other hand, we have $D {\rm Ext}_{\mathcal{C}(Q)}^{1}(Y, X)=0$ and hence by Proposition \ref{coincide} again we have $Y\notin H^{+}(X)\cup H^{-}(X)$. If $\tau_{\mathcal{C}}Y\in H^{+}(X)$, then there exists paths $X\rightsquigarrow \tau_{\mathcal{C}}Y\rightsquigarrow Y$ in $\mathcal{P}\sqcup \mathcal{I}[-1]$ and thus $Y\in H^{+}(X)$, a contradiction. Thus $\tau_{\mathcal{C}}Y\notin H^{+}(X)$. Similarly we have $\tau_{\mathcal{C}}^{-1}Y\notin H^{-}(X)$.

Finally we have $\tau_{\mathcal{C}}Y\in H^{-}(X)$ and $\tau_{\mathcal{C}}^{-1}Y\in H^{+}(X)$. By definition there are two non-sectional paths $\tau_{\mathcal{C}}Y\rightsquigarrow X$ and $X\rightsquigarrow \tau_{\mathcal{C}}^{-1}Y$ in $\mathcal{P}\sqcup \mathcal{I}[-1]$. Since $\mathcal{P}\sqcup \mathcal{I}[-1]$ have neither projective object nor
injective object, it follows that we can find the paths $Y\rightsquigarrow X\rightsquigarrow Y$ in $\mathcal{P}\sqcup \mathcal{I}[-1]$, a contradiction. The proof of the lemma is completed.
\end{proof}

Finally we have the following result.

\begin{theorem}\label{Pisok}
Let $\mathcal{T}$ be a weakly cluster-tilting subcategory of $\mathcal{C}(Q)$. Let $X, Y\in \mathcal{T}$ be two objects lying in the connecting component $\mathcal{P}\sqcup \mathcal{I}[-1]$. Then for any morphisms $f: X\rightarrow Y$ and $g: Y\rightarrow X$ in $\mathcal{C}(Q)$ with $f,g\neq 0$, there exists $Z\in \mathcal{T}, Z\ncong X,Y$ such that either $f$ or $g$ factors through $Z$ in $\mathcal{C}(Q)$.
\end{theorem}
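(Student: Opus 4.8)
The plan is to reduce to a single explicit configuration, exhibit a factorization, and then invoke Lemma~\ref{for reg}. First, since $X,Y\in\mathcal T$ and $\mathcal T$ is weakly cluster-tilting we have ${\rm Ext}_{\mathcal C(Q)}^{1}(X,Y)=0$, so Lemma~\ref{force bo} shows that both $X$ and $Y$ are boundary objects; hence each of them lies in the $\tau_{\mathcal C}$-orbit of $P_{0}$ or of $P_{1}$. Now $\tau_{\mathcal C}$ is an autoequivalence of $\mathcal C(Q)$ that stabilises the connecting component $\mathcal P\sqcup\mathcal I[-1]$ and carries weakly cluster-tilting subcategories to weakly cluster-tilting subcategories, since ${\rm Ext}^{1}_{\mathcal C(Q)}(\tau_{\mathcal C}\mathcal T,-)\cong{\rm Ext}^{1}_{\mathcal C(Q)}(\mathcal T,\tau_{\mathcal C}^{-1}-)$; moreover the automorphism of $Q$ exchanging the vertices $0$ and $1$ induces an autoequivalence of $\mathcal C(Q)$ fixing $\mathcal P\sqcup\mathcal I[-1]$ and swapping $P_{0}$ and $P_{1}$. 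Replacing $(\mathcal T,X,Y,f,g)$ by a suitable $\tau_{\mathcal C}$-translate and, if necessary, exchanging $X$ with $Y$ (hence $f$ with $g$), I may therefore assume $X=P_{0}$ and $Y\in\mathcal P$.

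Next I would pin down $Y$. Using Lemma~\ref{uf} together with Lemma~\ref{PtoP}, one computes ${\rm Hom}_{\mathcal C(Q)}(P_{0},Y')$ and ${\rm Hom}_{\mathcal C(Q)}(Y',P_{0})$ for each boundary object $Y'\in\mathcal P$, just as in the proof of Lemma~\ref{C det R}; both of them are nonzero exactly when $Y'=A_{2n+1}^{(1)}$ with $n\geq2$, and in that case each is one-dimensional. Thus $Y=A_{2n+1}^{(1)}$ for some $n\geq2$, and $f$ is the unique nonzero morphism $P_{0}\to A_{2n+1}^{(1)}$ up to a nonzero scalar.

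Put $Z=A_{3}^{(1)}$; then $Z\ncong X=P_{0}$, and since $n\geq2$ also $Z\ncong Y=A_{2n+1}^{(1)}$. By Lemma~\ref{PtoP} the spaces ${\rm Hom}_{{\rm rep}(Q)}(A_{1}^{(1)},A_{3}^{(1)})$ and ${\rm Hom}_{{\rm rep}(Q)}(A_{3}^{(1)},A_{2n+1}^{(1)})$ are one-dimensional, and generators of them (which one reads off from the shapes of the representations $A_{1}^{(1)},A_{3}^{(1)},A_{2n+1}^{(1)}$, the maps being the identity at the vertex $0$) have nonzero composite in ${\rm Hom}_{{\rm rep}(Q)}(A_{1}^{(1)},A_{2n+1}^{(1)})$. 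As $\iota$ and $\pi$ are faithful, the induced morphisms in $\mathcal C(Q)$ factor $f$ through $Z$ up to a nonzero scalar, which is enough because ${\rm dim}_{k}{\rm Hom}_{\mathcal C(Q)}(P_{0},A_{2n+1}^{(1)})=1$; hence $f$ factors through $Z$ in $\mathcal C(Q)$.

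It remains to prove $Z\in\mathcal T$. If not, applying Lemma~\ref{for reg} with $X_{1}=P_{0}$, $X_{2}=A_{2n+1}^{(1)}\in\mathcal T$ and $Y_{1}=A_{3}^{(1)}\notin\mathcal T$ gives $H(A_{3}^{(1)})\nsubseteq H(P_{0})\cup H(A_{2n+1}^{(1)})\cup\{A_{3}^{(1)}\}$, so I would contradict this by establishing
$$H(A_{3}^{(1)})\ \subseteq\ H(P_{0})\ \cup\ H(A_{2n+1}^{(1)})\ \cup\ \{A_{3}^{(1)}\}.$$
For the part of these forbidden regions contained in $\mathcal P\sqcup\mathcal I[-1]$ this follows by comparing the explicit descriptions in Proposition~\ref{coincide}(1) of the forward and backward forbidden regions $H^{\pm}$ of the three boundary objects $P_{0}$, $A_{3}^{(1)}$, $A_{2n+1}^{(1)}$, together with the small corner corrections appearing there; for the part contained in a regular component one uses ${\rm Ext}_{\mathcal C(Q)}^{1}(W,-)\cong D\,{\rm Hom}_{{\rm rep}(Q)}(-,\tau_{Q}W)$ for $W\in\mathcal R$ (a consequence of Lemma~\ref{uf}) together with the infinite-wing description of Proposition~\ref{CtoR}. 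This gives $Z\in\mathcal T$ with $Z\ncong X,Y$, and since $f$ factors through $Z$ the proof is complete. The crux is this last inclusion: the corner corrections of Proposition~\ref{coincide}(1) and the regular-component contributions must be tracked with care, in the spirit of the proof of Theorem~\ref{in t}, although here the bookkeeping is considerably lighter, since all three objects involved are boundary objects and one only needs to absorb $H(A_{3}^{(1)})$ into the union of two forbidden regions plus one extra object.
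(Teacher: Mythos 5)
The first two thirds of your argument are fine: the reduction via $\tau_{\mathcal{C}}$ and the $0\leftrightarrow1$ symmetry to $X=P_{0}$, $Y\in\mathcal{P}$, the identification $Y=A_{2n+1}^{(1)}$ with $n\geq 2$ and both Hom-spaces one-dimensional, and the explicit factorization of $f$ through $A_{3}^{(1)}$ as a morphism in $\mathcal{C}(Q)$ are all correct. The gap is the last step, where you try to force $Z=A_{3}^{(1)}$ into $\mathcal{T}$ by proving $H(A_{3}^{(1)})\subseteq H(P_{0})\cup H(A_{2n+1}^{(1)})\cup\{A_{3}^{(1)}\}$. This inclusion is false. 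Take the regular object $A_{4,5}$: since $\tau_{Q}A_{4,5}=A_{2,3}$, Lemma \ref{uf} gives ${\rm Ext}_{\mathcal{C}(Q)}^{1}(A_{3}^{(1)},A_{4,5})\cong {\rm Hom}_{{\rm rep}(Q)}(A_{3}^{(1)},A_{2,3})\neq 0$, so $A_{4,5}\in H(A_{3}^{(1)})$ (indeed $H(A_{3}^{(1)})\cap\mathcal{R}=\mathcal{W}(A_{4,5})$); but ${\rm Ext}_{\mathcal{C}(Q)}^{1}(P_{0},A_{4,5})\cong {\rm Hom}_{{\rm rep}(Q)}(P_{0},A_{2,3})=0$ and ${\rm Ext}_{\mathcal{C}(Q)}^{1}(A_{2n+1}^{(1)},A_{4,5})\cong {\rm Hom}_{{\rm rep}(Q)}(A_{2n+1}^{(1)},A_{2,3})=0$ for $n\geq2$, so $A_{4,5}\notin H(P_{0})\cup H(A_{2n+1}^{(1)})$. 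Hence Lemma \ref{for reg} yields no contradiction. Worse, $\{P_{0},A_{2n+1}^{(1)},A_{4,5}\}$ is Ext-rigid, so a weakly cluster-tilting $\mathcal{T}$ containing $X$ and $Y$ may well contain $A_{4,5}$, and then it cannot contain $A_{3}^{(1)}$ at all; your conclusion ``$Z\in\mathcal{T}$'' is not just unproved but false in general.

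This is not a bookkeeping issue that lighter corner corrections can fix: any strategy that always factors $f$ through one fixed object is structurally at odds with the ``either $f$ or $g$'' form of the statement. The paper's proof proceeds by a dichotomy that you have no analogue of: it picks the unique non-boundary object $Z_{1}$ on sectional paths from $X$ to $Y$; if $Z_{1}\in\mathcal{T}$ it factors $f$ through $Z_{1}$ using standardness of the connecting component and the one-dimensionality of the Hom-space; if $Z_{1}\notin\mathcal{T}$ it takes a witness $Z_{2}\in\mathcal{T}$ with ${\rm Ext}_{\mathcal{C}(Q)}^{1}(Z_{2},Z_{1})\neq 0$, rules out $Z_{2}\in\mathcal{R}$ by the wing computations of Proposition \ref{CtoR} combined with the Ext-orthogonality of $Z_{2}$ to both $X$ and $Y$, locates $Z_{2}$ in explicit boundary orbits via Proposition \ref{coincide}, and then factors the \emph{other} morphism $g$ through $Z_{2}$ by an explicit construction in the orbit category. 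To repair your proof you would have to supply this second branch (or something equivalent) rather than strengthen the forbidden-region inclusion, which cannot hold.
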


\begin{proof}
By Lemma \ref{force bo} it follows that both $X$ and $Y$ are boundary objects in $\mathcal{P}\sqcup \mathcal{I}[-1]$. Since ${\rm Hom}_{\mathcal{C}(Q)}(X, Y)\neq 0$ and ${\rm Hom}_{\mathcal{C}(Q)}(Y, X)\neq 0$, it is easy to see that there exists a path between $X$ and $Y$ in $\mathcal{P}\sqcup \mathcal{I}[-1]$. We may assume that this path starting in $X$ and ending in $Y$. 

We shall consider only the case where $X=\tau_{\mathcal{C}}^{2}P_{0}=A_{4}^{(0)}[-1]$. If $Y\in \mathcal{I}[-1]$, then $\tau_{\mathcal{C}}^{-2}Y\in \mathcal{P}$ and by Lemma \ref{uf}, ${\rm Hom}_{\mathcal{C}(Q)}(Y, X)\cong {\rm Hom}_{\mathcal{C}(Q)}(\tau_{\mathcal{C}}^{-2}Y, P_{0})\cong {\rm Hom}_{{\rm rep}(Q)}(\tau_{\mathcal{C}}^{-2}Y, P_{0})\oplus D{\rm Hom}_{D^{b}({\rm rep}(Q))}(P_{0}, Y)=0$, a contradiction. Hence $Y=A_{l}^{(k)}\in \mathcal{P}$, $l\geq 1$, $l$ is odd and $k\in \{0,1\}$. Now by Lemma \ref{uf} again, 
$$\begin{array}{rcl}
{\rm Hom}_{\mathcal{C}(Q)}(X, Y) & \cong & {\rm Hom}_{\mathcal{C}(Q)}(P_{0}, A_{l+4}^{(k)}) \\
                                                       & \cong & {\rm Hom}_{{\rm rep}(Q)}(P_{0}, A_{l+4}^{(k)})\oplus D{\rm Hom}_{D^{b}({\rm rep}(Q))}(A_{l+4}^{(k)}, \tau_{D}^{2}P_{0}) \\
                                                       & = & {\rm Hom}_{{\rm rep}(Q)}(P_{0}, A_{l+4}^{(k)})\neq 0
\end{array}$$
Thus, $k=1$ and ${\rm Hom}_{\mathcal{C}(Q)}(\tau_{\mathcal{C}}^{2}P_{0}, A_{l}^{(1)})\cong k$.

It is easy to see that there exists a unique non-boundary object $Z_{1}$ in $\mathcal{P}\sqcup \mathcal{I}[-1]$ such that $Z_{1}$ is a sectional successor of $\tau_{\mathcal{C}}^{2}P_{0}$ and a sectional predecessor of $A_{l}^{(1)}$, that is, there exists sectional paths $p: \tau_{\mathcal{C}}^{2}P_{0}\rightsquigarrow Z_{1}$ and $q: Z_{1}\rightsquigarrow A_{l}^{(1)}$ in $\mathcal{P}\sqcup \mathcal{I}[-1]$. Thus, $Z_{1}\ncong \tau_{\mathcal{C}}^{2}P_{0}, A_{l}^{(1)}$. Suppose first that $Z_{1}\in \mathcal{T}$. Since the connecting component $\mathcal{P}\sqcup \mathcal{I}[-1]$ of $\Gamma_{D^{b}({\rm rep}(Q))}$ is standard; see [\ref{LP1}], by definition there exists a $k$-equivalence 
$$G: k(\mathcal{P}\sqcup \mathcal{I}[-1])\rightarrow D^{b}({\rm rep}(Q))(\mathcal{P}\sqcup \mathcal{I}[-1]),$$ 
where $k(\mathcal{P}\sqcup \mathcal{I}[-1])$ is the mesh category and $D^{b}({\rm rep}(Q))(\mathcal{P}\sqcup \mathcal{I}[-1])$ is the full subcategory of $D^{b}({\rm rep}(Q))$ generated by the objects lying in $\mathcal{P}\sqcup \mathcal{I}[-1]$. It is not hard to see that 
$$qp\cong A_{4}^{(0)}[-1]\rightarrow B_{2,4}[-1]\rightarrow A_{2}^{(0)}[-1]\rightarrow A_{2,2}[-1]\rightarrow A_{1}^{(1)}\rightarrow B_{1,3}\rightarrow A_{3}^{(1)}\rightarrow \cdots\rightarrow A_{l}^{(1)}$$
is non-zero in $k(\mathcal{P}\sqcup \mathcal{I}[-1])$ and hence $G(qp)=G(q)\circ G(p)$ is non-zero in ${\rm Hom}_{D^{b}({\rm rep}(Q))}(\tau_{\mathcal{C}}^{2}P_{0}, A_{l}^{(1)})$. Since the canonical projection functor $\pi: D^{b}({\rm rep}(Q))\rightarrow \mathcal{C}(Q)$ is faithful, it follows that the composition $\xymatrix{\tau_{\mathcal{C}}^{2}P_{0}\ar[r]^{\pi(G(p))} & Z_{1}\ar[r]^{\pi(G(q))} & A_{l}^{(1)}}$ is non-zero in ${\rm Hom}_{\mathcal{C}(Q)}(\tau_{\mathcal{C}}^{2}P_{0}, A_{l}^{(1)})$. As a consequence, any morphism $f: \tau_{\mathcal{C}}^{2}P_{0}\rightarrow A_{l}^{(1)}$ factors through $Z_{1}$ in $\mathcal{C}(Q)$ because ${\rm Hom}_{\mathcal{C}(Q)}(\tau_{\mathcal{C}}^{2}P_{0}, A_{l}^{(1)})\cong k$. Our claim is established.

Suppose now that $Z_{1}\notin \mathcal{T}$. Then by definition there exists an indecomposable object $Z_{2}\in \mathcal{T}$ such that ${\rm Ext}_{\mathcal{C}(Q)}^{1}(Z_{2}, Z_{1})\neq 0$. If $Z_{2}\in \mathcal{R}$, then by Lemma \ref{uf} we have ${\rm Ext}_{\mathcal{C}(Q)}^{1}(Z_{2}, Z_{1})\cong D{\rm Hom}_{\mathcal{C}(Q)}(\tau_{\mathcal{C}}^{-1}Z_{1}, Z_{2})\cong D{\rm Hom}_{{\rm rep}(Q)}(\tau_{\mathcal{C}}^{-1}Z_{1}, Z_{2})\neq 0$. It is not hard to see that $\tau_{\mathcal{C}}^{-1}Z_{1}=A_{3,l+2}$ and hence by Proposition \ref{CtoR} we have $Z_{2}\in \mathcal{W}(A_{3,4})\cup \mathcal{W}(A_{l+1,l+2})$. On the other hand, since all three $\tau_{\mathcal{C}}^{2}P_{0}, A_{l}^{(1)}$ and $Z_{2}$ are in $\mathcal{T}$ and thus by Lemma \ref{uf} again we have 
$${\rm Ext}_{\mathcal{C}(Q)}^{1}(\tau_{\mathcal{C}}^{2}P_{0}, Z_{2})\cong {\rm Hom}_{\mathcal{C}(Q)}(P_{0}, \tau_{\mathcal{C}}^{-1}Z_{2})\cong {\rm Hom}_{{\rm rep}(Q)}(P_{0}, \tau_{Q}^{-1}Z_{2})=0,$$
$${\rm Ext}_{\mathcal{C}(Q)}^{1}(A_{l}^{(1)}, Z_{2})\cong {\rm Hom}_{\mathcal{C}(Q)}(A_{l}^{(1)}, \tau_{\mathcal{C}}Z_{2})\cong {\rm Hom}_{{\rm rep}(Q)}(P_{k'}, \tau_{Q}^{\frac{l+1}{2}}Z_{2})=0, k'\in \{0,1\}.$$
Applying Proposition \ref{CtoR} again the first equation implies that $Z_{2}\notin \mathcal{W}(A_{3,4})$ and the second equation implies that $Z_{2}\notin \mathcal{W}(A_{l+1,l+2})$, a contradiction.

Therefore, $Z_{2}\in H(Z_{1})\cap(\mathcal{P}\sqcup \mathcal{I}[-1])$. Since $Z_{1}$ is not a boundary object, by Proposition \ref{coincide} we obtain $Z_{2}\in H^{+}(Z_{1})\cup H^{-}(Z_{1})$. On the other hand, $Z_{2}\notin H(\tau_{\mathcal{C}}^{2}P_{0})$ and $Z_{2}\notin H(A_{l}^{(1)})$. Now by Proposition \ref{coincide} again it is not hard to see that $Z_{2}\in \{A_{l+2}^{(1)}, A_{l+4}^{(1)}, \cdots\}\cup \{A_{6}^{(0)}[-1], A_{8}^{(0)}[-1],\cdots\}$. In particular, $Z_{2}\ncong \tau_{\mathcal{C}}^{2}P_{0}, A_{l}^{(1)}$.

In the first case, let $Z_{2}=A_{l+j}^{(1)}, j>0, j~{\rm even}$. Then we define $f=(f_{i})_{i\geq 0}\in {\rm Hom}_{{\rm rep}(Q)}(A_{l}^{(1)}, A_{l+j}^{(1)})$ as $f_{0}=f_{2}=f_{3}=\cdots =f_{l}={\rm id}_{k}$ and $f_{1}=f_{s}=0, s\geq l+1$. Furthermore, we define $g=(g_{i})_{i\geq 0}\in {\rm Hom}_{{\rm rep}(Q)}(A_{l+j}^{(1)}, I_{0})$ as $g_{0}=g_{2}={\rm id}_{k}$ and $g_{1}=g_{s}=0, s\geq 3$. Since $(g\circ f)_{0}={\rm id}_{k}\neq 0$, it follows that $g\circ f\neq 0$ and hence $(g\circ f)_{D}=g_{D}\circ f_{D}$ is non-zero in ${\rm Hom}_{D^{b}({\rm rep}(Q))}(A_{l}^{(1)}, I_{0})$.

Let $F=\tau_{D}^{-1}\circ [1]$. Then it is easy to see that $F(\tau_{\mathcal{C}}^{2}P_{0})=I_{0}$. Now we define $u\in {\rm Hom}_{\mathcal{C}(Q)}(A_{l}^{(1)}, A_{l+j}^{(1)})$ as $u_{0}=f_{D}, u_{s}=0, s\neq 0$ and $v\in {\rm Hom}_{\mathcal{C}(Q)}(A_{l+j}^{(1)}, \tau_{\mathcal{C}}^{2}P_{0})$ as $v_{1}=g_{D}, v_{s}=0, s\neq 1$. Therefore, by definition, 
$$(v\circ u)_{1}=\bigoplus\limits_{p+q=1}(F^{p}(v_{q})\circ u_{p})=v_{1}\circ u_{0}=g_{D}\circ f_{D}\neq 0$$
and hence $\xymatrix{A_{l}^{(1)}\ar[r]^{u} & A_{l+j}^{(1)}\ar[r]^{v} & \tau_{\mathcal{C}}^{2}P_{0}}$ is non-zero in ${\rm Hom}_{\mathcal{C}(Q)}(A_{l}^{(1)}, \tau_{\mathcal{C}}^{2}P_{0})$. Finally by Lemma \ref{uf} we have 
$${\rm Hom}_{\mathcal{C}(Q)}(A_{l}^{(1)}, \tau_{\mathcal{C}}^{2}P_{0}) \cong {\rm Hom}_{\mathcal{C}(Q)}(A_{l}^{(1)}, I_{0}) \cong {\rm Hom}_{{\rm rep}(Q)}(A_{l}^{(1)}, I_{0}) \cong k.$$
Thus any morphism $A_{l}^{(1)}\rightarrow \tau_{\mathcal{C}}^{2}P_{0}$ factors through $A_{l+j}^{(1)}$ in $\mathcal{C}(Q)$. This establishes our claim.

In the second case, let $Z_{2}=A_{4+j}^{(0)}[-1], j>0, j~{\rm even}$. Then we define $f=(f_{i})_{i\geq 0}\in {\rm Hom}_{{\rm rep}(Q)}(A_{l}^{(1)}, \\A_{2+j}^{(1)})$ as $f_{0}=f_{2}=f_{3}=\cdots=f_{{\rm min}(l, 2+j)}={\rm id}_{k}$ and $f_{1}=f_{s}=0, s\geq {\rm min}(l, 2+j)+1$. Furthermore, we define $g=(g_{i})_{i\geq 0}\in {\rm Hom}_{{\rm rep}(Q)}(A_{2+j}^{(1)}, I_{0})$ as $g_{0}=g_{2}={\rm id}_{k}$ and $g_{1}=g_{s}=0, s\geq 3$. Since $(g\circ f)_{0}={\rm id}_{k}\neq 0$, it follows that $g\circ f\neq 0$ and hence $(g\circ f)_{D}=g_{D}\circ f_{D}$ is non-zero in ${\rm Hom}_{D^{b}({\rm rep}(Q))}(A_{l}^{(1)}, I_{0})$.

It is easy to see that $F(A_{4+j}^{(0)}[-1])=A_{2+j}^{(1)}$. Now we define $u\in {\rm Hom}_{\mathcal{C}(Q)}(A_{l}^{(1)}, A_{4+j}^{(0)}[-1])$ as $u_{1}=f_{D}, u_{s}=0, s\neq 1$ and $v\in {\rm Hom}_{\mathcal{C}(Q)}(A_{4+j}^{(0)}[-1], \tau_{\mathcal{C}}^{2}P_{0})$ as $v_{0}=F^{-1}(g_{D}), v_{s}=0, s\neq 0$. Therefore, by definition, 
$$(v\circ u)_{1}=\bigoplus\limits_{p+q=1}(F^{p}(v_{q})\circ u_{p})=F(v_{0})\circ u_{1}=g_{D}\circ f_{D}\neq 0$$
and hence $\xymatrix{A_{l}^{(1)}\ar[r]^{u} & A_{4+j}^{(0)}[-1]\ar[r]^{v} & \tau_{\mathcal{C}}^{2}P_{0}}$ is non-zero in ${\rm Hom}_{\mathcal{C}(Q)}(A_{l}^{(1)}, \tau_{\mathcal{C}}^{2}P_{0})$. Since ${\rm Hom}_{\mathcal{C}(Q)}(A_{l}^{(1)}, \\\tau_{\mathcal{C}}^{2}P_{0})\cong k$, it follows that any morphism $A_{l}^{(1)}\rightarrow \tau_{\mathcal{C}}^{2}P_{0}$ factors through $A_{4+j}^{(0)}[-1]$ in $\mathcal{C}(Q)$. The proof of the theorem is completed.
\end{proof}

We are ready to obtain our main result of this paper.

\begin{theorem}\label{cluster st}
The category $\mathcal{C}(Q)$ is a cluster category.
\end{theorem}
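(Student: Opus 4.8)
The plan is to let $\mathfrak{C}$ be the collection of \emph{all} cluster-tilting subcategories of $\mathcal{C}(Q)$ and to check that it satisfies the four conditions of Definition \ref{cluster structure}. By Remark \ref{CQ}(1), $\mathcal{C}(Q)$ is a Hom-finite Krull--Schmidt $2$-Calabi--Yau triangulated $k$-category, and $\mathfrak{C}$ is non-empty. Conditions $(2)$ and $(4)$ — the existence of a unique indecomposable exchange object $M^{*}$, the fact that $\mu_{M}(\mathcal{T})$ is again cluster-tilting, and the two exchange triangles — hold for cluster-tilting subcategories of an arbitrary $2$-Calabi--Yau triangulated category by the mutation theory of [\ref{BIRS}]. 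Moreover, once condition $(1)$ is established for every member of $\mathfrak{C}$, condition $(3)$ (that the quiver of $\mu_{M}(\mathcal{T})$ is the Fomin--Zelevinsky mutation at $M$ of the quiver of $\mathcal{T}$) also follows from [\ref{BIRS}], since $\mu_{M}(\mathcal{T})\in\mathfrak{C}$. Hence the whole statement reduces to condition $(1)$: for every cluster-tilting subcategory $\mathcal{T}$ of $\mathcal{C}(Q)$ and every indecomposable $M\in\mathcal{T}$, the quiver $Q_{\mathcal{T}}$ has neither a loop nor a $2$-cycle at $M$.

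The absence of loops is immediate: no cluster-tilting subcategory of a $2$-Calabi--Yau triangulated category has a loop in its quiver; see [\ref{BIRS}]. The crux is the absence of $2$-cycles, and this is precisely what Lemma \ref{Rok}, Corollary \ref{PRok} and Theorem \ref{Pisok} are tailored to exclude. Suppose, towards a contradiction, that $Q_{\mathcal{T}}$ has a $2$-cycle involving $M$ and some indecomposable $N\in\mathcal{T}$ with $N\ncong M$. Then there are non-zero morphisms $f: M\rightarrow N$ and $g: N\rightarrow M$ in $\mathcal{T}$ that are irreducible in $\mathcal{T}$; in particular $f$ and $g$ are non-isomorphisms which do not factor through any indecomposable object of $\mathcal{T}$ non-isomorphic to both $M$ and $N$. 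Since $N\in\mathcal{T}$ and $\mathcal{T}$ is weakly cluster-tilting, ${\rm Ext}_{\mathcal{C}(Q)}^{1}(M,N)=0$ by Definition \ref{basic definition}$(2)$. By Remark \ref{CQ}$(2)$ we may assume $M,N\in\mathcal{F}(Q)=\mathcal{P}\sqcup\mathcal{I}[-1]\sqcup\mathcal{R}$, and we distinguish three cases according to the components containing $M$ and $N$.

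If both $M$ and $N$ lie in the regular component $\mathcal{R}$, then Lemma \ref{Rok} forces ${\rm Hom}_{\mathcal{C}(Q)}(M,N)=0$ or ${\rm Hom}_{\mathcal{C}(Q)}(N,M)=0$, contradicting $f,g\neq 0$. If exactly one of $M,N$ lies in the connecting component $\mathcal{P}\sqcup\mathcal{I}[-1]$ and the other in $\mathcal{R}$, then Corollary \ref{PRok}, applied with the connecting-component object in the role of $X$ (both $f$ and $g$ being non-zero), yields an object $Z\in\mathcal{T}$ with $Z\ncong M,N$ through which $f$ or $g$ factors; inspecting the proof, $Z$ may be taken indecomposable (it is one of the boundary predecessors or successors of $X$ produced in Lemma \ref{f t}), which contradicts irreducibility of $f$ or $g$. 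If both $M$ and $N$ lie in the connecting component, Theorem \ref{Pisok} gives the same factorisation through an indecomposable $Z\in\mathcal{T}$, $Z\ncong M,N$, hence the same contradiction. Therefore $Q_{\mathcal{T}}$ has no $2$-cycle, condition $(1)$ holds throughout $\mathfrak{C}$, and $\mathcal{C}(Q)$ is a cluster category.

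The main obstacle is plainly the $2$-cycle case, and within it the situation where $M$ lies in the connecting component and $N$ in the regular one; ruling this out is exactly the content of Theorem \ref{in t} and Lemma \ref{f t}, which in turn rest on the Hom-space computations of Lemma \ref{PtoP} and Proposition \ref{CtoR} together with the forbidden-region analysis of Proposition \ref{coincide}. Everything else in the verification of the cluster structure — conditions $(2)$, $(3)$, $(4)$ and the absence of loops — is formal, coming from [\ref{BIRS}], so the proof of the theorem itself is a short case-check assembled from Lemma \ref{Rok}, Corollary \ref{PRok} and Theorem \ref{Pisok}.
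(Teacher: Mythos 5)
Your overall strategy coincides with the paper's: reduce, via [\ref{BIRS}, (I.1.6)], to showing that the quiver of every cluster-tilting subcategory of $\mathcal{C}(Q)$ has no loops and no $2$-cycles, and then exclude $2$-cycles by the three-case analysis (both objects regular, one in the connecting component and one regular, both in the connecting component) using Lemma \ref{Rok}, Corollary \ref{PRok} and Theorem \ref{Pisok}. That part of your argument, including the observation that the factoring object $Z$ can be taken indecomposable, is exactly what the paper does.

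There is, however, a genuine gap in your treatment of loops. You claim that no cluster-tilting subcategory of an arbitrary $2$-Calabi--Yau triangulated category has a loop in its quiver, citing [\ref{BIRS}]; this is not a theorem of [\ref{BIRS}] and is false in general: the absence of loops (and of $2$-cycles) is a \emph{hypothesis} of [\ref{BIRS}, (I.1.6)], not a consequence of the $2$-Calabi--Yau property, and there are well-known $2$-Calabi--Yau categories (for instance Cohen--Macaulay modules over one-dimensional hypersurface singularities, in work of Burban, Iyama, Keller and Reiten) whose cluster-tilting objects do have loops and $2$-cycles in their quivers. So the no-loop condition must be verified for $\mathcal{C}(Q)$ itself; the paper does this by invoking [\ref{LP2}, (4.6)], which gives ${\rm End}_{\mathcal{C}(Q)}(X)\cong k$ for every indecomposable $X$ (using that $Q$ is infinite Dynkin without infinite paths), so that every non-zero endomorphism is an isomorphism and hence not irreducible. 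A second, smaller omission: your collection $\mathfrak{C}$ must be non-empty, i.e.\ $\mathcal{C}(Q)$ must actually possess a cluster-tilting subcategory; you assert this without justification, whereas the paper obtains it from [\ref{LP2}, (4.4)], again using that $Q$ is locally finite with no infinite paths. With these two points repaired, your argument becomes the paper's proof.
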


\begin{proof}
By Remark \ref{CQ}, $\mathcal{C}(Q)$ is a Hom-finite Krull-Schmidt $2$-Calabi-Yau triangulated $k$-category. Since $Q$ is a locally finite quiver with no infinite path, according to [\ref{LP2}, (4.4)] $\mathcal{C}(Q)$ has a cluster-tilting subcategory and hence we need only to show that the quiver $Q_{\mathcal{T}}$ of every cluster-tilting subcategory $\mathcal{T}$ of $\mathcal{C}(Q)$ has no oriented cycle of length one or two; see [\ref{BIRS}, (I.1.6)].

Let $X\in \mathcal{T}$. Since $Q$ is an infinite Dynkin quiver with no infinite path, according to [\ref{LP2}, (4.6)] ${\rm End}_{\mathcal{C}(Q)}(X)\cong k$ and hence each non-zero morphism $X\rightarrow X$ is an isomorphism and so not irreducible. Therefore, there are no oriented cycle of length one in $Q_{\mathcal{T}}$.

Let $X, Y\in \mathcal{T}$. Suppose to the contrary that there is an oriented cycle $\xymatrix{X\ar[r]^{f} & Y\ar[r]^{g} & X}$ in $Q_{\mathcal{T}}$. In particular, ${\rm Hom}_{\mathcal{C}(Q)}(X, Y)\neq 0$ and ${\rm Hom}_{\mathcal{C}(Q)}(Y, X)\neq 0$. Thus by Lemma \ref{Rok} it follows that at least one of $X$ and $Y$ is not regular. Suppose first that $X$ is lying in the connecting component $\mathcal{P}\sqcup \mathcal{I}[-1]$ and $Y$ is lying in the regular component $\mathcal{R}$. Then by Corollary \ref{PRok} we obtain that there exists $Z\in \mathcal{T}, Z\ncong X,Y$ such that either $f$ or $g$ factors through $Z$ in $\mathcal{C}(Q)$. Since $\mathcal{T}$ is a full subcategory of $\mathcal{C}(Q)$, it follows that either $f$ or $g$ is not irreducible, a contradiction. Suppose now that both $X$ and $Y$ are lying in the connecting component $\mathcal{P}\sqcup \mathcal{I}[-1]$. Then by Theorem \ref{Pisok} we can obtain a similar contradiction. Therefore, there are no oriented cycle of length two in $Q_{\mathcal{T}}$. The proof of the theorem is completed.
\end{proof}

\begin{remark}\label{ar D}
Let $Q'$ be any connected infinite Dynkin quiver of type $D_{\infty}$ with no infinite path. Then Reiten and Van den Bergh construct a category $\widetilde{{\rm rep}}(Q')$ in {\rm [\ref{RV}]}. Since $Q$ is a section in $\mathbb{Z}Q'$, they show that $\widetilde{{\rm rep}}(Q')\cong {\rm rep}(Q')$ and the categories $\widetilde{{\rm rep}}(Q)$ and $\widetilde{{\rm rep}}(Q')$ are derived equivalent. As a consequence, the categories ${\rm rep}(Q)$ and ${\rm rep}(Q')$ are also derived equivalent. Therefore, since the category $\mathcal{C}(Q)$ is a cluster category, the category $\mathcal{C}(Q')$ is also a cluster category for any connected infinite Dynkin quiver $Q'$ of type $D_{\infty}$ with no infinite path.
\end{remark}

\bigskip

{\bf Acknowledgements.}~~ This work was carried out when the author is a postdoctoral fellow at Universit\'{e} de Sherbrooke. The author thanks Prof. Shiping Liu for his helpful discussions and warm hospitality. He also wants to thank people in Universit\'{e} de Sherbrooke for their help.

\end{document}